\newcommand{\email}[1]{\href{mailto:#1}{\texttt{#1}}}
\definecolor{customblue}{RGB}{235,241,245}
\lstdefinestyle{mystyle}{%
  backgroundcolor=\color{customblue},
  frame=single,
  framerule=0pt,
  basicstyle=\ttfamily\small,
  breaklines=true,
  columns=fullflexible,
  language=c++,
}
\newtheorem{theorem}{Theorem}
\newtheorem{lemma}[theorem]{Lemma}
\newtheorem{remark}[theorem]{Remark}
\newtheorem{example}[theorem]{Example}
\newcommand{\satop}[2]{\stackrel{\scriptstyle{#1}}{\scriptstyle{#2}}}
\newcommand{\bsa}{{\boldsymbol{a}}}
\newcommand{\bsk}{{\boldsymbol{k}}}
\newcommand{\bsb}{{\boldsymbol{b}}}
\newcommand{\bsf}{{\boldsymbol{f}}}
\newcommand{\bsF}{{\boldsymbol{F}}}
\newcommand{\bsx}{{\boldsymbol{x}}}
\newcommand{\bsh}{{\boldsymbol{h}}}
\newcommand{\bsg}{{\boldsymbol{g}}}
\newcommand{\bst}{{\boldsymbol{t}}}
\newcommand{\bsz}{{\boldsymbol{z}}}
\newcommand{\bsone}{{\boldsymbol{1}}}
\newcommand{\bseps}{{\boldsymbol{\varepsilon}}}
\newcommand{\eps}{{\varepsilon}}
\newcommand{\bssigma}{\boldsymbol{\sigma}}
\newcommand{\bszero}{\boldsymbol{0}}
\newcommand{\rd}{\mathrm{d}}
\newcommand{\NN}{\mathbb{N}}
\newcommand{\ZZ}{\mathbb{Z}}
\newcommand{\RR}{\mathbb{R}}
\newcommand{\CC}{\mathbb{C}}
\newcommand{\calA}{{\mathcal{A}}}
\newcommand{\calO}{{\mathcal{O}}}
\newcommand{\calF}{{\mathcal{F}}}
\newcommand{\calM}{{\mathcal{M}}}
\newcommand{\calS}{{\mathcal{S}}}
\newcommand{\bbZ}{{\mathbb{Z}}}
\newcommand{\bbN}{{\mathbb{N}}}
\newcommand{\bbR}{{\mathbb{R}}}
\newcommand{\coefpol}{b}
\newcommand{\size}[1]{\#{#1}}
\newcommand{\sizep}[1]{\#(#1)} 
\newcommand{\cheb}{\eta}
\newcommand{\cosb}{\phi}
\newcommand{\tent}{\varphi_{\rm tent}}
\newcommand{\nozero}{\!\setminus\!\{\bszero\}}
\DeclareMathOperator{\Span}{span} 
\newcommand{\rmd}{{\mathrm{d}}}
\newcommand{\rmi}{{\mathrm{i}}}
\newcommand{\rmu}{{\mathrm{u}}}
\newcommand{\rmv}{{\mathrm{v}}}
\newcommand*\bigcdot{\mathpalette\bigcdot@{.5}}
\newcommand*\bigcdot@[2]{\mathbin{\vcenter{\hbox{\scalebox{#2}{$\m@th#1\bullet$}}}}}
\definecolor{darkred}{RGB}{139,0,0}
\definecolor{darkgreen}{RGB}{30,130,80}
\definecolor{darkmagenta}{RGB}{139,0,139}
\definecolor{darkorange}{RGB}{180,60,0}
\begin{document}

\title{Function integration, reconstruction and approximation\\ using rank-$1$ lattices}
\author{Frances Y. Kuo \and Giovanni Migliorati \and Fabio Nobile \and Dirk Nuyens%
\footnote{Addresses:
    Frances Y.\ Kuo (\email{f.kuo@unsw.edu.au}), UNSW Sydney, Australia;
    Giovanni Migliorati (\email{migliorati@ljll.math.upmc.fr}), Sorbonne University, France;
    Fabio Nobile (\email{fabio.nobile@epfl.ch}), EPFL, Switzerland;
    Dirk Nuyens (\email{dirk.nuyens@cs.kuleuven.be}, corresponding author), KU Leuven, Belgium.
}}

\date{}

\maketitle
\begin{abstract}
We consider rank-$1$ lattices for integration and reconstruction of
functions with series expansion supported on a finite index set. We
explore the connection between the periodic Fourier space and the
non-periodic cosine space and Chebyshev space, via tent transform and then
cosine transform, to transfer known results from the periodic setting into
new insights for the non-periodic settings. Fast discrete cosine transform
can be applied for the reconstruction phase. To reduce the size of the
auxiliary index set in the associated component-by-component (CBC)
construction for the lattice generating vectors, we work with a
bi-orthonormal set of basis functions, leading to three methods for
function reconstruction in the non-periodic settings. We provide new
theory and efficient algorithmic strategies for the CBC construction. We
also interpret our results in the context of general function
approximation and discrete least-squares approximation.

\bigskip

\noindent
\textbf{Keywords:}
  Exact integration and approximation on finite index sets,
  Quasi-Monte Carlo methods,
  Rank-$1$ lattice points,
  Fourier space, Cosine space, Chebyshev space,
  Component-by-component construction.
\smallskip

\noindent
\textbf{AMS Subject classifications:}
  41A10 (Approximation by polynomials),
  42A10 (Trigonometric approximation),
  41A63 (Multidimensional problems),
  42B05 (Fourier series and coefficients),
  65D30 (Numerical integration),
  65D32 (Quadrature and cubature formulas),
  65D15 (Algorithms for functional approximation).
\end{abstract}

\section{Introduction}

In this paper we consider function integration, reconstruction and
approximation in the periodic and non-periodic settings using rank-$1$
lattices. We explore the connection between three function space settings
to transfer known results on rank-$1$ lattices from the periodic setting
to the non-periodic settings. We obtain necessary and sufficient
conditions on rank-$1$ lattices to achieve the exactness properties we
require in each setting, and we develop efficient algorithms to construct
the generating vectors for rank-$1$ lattices that satisfy these
conditions.

More precisely, we consider functions with absolutely convergent series
expansions with respect to an orthonormal basis, written in the generic
form
\begin{equation} \label{eq:f-gen}
  f \,=\, \sum_{\bsk} \widehat{f}_\bsk \,\alpha_\bsk.
\end{equation}
A large part of this paper is devoted to functions which are fully
supported on a finite index set~$\Lambda$, i.e.,
\begin{equation} \label{eq:fin-gen}
  f \,=\, \sum_{\bsk\in\Lambda} \widehat{f}_\bsk \,\alpha_\bsk.
\end{equation}
We develop methods based on rank-$1$ lattices to exactly integrate such
functions \eqref{eq:fin-gen}, and to exactly reconstruct all series
coefficients $\widehat{f}_\bsk$ in \eqref{eq:fin-gen}. We also consider
the approximation problem for functions \eqref{eq:f-gen} which are not
finitely supported on $\Lambda$.

The three function space settings we consider are as follows:
\begin{itemize}
\item The \textbf{Fourier space} 
contains all
    absolutely convergent Fourier series in the unit cube $[0,1]^d$,
    with exponential basis functions
    $e_\bsh(\bsx)=e^{2\pi\rmi\,\bsh\cdot\bsx}$ and indices
    $\bsh\in\bbZ^d$.

\item The \textbf{cosine space} 
contains all
    absolutely convergent cosine series in $[0,1]^d$, with half-period
    cosine basis functions $\cosb_\bsk$ (see \eqref{eq:cosb} below)
    and nonnegative indices $\bsk\in\bbN_0^d$.

\item The \textbf{Chebyshev space} 
consists of all
    absolutely convergent Chebyshev series in the larger domain
    $[-1,1]^d$, under the Chebyshev measure, with Chebyshev basis
    functions $\cheb_\bsk$ (see \eqref{eq:cheb} below) and also
    nonnegative indices $\bsk\in\bbN_0^d$.
\end{itemize}
To avoid excessive notation we keep to generic notation for the three
spaces wherever possible, including the same `hat' notation for series
coefficients. However, to effectively describe the connection between
spaces, we often distinguish the basis functions $e_\bsh$, $\cosb_\bsk$,
$\cheb_\bsk$, and we often use $\bsh$ and $\bsk$ to contrast indices
containing integers $\bbZ$ or only nonnegative integers $\bbN_0$.

The Fourier space contains periodic functions while the cosine and
Chebyshev spaces contain nonperiodic functions. The Fourier space is
often referred to as the Wiener algebra; it is the standard setting for
analyzing periodic functions, see, e.g., \cite{KDV15,KKP12,KV19,PV16}.
The cosine space is connected to the Fourier space by the \textbf{tent
transform} which is defined by $\tent(x) := 1 - |2x-1|$ for $x\in [0,1]$
and is applied componentwise in $d$ dimensions, see, e.g.,
\cite{CKNS16,DNP14,SNC16}. We show that the composition
$\cosb_\bsk\circ\tent$ is the average over all of those exponential basis
functions $e_\bsh$ for which $(|h_1|,\ldots,|h_d|) = \bsk$ (see
\eqref{eq:tent-mapped-cosb} below). Consequently, \emph{the
tent-transformed cosine space is a subspace of the Fourier space}. Thus we
can apply results from the Fourier space to the cosine space via tent
transform.

The Chebyshev space is related to the cosine space by the \textbf{cosine
transform}, given by $\bsx = \cos(\pi\bsx') \in [-1,1]^d$ for $\bsx'\in
[0,1]^d$, where the cosine function is applied componentwise, and we have
$\cheb_\bsk(\bsx) = \cheb_\bsk(\cos(\pi\bsx')) = \cosb_\bsk(\bsx')$. Thus
\emph{the cosine transform provides an isomorphism between the Chebyshev
space and the cosine space}. Trivially all results from the cosine space
can be carried over to the Chebyshev space.

Rank-$1$ lattices have been well studied for integration, reconstruction
and approximation in the Fourier space; see, e.g., \cite{CKN10,SJ94}
for integration, \cite{Kam13,Kam14} for reconstruction, and
\cite{BKUV17,KDV15,KKP12,KV19,KSW06,KSW08,KWW09c,LH03,ZLH06,ZKH09} for
approximation. Given the \emph{generating vector} $\bsz\in \bbZ^d$, the
$n$ points of a rank-$1$ lattice are specified by
\[
  \bst_i \,=\, \frac{i\bsz \bmod n}{n} \,\in\, [0,1]^d
  \qquad\mbox{for}\quad i=0, \ldots,n-1.
\]
For a Fourier space function $f$, the average of function values at the
lattice points
\[
  Q_n(f) \,:=\, \frac{1}{n} \sum_{i=0}^{n-1} f(\bst_i)
\]
is known as a rank-$1$ lattice rule which is an equal-weight cubature rule
for approximating the integral
\[
  I(f) \,:=\, \int_{[0,1]^d} f(\bsx)\,\rd\bsx.
\]

Rank-$1$ lattices have an important property known  as the ``character
property'' (see \eqref{eq:character-property} below) which states that the
cubature sum of the exponential basis functions $Q_n(e_\bsh)$ can only
take the value of $1$ or $0$, depending on whether or not the dot product
$\bsh\cdot\bsz$ is a multiple of~$n$. Since the integral of the basis
function $I(e_\bsh)$ is $1$ if $\bsh = \bszero$ and is $0$ otherwise, we
easily deduce that a rank-$1$ lattice rule can exactly integrate a
function $f = \sum_{\bsh\in\Lambda} \widehat{f}_\bsh\, e_\bsh$ whose
Fourier series is supported on a finite set $\Lambda\subset\bbZ^d$ if and
only if $\bsh\cdot\bsz$ is not a multiple of $n$ for all nonzero vectors
$\bsh\in\Lambda$. This condition in turn leads to an efficient algorithm
to construct a generating vector $\bsz$ with the exactness property in a
component-by-component fashion. This result is stated later in
Lemma~\ref{lem:fou1}, see also \cite{CKN10}, and it can be said to be the
starting point of all results in this paper. Indeed, the result extends to
function reconstruction on $\Lambda$ where we evaluate all the Fourier
coefficients $\widehat{f}_\bsh$ for $\bsh\in\Lambda$ by a rank-$1$ lattice
rule, and the evaluations can be done using the fast Fourier transform.
Using the character property one can deduce a necessary and
sufficient condition when these Fourier coefficients can be recovered
exactly, thus leading to a constructive algorithm to find suitable
generating vectors by working with the ``difference set''
$\Lambda\ominus\Lambda$ which is obtained by forming all differences of
indices in $\Lambda$. We state this
result later in Lemma~\ref{lem:fou2}, which was first proved with varying generality in
\cite{Kam13,Kam14,PV16}. The idea has been further extended to the
construction of ``multiple rank-$1$ lattices'' in \cite{KV19}, where the
benefits of multiple reconstruction lattices are combined strategically to
achieve the same goal with a reduced overall number of sampling nodes; we
do not go down this path.

The connection between the Fourier space and the cosine space allows us to
apply the theory of rank-$1$ lattices to the cosine space by tent
transform. We can obtain necessary and sufficient conditions for
\emph{tent-transformed rank-$1$ lattices} to achieve the integral
exactness and function reconstruction properties in the cosine space, see
Lemmas~\ref{lem:cos1} and~\ref{lem:cos2} below (see also \cite{SNC16} for
part of Lemma~\ref{lem:cos2}). In the case of function reconstruction on a
finite index set $\Lambda\subset\bbN_0^d$, we end up having to work with
quite a large auxiliary index set $\calM(\Lambda)\oplus\calM(\Lambda)$ in
our component-by-component construction of the lattice generating vector,
where $\calM(\Lambda)$ denotes the ``mirrored set'' obtained from
$\Lambda$ by including all sign changes of indices in $\Lambda$, while the
$\oplus$ then means that we form the sum of all indices from
$\calM(\Lambda)$; we call this \textbf{plan A}. To improve on the
computational efficiency of plan~A, we show by working with a
bi-orthonormal set of basis functions that we can achieve function
reconstruction on $\Lambda$ with a weaker condition which means working
with a smaller auxiliary index set $\Lambda\oplus\calM(\Lambda)$, see
Lemma~\ref{lem:plan-b} below; we call this \textbf{plan B}. We also relax
the algorithm to not necessarily recover the normalization of the basis
functions to arrive at \textbf{plan C}, which achieves the same
reconstruction property at a lower computational cost, see
Lemma~\ref{lem:plan-c} below. All three plans for function reconstruction
in the cosine space can be computed using the fast discrete cosine
transform.

The isomorphism between the cosine space and the Chebyshev space allows us
to take all results from the cosine space to the Chebyshev space,
including plans A, B, C. We arrive at \emph{tent-transformed and then
cosine-transformed rank-$1$ lattices}, which in the case of $n$ being even
is also known as ``Chebyshev lattices'', see, e.g., \cite{CP11,PV15},
although we do not adopt this terminology. Our plan C for the Chebyshev
space with even $n$ is essentially the approach in \cite{PV15}; in this
paper we do not require $n$ to be even.

\subsection{Layout of the paper and highlight of new results}

In Section~\ref{sec:fou} we review results on rank-$1$ lattices for
integration and function reconstruction on a finite index set in the
Fourier space, referencing essential results from
\cite{CKN10,Kam13,Kam14,PV16}.

In Section~\ref{sec:cos} we introduce the cosine space and consider
integration and function reconstruction, with three plans for achieving
exact function reconstruction using rank-$1$ lattices with varying costs.
Except for the if-part of Lemma~\ref{lem:cos2} and
Lemma~\ref{lem:unique-points} which was proved in \cite{SNC16}, all
remaining results in this section are new, including plan B and plan C for
function reconstruction and the applicability of fast discrete cosine
transform.

In Section~\ref{sec:che} we present the corresponding results for the
Chebyshev space. Lemma~\ref{lem:che-plan-c} for plan C with even $n$ turns
out to be equivalent to the approach in \cite{PV15}. However, the precise
connection to the cosine space via cosine transform and in turn the
precise connection to the Fourier space via tent transform are both new
interpretations here, and they lead to broader implications in the
Chebyshev space. In particular, the multiplicity of the transformed points
under these interpretations are known explicitly for $n$ both even and
odd, and fast discrete cosine transform can be applied for all $n$.

Section~\ref{sec:cbc} is devoted to the theory and algorithmic aspect of
the component-by-component (CBC) construction for lattice generating
vectors achieving various conditions needed for the exactness properties.
As the theoretical justification for the CBC construction,
Theorem~\ref{thm:cbc} generalises previous results proved in
\cite{CKN10,Kam13,Kam14} and provides a cheaper variant of the algorithm
when building up the index set, while Theorem~\ref{thm:cbc-plan-c} is new
and specific to plan C. The systematic way to combine two different
approaches (namely, the ``brute force'' approach and the ``elimination''
approach, to be explained in Section~\ref{sec:cbc}) in a mixed CBC
construction is new. Strategies for storage and a ``smart lookup'' to
efficiently search through difference and/or sum involving mirrored sets
are also new.

Finally in Section~\ref{sec:app} we interpret our results in the context
of approximation of general functions that are not necessarily supported
on a finite index set, and compare them with discrete least-squares
approximation as analysed in \cite{CCMNT15,CDL13,MNST14,MN15,NXZ14}. We
mention other known results in function approximation based on rank-$1$
lattices (see, e.g., \cite{KWW08,KWW09a,WW99,WW01} for general results and
\cite{BKUV17,CKNS16,KDV15,KV19,KSW06,KSW08,KWW09c,LH03,ZLH06,ZKH09}
for rank-$1$ lattices).

We end the introduction with setting the notation on multiindices and
introducing some special index sets.

\subsection{Notation on multiindices and special index sets}

Throughout this paper we use $\size{}$ to denote the cardinality of a set.
For any multiindex $\bsk\in\bbZ^d$, we write $|\bsk|_0 := \size{\{1\le
j\le\ d: k_j\ne 0\}}$ for the number of nonzero indices in $\bsk$. For
$\bsk,\bsk'\in\bbZ^d$, $\bsk' \le \bsk$ is to be interpreted
componentwise, i.e., $k_j' \le k_j$ for all $j$.

For $\bssigma \in \{\pm 1\}^d$ and $\bsk\in\bbZ^d$, we write
$\bssigma(\bsk) := (\sigma_1 k_1,\ldots,\sigma_d k_d)$ to mean that we
apply the sign changes in $\bssigma$ componentwise to $\bsk$. For any
$\bsk\in\bbZ^d$ we use
\[
  \calS_\bsk \,:=\,
  \left\{ \bssigma \in \{\pm 1\}^d : \sigma_j = +1 \mbox{ for each $j$ for which $k_j = 0$} \right\}
\]
to denote a set of \emph{unique sign changes} for $\bsk$. Then clearly we
have $\size{\calS_\bsk} = 2^{|\bsk|_0}$.

We will consider index sets with some special properties:
\begin{itemize}
\item An index set $\Lambda\subset\bbN_0^d$ is \emph{downward closed}
    if $\bsk' \in \Lambda$ whenever $\bsk' \le \bsk$ and $\bsk \in
    \Lambda$. This means that from every $\bsk \in \Lambda$ we can
    move towards $\bszero$ along the coordinate axes without finding a
    $\bsk' \not\in \Lambda$. Analogous definition holds with
    $\bbN_0^d$ replaced by $\bbZ^d$.

\item An index set $\Lambda\subset\bbZ^d$ is \emph{centrally
    symmetric} if $-\bsk \in \Lambda$ whenever $\bsk \in \Lambda$.

\item An index set $\Lambda\subset\bbZ^d$ is \emph{fully sign
    symmetric} if $\bssigma(\bsk) \in \Lambda$ whenever $\bsk \in
    \Lambda$ and $\bssigma\in\{\pm 1\}^d$.

\item An index set $\Lambda\subset\bbN_0^d$ is an (\emph{anisotropic})
    \emph{tensor product} set if there exist $\bsa,\bsb\in\bbN_0^d$
    such that $\Lambda = \{\bsk\in\bbN_0^d : \bsa\le\bsk\le\bsb\}$.
    Analogous definition holds with $\bbN_0^d$ replaced by $\bbZ^d$.
\end{itemize}

For any index set $\Lambda \subset \bbN_0^d$ or $\Lambda \subset \bbZ^d$,
we denote its largest component in magnitude by
\[
  \max(\Lambda) \,:=\, \max_{\bsk\in\Lambda} \max_{1\le j\le d} |k_j|,
\]
and we define
\begin{align*}
  \Lambda \oplus \Lambda &\,:=\, \{\bsk+\bsk' : \bsk,\bsk'\in\Lambda\}
  && \mbox{(``sum set'')}, \\
  \Lambda \ominus \Lambda &\,:=\, \{\bsk-\bsk' : \bsk,\bsk'\in\Lambda\}
  && \mbox{(``difference set'')}, \\
  \calM(\Lambda) &\,:=\, \left\{\bssigma(\bsk) : \bsk\in\Lambda, \bssigma\in\{\pm 1\}^d\right\}
  \,=\, \bigcup_{\bsk\in\Lambda} \left\{\bssigma(\bsk) : \bssigma\in\calS_\bsk\right\}
  && \mbox{(``mirrored set'')}.
\end{align*}
If $\Lambda\subset\bbZ^d$ is centrally symmetric, then $\Lambda \oplus
\Lambda = \Lambda \ominus \Lambda$. If $\Lambda\in\bbZ^d$ is fully sign
symmetric, then $\calM(\Lambda) = \Lambda$. For $\Lambda \subset \ZZ^d$,
$\Lambda \ominus \Lambda$ is always centrally symmetric (since both $\bsk
- \bsk'$ and $\bsk' - \bsk$ belong to $\Lambda \ominus \Lambda$ when
$\bsk, \bsk' \in \Lambda$).

Trivially we have
\[
 \sizep{\Lambda \oplus \Lambda} \,\le\, (\size{\Lambda})^2, \qquad
 \sizep{\Lambda \ominus \Lambda} \,\le\, (\size{\Lambda})^2,
 \qquad\mbox{and}\qquad
 \size{\calM(\Lambda)} \,\le\, \sum_{\bsk\in\Lambda} 2^{|\bsk|_0}
 \,\le\, 2^d\,\size{\Lambda}.
\]
The squaring effect in the upper bounds for the sum/difference set cannot
be avoided in general, since even if all multiindices in $\Lambda$ fall on
the first two axes (i.e., all components of $\bsk$ are zero except for one
of $k_1$ and $k_2$), the sum/difference sets will contain a large
rectangle (so there is a lower bound of the same order). On the other
hand, the $2^d$ factor in the upper bound for $\calM(\Lambda)$ can
sometimes be improved, as shown in the forthcoming
Lemma~\ref{lem:downward} and Example~\ref{example} below. We also need the
next Lemma~\ref{comb_bound_downward_Z_general}, whose proof uses induction
arguments from \cite{Mig15,CCMNT15}.

\begin{lemma}
\label{comb_bound_downward_Z_general} In any dimension $d$, given any
$\Lambda \subset \mathbb{Z}^d$ downward closed and any polynomial
$p(n)=\sum_{k=0}^{\eta} \coefpol_k n^k$ of degree $\eta\geq 0$ with
nonnegative coefficients $\coefpol_0\leq 1$ and all $\coefpol_k \leq
\binom{\eta +1}{k}$, it holds
\begin{equation}
\sum_{\bsk \in \Lambda} \prod_{j=1}^{d} p( \vert k_j\vert) \leq (\# \Lambda)^{\eta +1}.
\label{eq:thesis_lemma_weakly_down_closed}
\end{equation}
Moreover, it holds
\begin{equation}
  \sum_{\bsk\in\Lambda} 2^{|\bsk|_0} \,\le\, (\size{\Lambda})^{\ln 3/\ln 2}.
\label{eq:thesis_lemma_weakly_down_closed_cheb}
\end{equation}
\end{lemma}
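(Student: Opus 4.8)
The plan is to prove the two inequalities separately, the second not being a special case of the first (its exponent $\ln 3/\ln 2\approx 1.585$ is not an integer). For \eqref{eq:thesis_lemma_weakly_down_closed} I would use a counting/injection argument. First reduce to an extremal polynomial: since $\coefpol_k\ge 0$ and $\coefpol_k\le\binom{\eta+1}{k}$, the binomial theorem gives, for every integer $n\ge 0$,
\[
  0\;\le\;p(n)\;\le\;\sum_{k=0}^{\eta}\binom{\eta+1}{k}n^k\;=\;(n+1)^{\eta+1}-n^{\eta+1}\;=:\;q(n),
\]
so it suffices to prove \eqref{eq:thesis_lemma_weakly_down_closed} with $p$ replaced by $q$. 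Writing $m:=\eta+1$, observe that $q(n)=(n+1)^m-n^m$ is exactly the number of tuples $(t_1,\ldots,t_m)\in\{0,1,\ldots,n\}^m$ with $\max_i t_i=n$. Hence $\sum_{\bsk\in\Lambda}\prod_{j=1}^d q(|k_j|)$ counts the pairs $(\bsk,T)$ with $\bsk\in\Lambda$ and $T=(t_{i,j})_{i\le m,\,j\le d}\in\bbN_0^{m\times d}$ satisfying $\max_{1\le i\le m}t_{i,j}=|k_j|$ for each $j$.

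Next I would inject this set of pairs into $\Lambda^m$, which yields the bound $(\#\Lambda)^m=(\#\Lambda)^{\eta+1}$. To $(\bsk,T)$ associate $(\bsk^{(1)},\ldots,\bsk^{(m)})\in(\bbZ^d)^m$ with $k^{(i)}_j:=\operatorname{sign}(k_j)\,t_{i,j}$ (and $\operatorname{sign}(0):=1$), i.e., each $\bsk^{(i)}$ is the $i$-th row of $T$ with the signs of $\bsk$ reattached componentwise. Since $|k^{(i)}_j|=t_{i,j}\le|k_j|$ with matching sign, $\bsk^{(i)}$ is reached from $\bsk$ by moving toward $\bszero$ along coordinate axes, hence $\bsk^{(i)}\in\Lambda$ by downward closedness. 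The map is injective: $T$ is recovered via $t_{i,j}=|k^{(i)}_j|$, then $|k_j|=\max_i t_{i,j}$, and when $|k_j|>0$ the sign of $k_j$ is read off from any row with $t_{i,j}\ne 0$ (all such rows agree by construction). This gives \eqref{eq:thesis_lemma_weakly_down_closed}.

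For \eqref{eq:thesis_lemma_weakly_down_closed_cheb} I would induct on $d$ by slicing off a coordinate, the base case $d=0$ and the case $\Lambda=\emptyset$ being trivial. Set $\theta:=\ln 3/\ln 2$, so $2^\theta=3$. Slicing $\Lambda\subset\bbZ^d$ on the last coordinate, write $\Lambda^{[v]}:=\{\bsk'\in\bbZ^{d-1}:(\bsk',v)\in\Lambda\}$; downward closedness (moving the last coordinate toward $0$) makes each $\Lambda^{[v]}$ downward closed in $\bbZ^{d-1}$, makes the set of occurring $v$ an interval $\{-a,\ldots,b\}$ containing $0$, and gives $\Lambda^{[0]}\supseteq\Lambda^{[v]}$ for all $v$. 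Splitting according to whether the last coordinate vanishes and invoking the induction hypothesis in dimension $d-1$,
\[
  \sum_{\bsk\in\Lambda}2^{|\bsk|_0}
  \;=\;\sum_{\bsk'\in\Lambda^{[0]}}2^{|\bsk'|_0}
  \;+\;2\sum_{v\ne 0}\sum_{\bsk'\in\Lambda^{[v]}}2^{|\bsk'|_0}
  \;\le\;N_0^\theta+2\sum_{i=1}^{r}p_i^\theta,
\]
where $N_0:=\#\Lambda^{[0]}$ and $p_1\ge\ldots\ge p_r$ are the cardinalities $\#\Lambda^{[v]}$ for $v\ne 0$; note $p_i\le N_0$ and $N_0+\sum_i p_i=\#\Lambda$.

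It then remains to prove the scalar inequality $x^\theta+2y^\theta\le(x+y)^\theta$ for all $x\ge y\ge 0$, and I expect this to be the main obstacle, being the one place where the exponent $\ln 3/\ln 2$ must be used precisely. Granting it, the induction closes by telescoping: with $S_0:=N_0$ and $S_i:=S_{i-1}+p_i$ one has $p_i\le N_0\le S_{i-1}$, hence $S_{i-1}^\theta+2p_i^\theta\le S_i^\theta$, and summing over $i=1,\ldots,r$ gives $N_0^\theta+2\sum_i p_i^\theta\le S_r^\theta=(\#\Lambda)^\theta$. The scalar inequality itself is elementary: after dividing by $x^\theta$ (the case $x=0$ forcing $y=0$) it reads $\phi(t):=(1+t)^\theta-2t^\theta\ge 1$ on $[0,1]$, where $\phi(0)=\phi(1)=1$ and
\[
  \phi'(t)\;=\;\theta\,t^{\theta-1}\bigl[(1+\tfrac1t)^{\theta-1}-2\bigr].
\]
Since $\theta-1=\log_2\tfrac32>0$, the bracket is positive and strictly decreasing on $(0,1]$, with limit $+\infty$ as $t\to0^+$ and value $2^{\theta-1}-2=-\tfrac12$ at $t=1$; thus $\phi$ increases and then decreases on $[0,1]$, so $\phi\ge\min\{\phi(0),\phi(1)\}=1$, which is the desired inequality.
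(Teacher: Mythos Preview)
Your proof is correct and self-contained, but it proceeds differently from the paper. The paper does not argue directly in $\mathbb{Z}^d$; instead it builds, from any downward closed $\Lambda\subset\mathbb{Z}^d$, a downward closed set $\widetilde{\Lambda}\subset\mathbb{N}_0^{2d}$ with $\#\widetilde{\Lambda}=\#\Lambda$ by splitting each coordinate into its positive and negative parts, and then invokes the already-known $\mathbb{N}_0$ versions of the two inequalities (Theorem~1 of \cite{Mig15} for \eqref{eq:thesis_lemma_weakly_down_closed} and Lemma~3.3 of \cite{CCMNT15} for \eqref{eq:thesis_lemma_weakly_down_closed_cheb}). Your route avoids both the dimension-doubling embedding and the external citations: for \eqref{eq:thesis_lemma_weakly_down_closed} you give the combinatorial injection into $\Lambda^{\eta+1}$ directly, tracking signs to handle negative indices, and for \eqref{eq:thesis_lemma_weakly_down_closed_cheb} you run the slicing induction over $\mathbb{Z}$-valued coordinates, exploiting that every slice $\Lambda^{[v]}$ sits inside the zero slice $\Lambda^{[0]}$. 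The trade-off is that the paper's reduction is shorter on the page (the heavy lifting is deferred to references), whereas your argument is fully elementary and makes the role of the exponent $\ln 3/\ln 2$ explicit via the scalar inequality $x^\theta+2y^\theta\le(x+y)^\theta$ for $x\ge y\ge 0$.
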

\begin{proof}
When $\eta=0$ the result holds true. Consider then the case $\eta \geq 1$.
Every downward closed set $\Lambda \subset \mathbb{Z}^d$ can be seen as a
set $\widetilde{\Lambda} \subset \mathbb{N}_0^{2d}$ constructed in the
following way. Start with $\widetilde{\Lambda}=\emptyset$. For any $\bsk
\in \Lambda$, define the sets $C_\bsk:=\{ 1\le j\le d : k_j<0 \}\subseteq
\{1,\ldots,d\}$ and $U_\bsk:=\{1,\ldots,d\}\setminus C_\bsk$. Then define
$\widetilde{\bsk} \in \mathbb{N}_0^{2d}$ by setting $\widetilde{k}_j= k_j$
and $\widetilde{k}_{j+d}=0$ for all $j\in U_\bsk$, and $\widetilde{k}_j=0$
and $\widetilde{k}_{j+d}=-k_j$ for all $j\in C_\bsk$. Finally add
$\widetilde{\bsk}$ to $\widetilde{\Lambda}$. Notice that this algorithm
establishes a one-to-one correspondence between the elements of $\Lambda$
and $\widetilde{\Lambda}$, and therefore $\#\Lambda=\#
\widetilde{\Lambda}$. By construction $\widetilde{\Lambda}$ is also
downward closed in $\mathbb{N}_0^{2d}$. Applying Theorem 1 from
\cite{Mig15} to the set $\widetilde{\Lambda} \in \mathbb{N}_0^{2d}$ we
obtain \eqref{eq:thesis_lemma_weakly_down_closed}.

For the proof of \eqref{eq:thesis_lemma_weakly_down_closed_cheb}, as
above, starting from $\Lambda$ we construct the downward closed set
$\widetilde{\Lambda}\subset \mathbb{N}_0^{2d}$  such that
$\#\widetilde{\Lambda}=\#\Lambda$, and then apply Lemma 3.3 from
\cite{CCMNT15} to the set $\widetilde{\Lambda}$.
\end{proof}

\begin{lemma} \label{lem:downward}
If $\Lambda\subset\bbZ^d$ is downward closed then
\[
  \max_{\bsk\in\Lambda} 2^{|\bsk|_0} \,\le\, \size{\Lambda},
  \quad
  \sum_{\bsk\in\Lambda} 2^{|\bsk|_0} \,\le\, (\size{\Lambda})^{\ln 3/\ln 2},
  \quad\mbox{and}\quad
  \size{\calM(\Lambda)} \,\le\, \min\left(2^d\,\size{\Lambda},(\size{\Lambda})^{\ln 3/\ln 2}\right).
\]
\end{lemma}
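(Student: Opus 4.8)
The plan is to handle the three inequalities separately. The first is an elementary counting argument that uses downward closedness directly; the second is literally inequality~\eqref{eq:thesis_lemma_weakly_down_closed_cheb} of Lemma~\ref{comb_bound_downward_Z_general}, so nothing new is required; and the third follows by feeding the trivial bound and the second inequality into the union bound $\size{\calM(\Lambda)}\le\sum_{\bsk\in\Lambda}2^{|\bsk|_0}$ already recorded above.

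For the first inequality, fix $\bsk\in\Lambda$ and let $m:=|\bsk|_0$, with the nonzero components of $\bsk$ located at positions $j_1<\cdots<j_m$. For each subset $S\subseteq\{j_1,\ldots,j_m\}$ define the multiindex $\bsk^{(S)}$ that agrees with $\bsk$ on the components indexed by $S$ and is zero in every other component. Every component of $\bsk^{(S)}$ lies (inclusively) between $0$ and the corresponding component of $\bsk$ — trivially so, since it is either $k_j$ itself or $0$ — so $\bsk^{(S)}\in\Lambda$ by downward closedness over $\bbZ^d$ (in the sense that one may move any component towards $0$ without leaving $\Lambda$, which is precisely the property used in the proof of Lemma~\ref{comb_bound_downward_Z_general}). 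The $2^m$ multiindices $\bsk^{(S)}$ are pairwise distinct, because if $j\in S\setminus S'$ then $\bsk^{(S)}$ has the nonzero entry $k_j$ in position $j$ while $\bsk^{(S')}$ has $0$ there. Hence $\size{\Lambda}\ge 2^m=2^{|\bsk|_0}$, and taking the maximum over $\bsk\in\Lambda$ gives the claim.

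For the third inequality, recall the union bound $\size{\calM(\Lambda)}\le\sum_{\bsk\in\Lambda}\size{\calS_\bsk}=\sum_{\bsk\in\Lambda}2^{|\bsk|_0}$, valid for any $\Lambda$. Since $2^{|\bsk|_0}\le 2^d$, the right-hand side is at most $2^d\,\size{\Lambda}$; since $\Lambda$ is downward closed, it is also at most $(\size{\Lambda})^{\ln 3/\ln 2}$ by the second inequality. Taking the smaller of the two bounds yields $\size{\calM(\Lambda)}\le\min\bigl(2^d\,\size{\Lambda},(\size{\Lambda})^{\ln 3/\ln 2}\bigr)$. I do not expect any genuine obstacle here: the one point that needs care is making explicit which notion of ``downward closed over $\bbZ^d$'' is in force in the first step (namely, closedness under shrinking the magnitude of any coordinate towards zero), after which all three parts are routine.
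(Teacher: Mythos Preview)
Your proof is correct and follows essentially the same approach as the paper's. For the first bound the paper counts the entire hyper-rectangle between $\bsk$ and the origin, obtaining $\size{\Lambda}\ge\prod_{k_j\ne 0}(1+|k_j|)\ge 2^{|\bsk|_0}$, whereas you count only the $2^{|\bsk|_0}$ ``corner'' points obtained by zeroing out subsets of the nonzero coordinates; both arguments rest on the same downward-closed property and reach the same conclusion, and the remaining two bounds are handled identically by invoking Lemma~\ref{comb_bound_downward_Z_general} and the union bound for $\calM(\Lambda)$.
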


\begin{proof}
For the first bound, since $\Lambda$ is downward closed, for any $\bsk\in
\Lambda$, the set $\Lambda$ will include the hyper-rectangle with $\bsk$
and the origin as corners. Thus $\size{\Lambda} \ge \prod_{1\le j\le
d,\,k_j\ne 0} (1 + |k_j|) \ge 2^{|\bsk|_0}$. The second bound is proved in
Lemma~\ref{comb_bound_downward_Z_general}. The third bound is an immediate
consequence of the second bound.
\end{proof}

Since $\size{\Lambda}$ most likely grows with $d$, in general it is not
obvious which of $2^d\,\size{\Lambda}$ or $(\size{\Lambda})^{\ln 3/\ln 2}$
is a better bound for $\size{\calM(\Lambda)}$. If $\size{\Lambda}$ can be
bounded independently of $d$, then most likely so can
$\size{\calM(\Lambda)}$.

\begin{example} \label{example}
Consider a ``weighted'' index set of ``degree'' $m\in\bbN$ defined by
(see, e.g., \cite{CKN10}) $\Lambda \,=\, \{\bsk\in \bbN_0^d : r(\bsk) \le
m\}$, where $r(\bsk)$ is given by
\begin{equation} \label{eq:weighted}
  \max_{1\le j\le d} \frac{k_j}{\beta_j},
  \quad
  \sum_{1\le j\le d} \frac{k_j}{\beta_j},
  \quad\mbox{or}\quad
  \prod_{j=1}^d \max\left(1,\frac{k_j}{\beta_j}\right),
\end{equation}
with $1=\beta_1\ge \beta_2\ge \cdots > 0$ and $\sum_{j=1}^\infty \beta_j <
\infty$. The first example is an anisotropic tensor product set and is the
largest of the three examples. We have $\size{\Lambda} = \prod_{j=1}^d (1
+ \lfloor \beta_j m\rfloor) \le \exp(m \sum_{j=1}^\infty \beta_j)$ and
$\size{\calM(\Lambda)} = \prod_{j=1}^d (1 + 2\lfloor \beta_j m\rfloor) \le
\exp(2m \sum_{j=1}^\infty \beta_j)$, so both are bounded independently
of~$d$. Their ratio satisfies
\[
  \frac{\size{\calM(\Lambda)}}{\size{\Lambda}}
  \,=\, \prod_{j=1}^d \left(1 + \frac{\lfloor \beta_j m\rfloor}{1+ \lfloor \beta_j m\rfloor}\right)
  \,\le\, \min\bigg(2^J, \exp\bigg(m\sum_{j=1}^\infty \beta_j\bigg)\bigg),
\]
where $J$ is the ``truncation dimension'' such that $\beta_j m < 1$ for
all $j\ge J$. If we have $\lambda\in (0,1]$ such that $\sum_{j=1}^\infty
\beta_j^\lambda <\infty$, then $\beta_J < J^{-1/\lambda}
(\sum_{j=1}^\infty \beta_j^\lambda)^{1/\lambda}$ and it suffices to take
$J = m^\lambda (\sum_{j=1}^\infty \beta_j^\lambda)$. Both upper bounds on
the ratio grow exponentially with $m$.

The third example in \eqref{eq:weighted} is the smallest of the three. Its
mirror set is commonly referred to as the ``Zaremba cross'' or
``hyperbolic cross'', see, e.g., \cite{CKN10,Kam13}. For all $\tau> 1$ we
have
\begin{align*}
  m+1 \,\le\, \size{\Lambda}
  &\,\le\, m^\tau \prod_{j=1}^d \left(1 + \zeta(\tau)\beta_j^\tau\right)
  \,\le\, m^\tau \exp\bigg(\zeta(\tau)\sum_{j=1}^\infty \beta_j^\tau\bigg), \\
  2m+1 \le \size{\calM(\Lambda)}
  &\,\le\, m^\tau \prod_{j=1}^d \left(1 + 2\zeta(\tau)\beta_j^\tau\right)
  \,\le\, (\size{\Lambda})^\tau \exp\bigg(2\zeta(\tau)\sum_{j=1}^\infty \beta_j^\tau\bigg),
\end{align*}
where $\zeta(\tau) := \sum_{k=1}^\infty k^{-\tau}$ is the Riemann zeta
function. Since $\tau$ can be arbitrarily close to $1$,
$\size{\calM(\Lambda)}$ is essentially of the same order as
$\size{\Lambda}$, both are bounded independently of $d$. The upper bound
on $\size{\calM(\Lambda)}$ is proved in \cite{KSW06}.
\end{example}

\section{Periodic setting based on trigonometric polynomials}
\label{sec:fou}

\subsection{Fourier series}

We start by considering periodic functions on $[0,1]^d$. Let $\calF^{\rm
Four}$ denote the space of complex-valued functions defined on $[0,1]^d$
with absolutely converging Fourier series:
$$
  \calF^{\rm Four}
  :=
  \Bigl\{ f \in L^2 \mathbin{\Big|} f : [0,1]^d \to \mathbb{C},\;
  f(\bsx) = \sum_{\bsh \in \bbZ^d} \widehat{f}_\bsh \, e^{2\pi\rmi\,\bsh\cdot\bsx}
  \text{ and } \sum_{\bsh \in \bbZ^d} |\widehat{f}_\bsh| < \infty \Bigr\},
$$
where $\bsh\cdot\bsx := h_1x_1 + \cdots + h_dx_d$ is the usual dot product
and $\widehat{f}_\bsh$ are the Fourier coefficients. We equip $\calF^{\rm
Four}$ with the usual $L^2$ inner product
\begin{equation} \label{eq:inner1}
  \langle f_1, f_2 \rangle
  \,:=\, \int_{[0,1]^d} f_1(\bsx) \overline{f_2(\bsx)}\,\rd\bsx.
\end{equation}
The exponential functions form an orthonormal basis
\[
  e_\bsh(\bsx) \,:=\, e^{2\pi\rmi\,\bsh\cdot\bsx}
\]
satisfying $\langle e_\bsh,e_{\bsh'}\rangle = \delta_{\bsh,\bsh'}$, where
the Kronecker delta function yields $1$ if $\bsh=\bsh'$ and $0$ if
$\bsh\ne\bsh'$. The Fourier coefficients are given by
\[
 \widehat{f}_\bsh
 \,:=\, \langle f, e_\bsh \rangle
 \,=\, \int_{[0,1]^d} f(\bsx)\, e^{-2\pi\rmi\,\bsh\cdot\bsx}\,\rd\bsx,
 \qquad \bsh \in \bbZ^d.
\]
The norm of $f$ satisfies $\|f\|^2 \,=\, \int_{[0,1]^d} |f(\bsx)|^2
\,\rd\bsx \,=\, \sum_{\bsh\in\bbZ^d} |\widehat{f}_\bsh|^2$.

\subsection{Fourier coefficients by cubature}

For $f\in\calF^{\rm Four}$ we define the integral operator
\[
  I(f) \,:=\, \int_{[0,1]^d} f(\bsx)\,\rd\bsx.
\]
Later we will seek a cubature formula $Q_n(f)$ which uses linear
combinations of $n$ evaluations of $f$ to approximate $I(f)$. We define a
discrete inner product
\[
  \langle f_1,f_2\rangle_n \,:=\, Q_n(f_1\, \overline{f_2})
\]
as an approximation to \eqref{eq:inner1}.

Given an arbitrary finite index set $\Lambda\subset\bbZ^d$, we consider
the subspace $\calF^{\rm Four}_\Lambda$ of all functions whose Fourier
series is supported solely on $\Lambda$, i.e.,
\begin{align} \label{eq:f}
  \text{for } f \in \calF^{\rm Four}_\Lambda: \qquad
  f(\bsx) \,=\, \sum_{\bsh\in\Lambda} \widehat{f}_\bsh\,e^{2\pi\rmi\,\bsh\cdot\bsx}.
\end{align}
Implicitly, this means that all other Fourier coefficients of $f$ are
zero, i.e., $\widehat{f}_\bsh$ = 0 for $\bsh\notin\Lambda$.

In this paper we will demand one or both of the following related
properties on the cubature formula:

\begin{itemize}
\item \textbf{Integral exactness.} We want our cubature formula to be
    exact for all functions which are supported solely on $\Lambda$,
    i.e., we want $Q_n(f) = I(f)$ for all $f\in\calF^{\rm
    Four}_\Lambda$. This holds if and only if
\[ 
  Q_n(e_\bsh) \,=\, I(e_\bsh)
  \,=\, \delta_{\bsh,\bszero}
  \qquad\mbox{for all } \bsh\in\Lambda,
\] 
i.e., our cubature formula integrates exactly all basis functions
$e^{2\pi\rmi\,\bsh\cdot\bsx}$ with $\bsh\in\Lambda$.

\item \textbf{Function reconstruction.} Instead of \eqref{eq:f} we
    consider
\[ 
  f^a(\bsx) \,=\,
  \sum_{\bsh\in\Lambda} \widehat{f}_\bsh^a \,e^{2\pi\rmi\,\bsh\cdot\bsx},
\] 
where each Fourier coefficient $\widehat{f}_\bsh = \langle
f,e_\bsh\rangle = I(f\,e_{-\bsh})$ in \eqref{eq:f} is replaced by the
cubature formula $\widehat{f}_\bsh^a := \langle f,e_\bsh\rangle_n =
Q_n(f\,e_{-\bsh})$. We demand the ``non-aliasing'' condition that
\[
  \widehat{f}_\bsh^a \,=\, \widehat{f}_\bsh
  \qquad\mbox{for all } \bsh\in\Lambda
  \mbox{ and } f\in\calF^{\rm Four}_\Lambda,
\]
so that $f^a$ is a reconstruction of $f$. (If other coefficients
$\widehat{f}_{\bsh'}$ with $\bsh'\ne\bsh$ contribute to
$\widehat{f}_\bsh^a$ then this is called ``aliasing''.) Using the
linearity of $Q_n$, we then have
\begin{align*}
 \widehat{f}_\bsh^a &\,=\,Q_n(f\,e_{-\bsh}) \\
 &\,=\, Q_n \bigg(\bigg(\sum_{\bsh'\in\Lambda} \widehat{f}_{\bsh'}\,e_{\bsh'}\bigg) e_{-\bsh}\bigg)
 \,=\, \sum_{\bsh'\in\Lambda} \widehat{f}_{\bsh'}\,Q_n(e_{\bsh'-\bsh})
 = \widehat{f}_\bsh
 \quad\mbox{for all } \bsh\in\Lambda
 \mbox{ and } f\in\calF^{\rm Four}_\Lambda.
\end{align*}
This holds if and only if
\[
  Q_n(e_{\bsh'-\bsh})
  \,=\, \langle e_{\bsh'}, e_\bsh\rangle
  \,=\, \delta_{\bsh,\bsh'}
 \qquad\mbox{for all } \bsh,\bsh'\in\Lambda,
\]
which is equivalent to
\[ 
  Q_n(e_{\bsh})
  \,=\, I(e_{\bsh})
  \,=\, \delta_{\bsh,\bszero}
 \qquad\mbox{for all } \bsh\in\Lambda\ominus\Lambda,
\] 
i.e., our cubature formula integrates exactly all basis functions
$e^{2\pi\rmi\,\bsh\cdot\bsx}$ with $\bsh\in\Lambda\ominus\Lambda$.
\end{itemize}

\subsection{Rank-1 lattice rules}

Consider now the cubature formula given by \emph{rank-$1$ lattices}
\begin{equation} \label{eq:lat}
  Q_n(f) \,:=\, \frac{1}{n} \sum_{i=0}^{n-1} f\left(\frac{i\bsz \bmod n}{n}\right),
\end{equation}
where $\bsz\in\bbZ^d$ is an integer vector known as the \emph{generating
vector}. It is easy to verify the ``character property'' that for any
$\bsh\in\bbZ^d$,
\begin{align}\label{eq:character-property}
  Q_n (e_\bsh) \,=\, \frac{1}{n} \sum_{i=0}^{n-1} e^{2\pi\rmi\, i\bsh\cdot\bsz/n}
  \,=\,
  \begin{cases}
  1 & \mbox{if } \bsh\cdot\bsz\equiv_n 0, \\
  0 & \mbox{otherwise},
  \end{cases}
\end{align}
where the notation $a \equiv_n b$ means that $(a\bmod n) = (b\bmod n)$.
This leads to the well-known lattice cubature error formula for
$f\in\calF^{\rm Four}$
\[
  Q_n(f) - I(f)
  \,=\, \sum_{\satop{\bsh\in\bbZ^d\setminus\{\bszero\}}{\bsh\cdot\bsz\equiv_n 0}} \widehat{f}_\bsh.
\]
The set of integer vectors $\{\bsh\in\bbZ^d : \bsh\cdot\bsz \equiv_n 0\}$
is known as the \emph{dual lattice}. Clearly the cubature rule is exact
for a function $f$ solely supported on $\Lambda$ if and only if the dual
lattice does not contain any index from $\Lambda\setminus\{\bszero\}$. We
know how to obtain such a lattice rule generating vector using a
component-by-component construction.

\begin{lemma}[Integral exactness] \label{lem:fou1}
Let $\Lambda \subset \bbZ^d$ be an arbitrary index set. A lattice rule
with $n$ points and generating vector $\bsz$ integrates exactly all
functions $f \in \calF^{\rm Four}_\Lambda$ solely supported on $\Lambda$
if and only if
\[ 
  \bsh\cdot\bsz\not\equiv_n 0
  \qquad\mbox{for all }
  \bsh \in \Lambda \setminus\{\bszero\}.
\] 
Such a generating vector $\bsz$ can be constructed component-by-component
if $n$ is a prime satisfying
$$
  n > \max\left\{ \frac{\sizep{\Lambda\nozero}}{\kappa} + 1 , \,\max(\Lambda) \right\}
  ,
$$
with $\kappa=2$ if $\Lambda$ is centrally symmetric and $\kappa=1$
otherwise.
\end{lemma}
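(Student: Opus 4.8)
The iff-characterisation is already established in the body text above via the character property \eqref{eq:character-property}: the lattice rule integrates exactly all $f\in\calF^{\rm Four}_\Lambda$ precisely when the dual lattice meets $\Lambda\setminus\{\bszero\}$ trivially, i.e. $\bsh\cdot\bsz\not\equiv_n 0$ for all $\bsh\in\Lambda\setminus\{\bszero\}$. So the substance of the statement is the constructive part: that a CBC search produces such a $\bsz$ whenever $n$ is prime and large enough. The plan is a standard averaging (pigeonhole / probabilistic counting) argument carried out one component at a time.

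First I would set up the induction on the dimension $s=1,\dots,d$, maintaining the invariant that after choosing $z_1,\dots,z_s$ we have $\bsh\cdot(z_1,\dots,z_s)\not\equiv_n 0$ for every $\bsh$ in the projection of $\Lambda\setminus\{\bszero\}$ onto the first $s$ coordinates that is not identically zero there. For the inductive step, suppose $z_1,\dots,z_{s-1}$ are fixed; I want to count the ``bad'' choices of $z_s\in\{0,1,\dots,n-1\}$, namely those for which some $\bsh\in\Lambda\setminus\{\bszero\}$ (with its first $s$ coordinates, the rest irrelevant) satisfies $h_1z_1+\cdots+h_sz_s\equiv_n 0$. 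Split these $\bsh$ according to whether $h_s\equiv_n 0$ or not. If $h_s\equiv_n 0$ then, since $|h_s|\le\max(\Lambda)<n$, in fact $h_s=0$, and the congruence no longer involves $z_s$; by the induction hypothesis such an $\bsh$ is already harmless (it was either zero on the first $s-1$ coordinates, hence zero on the first $s$, contradicting $\bsh\ne\bszero$ only if all later coordinates also vanish — handled by starting the induction so that the first nonzero coordinate is reached — or it was already non-congruent). If $h_s\not\equiv_n 0$, then because $n$ is prime $h_s$ is invertible mod $n$, so the congruence $h_1z_1+\cdots+h_sz_s\equiv_n 0$ pins down $z_s$ to exactly one residue mod $n$. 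Hence each such $\bsh$ forbids at most one value of $z_s$.

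Now I count how many distinct $\bsh$ can contribute. The forbidden set of $z_s$ has size at most the number of relevant $\bsh\in\Lambda\setminus\{\bszero\}$, but we may pair $\bsh$ with $-\bsh$: they impose the same congruence on $z_s$ (one is $0\equiv_n 0$-scaled of the other). When $\Lambda$ is centrally symmetric this genuinely halves the count, giving at most $\lfloor\sizep{\Lambda\nozero}/2\rfloor$ forbidden values, hence at most $\sizep{\Lambda\nozero}/\kappa$ with $\kappa=2$; in the general case we only get $\kappa=1$. Also $z_s=0$ itself may need to be excluded (it leaves the congruence unchanged and could fail for an $\bsh$ supported only on coordinate $s$, i.e. with $0<|h_s|<n$ — that $\bsh$ then reads $h_sz_s\equiv_n0$, forcing $z_s\not\equiv_n 0$), which is why one extra unit appears, giving the threshold $n>\sizep{\Lambda\nozero}/\kappa+1$. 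Since there are $n$ candidate residues and fewer than $n$ are forbidden, an admissible $z_s$ exists; the condition $n>\max(\Lambda)$ guarantees $|h_s|<n$ throughout, which is what makes the prime-invertibility and the ``$h_s\equiv_n 0\Rightarrow h_s=0$'' reductions legitimate. Iterating over $s=1,\dots,d$ completes the CBC construction.

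The main obstacle — and the point I would be most careful about — is the bookkeeping at the boundary between coordinates: correctly handling multiindices $\bsh$ whose first nonzero coordinate is exactly $s$ (so that the induction hypothesis says nothing yet) versus those already constrained, and making the $\kappa=2$ saving rigorous by checking that the $\bsh\leftrightarrow-\bsh$ pairing is fixed-point-free on the relevant set (which it is, since $\bsh=-\bsh$ forces $\bsh=\bszero$). The arithmetic with floors, and whether the ``$+1$'' is needed to also forbid $z_s=0$, is routine once the counting is set up; I would state it as: the number of forbidden residues is at most $\lfloor \sizep{\Lambda\nozero}/\kappa\rfloor$, plus possibly the residue $0$, so any prime $n$ strictly exceeding $\sizep{\Lambda\nozero}/\kappa+1$ and $\max(\Lambda)$ suffices.
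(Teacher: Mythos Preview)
Your proposal is correct and follows essentially the same route as the paper: the paper defers the CBC part to Theorem~\ref{thm:cbc}, whose proof is exactly the induction on $s$ you describe, splitting $\bsh$ according to whether $h_s\equiv_n 0$, using primality of $n$ to get at most one forbidden residue per index, and pairing $\bsh$ with $-\bsh$ for the centrally symmetric saving. The only cosmetic differences are that the paper restricts $z_s\in\ZZ_n^*=\{1,\ldots,n-1\}$ from the outset (so $n-1$ candidates against at most $\sizep{\Lambda\nozero}/\kappa$ forbidden, yielding the same bound) rather than allowing $z_s=0$ and then excluding it, and that the paper formalises the ``projection'' of $\Lambda$ to the first $s$ coordinates explicitly via \eqref{eq:set-s0} or \eqref{eq:set-s}, which cleans up the bookkeeping you flagged as the main obstacle.
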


\begin{proof}
The result for some standard anisotropic, downward closed and centrally
symmetric sets $\Lambda$ can be found in Cools, Kuo \& Nuyens
\cite{CKN10}. A proof for general index sets is provided later in
Section~\ref{sec:cbc}, see Theorem~\ref{thm:cbc} and Remark~\ref{rem:cbc}.
A similar proof can be found in K\"ammerer~\cite{Kam14}.
\end{proof}

\begin{lemma}[Function reconstruction] \label{lem:fou2}
Let $\Lambda \subset \bbZ^d$ be an arbitrary index set. A lattice rule
$Q_n$ with $n$ points and generating vector $\bsz$ reconstructs exactly
the Fourier coefficients of all functions $f \in \calF^{\rm Four}_\Lambda$
solely supported on $\Lambda$, by
\[
 \widehat{f}_\bsh \,=\, \widehat{f}_\bsh^a \,:=\,Q_n(f\,e_{-\bsh})
 \qquad\mbox{for all } \bsh\in\Lambda,
\]
if and only if
\[ 
 \bsh\cdot\bsz\not\equiv_n 0 \qquad\mbox{for all } \bsh\in (\Lambda\ominus\Lambda)\nozero,
\] 
which is equivalent to
\[ 
 \bsh\cdot\bsz\not\equiv_n \bsh'\cdot\bsz \qquad\mbox{for all } \bsh,\bsh'\in \Lambda \mbox{ with }\bsh\ne\bsh'.
\] 
Such a generating vector $\bsz$ can be constructed component-by-component
if $n$ is a prime satisfying
\[ 
  n >
  \max\left\{
  \frac{\sizep{\Lambda\ominus\Lambda} + 1}{2}, \, 2 \max(\Lambda)
  \right\}.
\] 
\end{lemma}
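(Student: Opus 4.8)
The equivalence of the three conditions is essentially immediate. The first iff (exact reconstruction $\iff$ $\bsh\cdot\bsz\not\equiv_n 0$ on $(\Lambda\ominus\Lambda)\nozero$) was already derived in the ``Function reconstruction'' bullet above combined with the character property \eqref{eq:character-property}: non-aliasing holds iff $Q_n(e_{\bsh'-\bsh})=\delta_{\bsh,\bsh'}$ for all $\bsh,\bsh'\in\Lambda$, and by the character property $Q_n(e_{\bsh'-\bsh})$ equals $1$ when $(\bsh'-\bsh)\cdot\bsz\equiv_n 0$ and $0$ otherwise, so the condition is exactly that $(\bsh'-\bsh)\cdot\bsz\equiv_n 0$ forces $\bsh=\bsh'$ — i.e.\ $\bsh\cdot\bsz\not\equiv_n 0$ for every nonzero $\bsh\in\Lambda\ominus\Lambda$. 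Rewriting $(\bsh'-\bsh)\cdot\bsz\not\equiv_n 0$ as $\bsh\cdot\bsz\not\equiv_n\bsh'\cdot\bsz$ gives the third formulation. So the only real content is the last sentence: the CBC constructibility under the stated bound on the prime $n$.

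For that, the plan is to \emph{reduce to Lemma~\ref{lem:fou1}}. Set $\Lambda' := \Lambda\ominus\Lambda$. This set is centrally symmetric (as noted in the excerpt, since $\bsk-\bsk'$ and $\bsk'-\bsk$ both lie in it), so Lemma~\ref{lem:fou1} applies to $\Lambda'$ with $\kappa=2$: a generating vector $\bsz$ with $\bsh\cdot\bsz\not\equiv_n 0$ for all $\bsh\in\Lambda'\nozero$ can be built component-by-component whenever $n$ is prime with
\[
  n > \max\left\{ \frac{\sizep{\Lambda'\nozero}}{2} + 1,\ \max(\Lambda') \right\}.
\]
It then remains only to check that the hypothesis $n > \max\{(\sizep{\Lambda\ominus\Lambda}+1)/2,\ 2\max(\Lambda)\}$ implies this. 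For the cardinality term: $\sizep{\Lambda'\nozero} = \sizep{\Lambda\ominus\Lambda} - 1$ (the zero vector always lies in $\Lambda\ominus\Lambda$ when $\Lambda\neq\emptyset$), so $\sizep{\Lambda'\nozero}/2 + 1 = (\sizep{\Lambda\ominus\Lambda}-1)/2 + 1 = (\sizep{\Lambda\ominus\Lambda}+1)/2$, matching exactly. For the magnitude term: $\max(\Lambda\ominus\Lambda) \le 2\max(\Lambda)$ by the triangle inequality applied componentwise to $\bsk-\bsk'$. Hence the stated bound on $n$ is at least as strong as the one Lemma~\ref{lem:fou1} requires for $\Lambda'$, and the CBC construction goes through.

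I do not anticipate a genuine obstacle here — the lemma is a clean corollary of Lemma~\ref{lem:fou1} together with the bookkeeping above. The one point to state carefully is that the CBC algorithm is literally run on the difference set $\Lambda\ominus\Lambda$ (not on $\Lambda$), which is why the construction cost is governed by $\sizep{\Lambda\ominus\Lambda}$; and one should note that the constructed $\bsz$ simultaneously gives integral exactness on $\Lambda\ominus\Lambda\supseteq\Lambda$, which is consistent since reconstruction is the stronger property. Since the detailed CBC argument is deferred to Section~\ref{sec:cbc} (Theorem~\ref{thm:cbc}) already in the statement of Lemma~\ref{lem:fou1}, the proof of Lemma~\ref{lem:fou2} can simply invoke that and perform the two inequality comparisons above.
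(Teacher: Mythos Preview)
Your proposal is correct and follows essentially the same approach as the paper: the paper's proof invokes Theorem~\ref{thm:cbc} directly (rather than routing through Lemma~\ref{lem:fou1}, which itself defers to Theorem~\ref{thm:cbc}), noting exactly as you do that $\Lambda\ominus\Lambda$ is centrally symmetric and contains $\bszero$, so that $\tfrac{1}{2}\sizep{(\Lambda\ominus\Lambda)\setminus\{\bszero\}}+1 = \tfrac{1}{2}(\sizep{\Lambda\ominus\Lambda}+1)$. Your additional observation that $\max(\Lambda\ominus\Lambda)\le 2\max(\Lambda)$ makes explicit the second term in the bound, which the paper leaves implicit.
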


\begin{proof}
The result follows directly from Theorem~\ref{thm:cbc}, noting that the
difference set $\Lambda\ominus\Lambda$ is centrally symmetric and contains
$\bszero$, and therefore
$\frac{1}{2}\sizep{(\Lambda\ominus\Lambda)\setminus\{\bszero\}} + 1 =
\frac{1}{2} (\sizep{\Lambda\ominus\Lambda} + 1)$. Alternatively, the
result for $\Lambda$ a hyperbolic cross index set can be found in
K\"ammerer \cite{Kam13}, while the result for any arbitrary index set
$\Lambda$ can be found in K\"ammerer \cite{Kam14} and Potts \& Volkmer
\cite[Theorem~2.1]{PV16}.
\end{proof}

We end this section by the very interesting property that mapping from
function values to Fourier coefficients and the other way around can be
done using a one-dimensional fast Fourier transform.

\begin{lemma}\label{lem:FFT}
  Let $\bsz$ be a generating vector for an $n$-point rank-$1$ lattice satisfying
  the reconstruction property on an arbitrary index set $\Lambda \subset \ZZ^d$
  according to Lemma~\ref{lem:fou2}.
  For a function $f \in \calF^{\rm Four}_\Lambda$ solely supported on $\Lambda$
  we can compute
  \begin{align*}
  \begin{array}{ll}
    \mbox{coefficients from function values:} \\\hline
    \text{// prepare function value vector $\bsf \in \CC^n$} \\
    \text{\textup{\texttt{for}} } i\in \{0,\ldots,n-1\} \text{\textup{\texttt{:}}}\\
    \qquad f_i = f((i\bsz \bmod{n})/n) \\
    \\[3mm]
    \text{// compute coefficient vector $\bsF \in \CC^n$} \\
    \bsF = \mathrm{FFT}(\bsf) \\[3mm]
    \text{// } \widehat{f}_\bsh \text{ is given by } F_{(\bsh\cdot\bsz \bmod{n})} \\
  \end{array}
  \quad\left|\quad
  \begin{array}{ll}
    \mbox{function values from coefficients:} \\\hline
    \text{// prepare coefficient vector $\bsF \in \CC^n$} \\
    \text{$\bsF = \bszero \in \CC^n$} \\
    \text{\textup{\texttt{for}} } \bsh \in \Lambda \text{\textup{\texttt{:}}}\\
    \qquad F_{(\bsh\cdot\bsz \bmod{n})} = \widehat{f}_\bsh \\[3mm]
    \text{// compute function value vector $\bsf \in \CC^n$} \\
    \bsf = \mathrm{IFFT}(\bsF) \\[3mm]
    \text{// } f_i \text{ gives the value of } f((i\bsz \bmod{n})/n) \\
  \end{array}\right.
  \end{align*}
  where $\bsf \in \CC^n$ is a vector containing function values
  and $\bsF \in \CC^n$ is a vector containing Fourier coefficients.
  Here $\mathrm{FFT}$ and $\mathrm{IFFT}$ are the one-dimensional fast Fourier
  transform and its inverse, respectively, with a normalization $1/n$ for $\mathrm{FFT}$
  and $1$ for $\mathrm{IFFT}$; both mappings have cost $\calO(n \log(n))$.
\end{lemma}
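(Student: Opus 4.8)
The plan is to show that both algorithmic directions are just the one-dimensional discrete Fourier transform applied to a cleverly indexed vector, exploiting the reconstruction property from Lemma~\ref{lem:fou2}. First I would observe that the reconstruction property guarantees the map $\bsh \mapsto (\bsh\cdot\bsz \bmod n)$ is injective on $\Lambda$ (this is exactly the equivalent form $\bsh\cdot\bsz \not\equiv_n \bsh'\cdot\bsz$ for $\bsh\ne\bsh'$ in $\Lambda$), so the assignment $F_{(\bsh\cdot\bsz\bmod n)} = \widehat{f}_\bsh$ is well-defined and the remaining entries of $\bsF$ being zero is consistent with $f$ being supported on $\Lambda$. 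This also forces $\size\Lambda \le n$, which is implied by the stated bound on $n$.

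For the \emph{function-values-from-coefficients} direction, I would write out $f((i\bsz\bmod n)/n) = \sum_{\bsh\in\Lambda}\widehat f_\bsh\, e^{2\pi\rmi\, i(\bsh\cdot\bsz)/n}$. Grouping by the residue $\ell := (\bsh\cdot\bsz \bmod n)$ and using the definition of $\bsF$, this equals $\sum_{\ell=0}^{n-1} F_\ell\, e^{2\pi\rmi\, i\ell/n}$, which is precisely the $i$-th entry of $\mathrm{IFFT}(\bsF)$ under the stated normalization (factor $1$, sign convention giving $e^{+2\pi\rmi\,i\ell/n}$). Hence $f_i$ produced by the algorithm equals $f((i\bsz\bmod n)/n)$, as claimed; the $\calO(n\log n)$ cost is the standard FFT cost. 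For the \emph{coefficients-from-function-values} direction, I would compute $F_\ell := (\mathrm{FFT}(\bsf))_\ell = \frac1n\sum_{i=0}^{n-1} f_i\, e^{-2\pi\rmi\, i\ell/n}$ with $f_i = f((i\bsz\bmod n)/n)$. Substituting the Fourier expansion of $f$ and swapping sums gives $F_\ell = \sum_{\bsh\in\Lambda}\widehat f_\bsh\cdot\frac1n\sum_{i=0}^{n-1} e^{2\pi\rmi\, i(\bsh\cdot\bsz-\ell)/n}$; the inner sum is the character property \eqref{eq:character-property} evaluated for the one-dimensional "lattice" with generating value $\bsh\cdot\bsz-\ell$, so it equals $1$ when $\bsh\cdot\bsz\equiv_n\ell$ and $0$ otherwise. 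By the injectivity above, for each $\ell$ there is at most one $\bsh\in\Lambda$ with $\bsh\cdot\bsz\equiv_n\ell$; taking $\ell = (\bsh\cdot\bsz\bmod n)$ we get $F_{(\bsh\cdot\bsz\bmod n)} = \widehat f_\bsh$ for every $\bsh\in\Lambda$, which is the stated output. Again the FFT realizes this in $\calO(n\log n)$ operations.

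The only real subtlety — and the step I would be most careful about — is bookkeeping the normalization and sign conventions so that "$\mathrm{FFT}$ with normalization $1/n$" and "$\mathrm{IFFT}$ with normalization $1$" are consistent with the lattice sum $\frac1n\sum_i f(\bst_i) e^{-2\pi\rmi\,\bsh\cdot\bst_i}$ being the reconstruction operator $\widehat f_\bsh^a$ of Lemma~\ref{lem:fou2}; in particular one must check that $\mathrm{FFT}$ carries the $e^{-2\pi\rmi\,\cdot}$ kernel and $\mathrm{IFFT}$ the $e^{+2\pi\rmi\,\cdot}$ kernel, matching $e_\bsh$ versus $e_{-\bsh}$, so that the two algorithms are genuine inverses. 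Everything else is a direct application of the character property \eqref{eq:character-property} and the non-aliasing condition already established in Lemma~\ref{lem:fou2}; no new estimates are needed.
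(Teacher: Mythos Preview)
Your proposal is correct and follows essentially the same approach as the paper: expand the reconstruction formula $\widehat{f}_\bsh^a = Q_n(f\,e_{-\bsh})$ from Lemma~\ref{lem:fou2}, invoke the character property \eqref{eq:character-property}, and use the non-aliasing condition to ensure that $\bsh \mapsto (\bsh\cdot\bsz \bmod n)$ is injective on $\Lambda$. The paper's proof is in fact considerably terser than yours---it simply states that the result follows from expanding $\widehat{f}_\bsh^a$ and that each $\bsh\in\Lambda$ corresponds to a unique residue by the non-aliasing condition---so your write-up, including the explicit treatment of both directions and the care about FFT/IFFT sign and normalization conventions, is more thorough than what the paper actually records.
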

\begin{proof}
  This follows from expanding the formula for $\widehat{f}_\bsh^a$
  in Lemma~\ref{lem:fou2}.
  Each $\bsh \in \Lambda$ will correspond to a unique value of $\bsh \cdot \bsz \bmod{n}$ by the
  non-aliasing condition in Lemma~\ref{lem:fou2}.
  The other Fourier coefficients are zero by the assumption that
  $f \in \calF^{\rm Four}_\Lambda$ is solely supported on $\Lambda$.
\end{proof}

\begin{remark}\label{remark:Fourier-other-kappas}
If the function $f$ has wider support in the Fourier space than just
$\Lambda$, then the vector~$\bsF$ resulting from the evaluation $\bsF =
\mathrm{FFT}(\bsf)$ will not necessarily be zero at positions $F_\kappa$
when $\kappa$ does not correspond to a value of $\bsh\cdot\bsz \bmod{n}$
for some $\bsh \in \Lambda$. This is due to the aliasing effect from
$\bsh$ outside of $\Lambda$ and this will also contaminate all other
components of $\bsF$. It is possible to extend the index set to full size
$n$ while still keeping the reconstruction property on the extended index
set such that all values in $\bsF$ can be interpreted as Fourier
coefficients. This technique has been used, e.g., in
\cite{LH03,MKS12,SSN19,SN19}.
\end{remark}

\section{Nonperiodic setting based on half-period cosines} \label{sec:cos}

\subsection{Cosine series}

The cosine basis functions are a complete and orthonormal basis for
$L^2([0,1]^d)$:
\begin{align} \label{eq:cosb}
  \cosb_\bsk(\bsx)
  \,:=\,
  \sqrt2^{|\bsk|_0} \prod_{j=1}^d \cos(\pi k_j x_j)
  ,
  \qquad
  \bsk \in \NN_0^d
  ,
\end{align}
where $|\bsk|_0$ denotes the count of the nonzero entries in the vector
$\bsk$, and we have $\langle \cosb_\bsk,\cosb_{\bsk'}\rangle =
\delta_{\bsk,\bsk'}$. The ``cosine space'' $\calF^{\rm cos}$ consists of
nonperiodic real-valued functions on $[0,1]^d$ with absolutely converging
cosine series:
$$
  \calF^{\rm cos} := \Big\{ f \in L^2 \mathbin{\Big|}  f: [0,1]^d\to\bbR,\;
    f(\bsx) = \sum_{\bsk \in \NN_0^d} \widehat{f}_\bsk \, \cosb_\bsk(\bsx)
    \text{ and }
    \sum_{\bsk \in \NN_0^d} |\widehat{f}_\bsk| < \infty
  \Big\}
$$
where the cosine coefficients are
$$
  \widehat{f}_\bsk
  \,:=\,
  \int_{[0,1]^d} f(\bsx) \, \cosb_\bsk(\bsx) \, \rmd{\bsx}
  .
$$
This space was studied for integration and approximation in
\cite{DNP14,SNC16,CKNS16}. Even though the cosine basis is a complete
orthonormal system for $L^2([0,1]^d)$, it does not allow the
representation of arbitrary polynomials.

\subsection{Cosine coefficients by cubature}

As in Section~\ref{sec:fou}, for a given finite index set
$\Lambda\subset\bbN_0^d$ we consider the subspace $\calF^{\rm
cos}_\Lambda$ of all functions whose cosine series is supported solely on
$\Lambda$, i.e.,
\begin{align} \label{eq:f2}
  \text{for } f \in \calF^{\rm cos}_\Lambda: \qquad
  f(\bsx) \,=\, \sum_{\bsk\in\Lambda} \widehat{f}_\bsk\,\cosb_\bsk(\bsx),
\end{align}
and we are interested in two related properties on the cubature formula:
\begin{itemize}
\item \textbf{Integral exactness.} We want $Q_n(f) = I(f)$ for all
    $f\in\calF^{\rm cos}_\Lambda$, which holds if and only if
\begin{align} \label{eq:exact2}
  Q_n(\cosb_\bsk) \,=\, I(\cosb_\bsk)
  \,=\, \delta_{\bsk,\bszero}
  \qquad\mbox{for all } \bsk\in\Lambda.
\end{align}

\item \textbf{Function reconstruction.} We replace each cosine
    coefficient $\widehat{f}_\bsk = \langle f,\cosb_\bsk\rangle =
    I(f\,\cosb_\bsk)$ in \eqref{eq:f2} by the cubature formula
    $\widehat{f}_\bsk^a := \langle f,\cosb_\bsk\rangle_n =
    Q_n(f\,\cosb_\bsk)$, and demand the non-aliasing condition
\begin{align*}
 \widehat{f}_\bsk^a
 &\,=\, Q_n \bigg(\bigg(\sum_{\bsk'\in\Lambda} \widehat{f}_{\bsk'}\,\cosb_{\bsk'}\bigg) \cosb_{\bsk}\bigg)
 \,=\, \sum_{\bsk'\in\Lambda} \widehat{f}_{\bsk'}\,Q_n(\cosb_{\bsk'}\,\cosb_\bsk)
 = \widehat{f}_\bsk
 \quad\mbox{for all } \bsk\in\Lambda
 \mbox{ and } f\in\calF^{\rm cos}_\Lambda,
\end{align*}
which holds if and only if
\begin{align} \label{eq:recon2}
  Q_n(\cosb_\bsk\,\cosb_{\bsk'})
  \,=\, \langle \cosb_\bsk, \cosb_{\bsk'}\rangle
  \,=\, \delta_{\bsk,\bsk'}
 \qquad\mbox{for all } \bsk,\bsk'\in\Lambda.
\end{align}
\end{itemize}

Unlike the Fourier case where a product of two basis functions is another
basis function, here the condition \eqref{eq:recon2} is not
straightforward to simplify, except when the index set $\Lambda$ is
downward closed. In the next section we will obtain necessary and
sufficient conditions for function reconstruction by connecting with the
Fourier space, without the restriction to downward closed index sets.

\subsection{Connection with the Fourier case via tent transform}

Below we will obtain sufficient conditions to achieve \eqref{eq:exact2}
and \eqref{eq:recon2} in the cosine space by utilizing a known connection
with the Fourier case via the so-called ``tent transform''(see, e.g.,
\cite{Hic02})
$$
  \tent : [0,1] \to [0,1], \qquad \tent(x) \,:=\, 1 - |2 x - 1|
  .
$$
The tent transform is a Lebesgue preserving transformation and therefore a
componentwise mapping of
\[
  \bsx' \,=\, \tent(\bsx) \,:=\, (\tent(x_1),\ldots,\tent(x_d))
\]
yields
$$
  I(f \circ \tent)
  \,=\, \int_{[0,1]^d} f(\tent(\bsx)) \,\rmd{\bsx}
  \,=\, \int_{[0,1]^d} f(\bsx') \,\rmd{\bsx'}
  \,=\, I(f).
$$
To get a sense of how this transformation works, it is informative to
consider the univariate case:
\begin{align*}
  \int_0^1 f(\tent(x))\,\rmd x
  &\,=\, \int_0^{1/2} f(2x)\,\rmd x + \int_{1/2}^1 f(2-2x)\,\rmd x \\
  &\,=\, \int_0^{1} f(x')\,(\tfrac{1}{2}\,\rmd x') + \int_1^0 f(x')\,(-\tfrac{1}{2}\,\rmd x')
  \,=\, \int_0^1 f(x')\,\rmd x'.
\end{align*}

In the following, we recall the definition of the ``mirrored'' index set
associated with the index set $\Lambda$, $\calM(\Lambda) :=
\{\bssigma(\bsk): \bsk\in\Lambda, \bssigma\in\{\pm1\}^d\} =
\bigcup_{\bsk\in\Lambda} \{\bssigma(\bsk) : \bssigma\in \calS_\bsk\}$,
where $\calS_\bsk$ is the set of all unique sign changes of $\bsk$.

\begin{lemma}[Integral exactness -- sufficiency] \label{lem:cos1-pre}
Let $\Lambda \subset \NN_0^d$ be an arbitrary index set. If a cubature
rule $Q_n^*(f) = \sum_{i=0}^{n-1} w_i^*\,f(\bst_i^*)$ integrates exactly
all Fourier basis functions $e_\bsh$ with $\bsh\in\calM(\Lambda)$, then
the cubature rule $Q_n(f) = \sum_{i=0}^{n-1} w_i\,f(\bst_i)$ with $w_i =
w_i^*$ and $\bst_i = \tent(\bst_i^*)$ integrates exactly all cosine space
functions $f \in \calF^{\rm cos}_\Lambda$ solely supported on $\Lambda$.
\end{lemma}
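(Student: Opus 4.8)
The plan is to express the tent-transformed cosine basis function $\cosb_\bsk \circ \tent$ as an explicit linear combination of Fourier basis functions $e_\bsh$ with $\bsh \in \calM(\Lambda)$, and then transfer exactness from $Q_n^*$ to $Q_n$ basis-function by basis-function. First I would establish the key identity: for $\bsk \in \NN_0^d$,
\[
  \cosb_\bsk(\tent(\bsx)) \,=\, \frac{1}{2^{|\bsk|_0}} \sum_{\bssigma\in\calS_\bsk} e_{\bssigma(\bsk)}(\bsx),
\]
i.e. the tent-transformed half-period cosine is the average of the $2^{|\bsk|_0}$ exponential functions $e_\bsh$ with $(|h_1|,\dots,|h_d|)=\bsk$ (this is the identity \eqref{eq:tent-mapped-cosb} alluded to in the introduction). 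To prove it, I would work one coordinate at a time: in dimension one, $\cos(\pi k\, \tent(x))$ for $x\in[0,\tfrac12]$ equals $\cos(2\pi k x)=\tfrac12(e^{2\pi\rmi k x}+e^{-2\pi\rmi k x})$, and for $x\in[\tfrac12,1]$ equals $\cos(\pi k(2-2x))=\cos(2\pi k x)$ again by evenness of cosine and $2\pi k$-periodicity; so $\cos(\pi k\,\tent(x)) = \tfrac12(e_k(x)+e_{-k}(x))$ on all of $[0,1]$ when $k\ne 0$, and $=1=e_0(x)$ when $k=0$. Taking the product over the $d$ coordinates and absorbing the normalization $\sqrt2^{|\bsk|_0}$ against the $\tfrac12$ factors on the nonzero coordinates yields the displayed identity, with the sum ranging exactly over $\calS_\bsk$ (the unique sign changes, since zero coordinates contribute no sign choice).

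Next I would use this identity to finish. For any $f\in\calF^{\rm cos}_\Lambda$, write $f=\sum_{\bsk\in\Lambda}\widehat f_\bsk\,\cosb_\bsk$, so by linearity $f\circ\tent = \sum_{\bsk\in\Lambda}\widehat f_\bsk\,(\cosb_\bsk\circ\tent) = \sum_{\bsk\in\Lambda}\widehat f_\bsk\, 2^{-|\bsk|_0}\sum_{\bssigma\in\calS_\bsk}e_{\bssigma(\bsk)}$, which is a finite Fourier sum supported on $\bigcup_{\bsk\in\Lambda}\{\bssigma(\bsk):\bssigma\in\calS_\bsk\} = \calM(\Lambda)$. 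Since $Q_n^*$ integrates every $e_\bsh$ with $\bsh\in\calM(\Lambda)$ exactly, and integration and $Q_n^*$ are both linear, $Q_n^*(f\circ\tent) = I(f\circ\tent)$. Now observe $Q_n(f) = \sum_i w_i f(\bst_i) = \sum_i w_i^* f(\tent(\bst_i^*)) = Q_n^*(f\circ\tent)$ directly from the definitions $w_i=w_i^*$, $\bst_i=\tent(\bst_i^*)$; and $I(f\circ\tent)=I(f)$ because the tent transform is measure-preserving (stated in the excerpt). Chaining these equalities gives $Q_n(f)=I(f)$ for all $f\in\calF^{\rm cos}_\Lambda$, which is the claim.

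The only real obstacle is the verification of the trigonometric identity on the two halves of the interval, and in particular checking that the two branches of $\tent$ produce the \emph{same} exponential expansion $\tfrac12(e_k+e_{-k})$ so that the formula holds globally on $[0,1]$ without any piecewise caveat — this is where the evenness of cosine and the $2\pi$-periodicity of $e^{2\pi\rmi k x}$ do the work, and it is genuinely a one-line computation. Everything else (multidimensional tensorization, the bookkeeping that the index multiset is exactly $\calM(\Lambda)$, and the linearity/measure-preservation chain) is routine. One small point worth stating explicitly in the write-up: the sum over $\calS_\bsk$ may contain repeated indices only if some $\bssigma(\bsk)$ coincide, which cannot happen since distinct $\bssigma\in\calS_\bsk$ flip signs only on nonzero coordinates and hence give distinct vectors; so the linear combination is genuinely supported on $\calM(\Lambda)$ and no cancellation subtlety arises.
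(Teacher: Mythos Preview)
Your proposal is correct and follows essentially the same route as the paper: both derive the expansion of $\cosb_\bsk\circ\tent$ as a linear combination of the exponentials $e_{\bssigma(\bsk)}$, $\bssigma\in\calS_\bsk$, and then invoke exactness of $Q_n^*$ on $\calM(\Lambda)$ together with linearity. The paper works basis-function by basis-function and computes $Q_n(\cosb_\bsk)=\delta_{\bsk,\bszero}$ directly, while you bundle this into the single chain $Q_n(f)=Q_n^*(f\circ\tent)=I(f\circ\tent)=I(f)$ using the measure-preserving property; these are interchangeable.

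One small slip to fix: your displayed identity has the wrong constant. From $\cosb_\bsk=\sqrt2^{|\bsk|_0}\prod_j\cos(\pi k_j x_j)$ and $\cos\theta=\tfrac12(e^{\rmi\theta}+e^{-\rmi\theta})$ on the $|\bsk|_0$ nonzero coordinates, one gets $\sqrt2^{|\bsk|_0}\cdot 2^{-|\bsk|_0}=2^{-|\bsk|_0/2}$, so
\[
  \cosb_\bsk(\tent(\bsx)) \,=\, \frac{1}{\sqrt2^{\,|\bsk|_0}} \sum_{\bssigma\in\calS_\bsk} e_{\bssigma(\bsk)}(\bsx),
\]
not $2^{-|\bsk|_0}$. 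This does not affect your argument, since you only use that $\cosb_\bsk\circ\tent$ is \emph{some} finite linear combination of $e_\bsh$ with $\bsh\in\calM(\Lambda)$, and the precise coefficients are irrelevant for transferring exactness by linearity.
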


\begin{proof}
For any $\bsk \in \bbN_0^d$ we can write
\[ 
  \cosb_\bsk(\bsx)
  \,=\,
  \sqrt2^{|\bsk|_0} \prod_{j=1}^d \cos(\pi k_j x_j)
  \,=\,
  \frac{1}{\sqrt2^{|\bsk|_0}} \sum_{\bssigma \in \calS_\bsk} \exp(\pi\,\rmi\, \bssigma(\bsk) \cdot \bsx),
\] 
which follows from expanding the product of $\cos(\theta_j) =
(e^{\rmi\theta_j}+e^{-\rmi\theta_j})/2$ for those $\theta_j\ne 0$.
Furthermore, since $\cos(\pi k\, \tent(x)) = \cos(2\pi k x)$ for all $k
\in \bbN_0$, we also have
\begin{align} \label{eq:tent-mapped-cosb}
  &\cosb_\bsk(\tent(\bsx))
  \,=\,
  \frac{1}{\sqrt2^{|\bsk|_0}} \sum_{\bssigma \in \calS_\bsk} e_{\bssigma(\bsk)}(\bsx)
  .
\end{align}
Thus if we have a cubature rule $Q^*_n$ which integrates exactly all
Fourier basis functions $e_{\bssigma(\bsk)}$ for all sign changes of
$\bsk\in \Lambda$, then
\begin{align}
  Q_n(\cosb_\bsk) \,:=\, Q^*_n(\cosb_\bsk\circ \tent)
  &\,=\,  \frac{1}{\sqrt2^{|\bsk|_0}} \sum_{\bssigma \in \calS_\bsk} Q_n^*(e_{\bssigma(\bsk)}) \label{eq:link1} \\
  &\,=\,  \frac{1}{\sqrt2^{|\bsk|_0}} \sum_{\bssigma \in \calS_\bsk} \delta_{\bssigma(\bsk),\bszero}
  \,=\,  \delta_{\bsk,\bszero}, \nonumber
\end{align}
as required for integral exactness in \eqref{eq:exact2}. The cubature rule
$Q_n$ is obtained from $Q_n^*$ by applying the tent-transform to the points.
\end{proof}

\begin{lemma}[Function reconstruction -- sufficiency]\label{lem:cos2-pre}
Let $\Lambda \subset \NN_0^d$ be an arbitrary index set. If a cubature
rule $Q_n^*(f) = \sum_{i=0}^{n-1} w_i^*\,f(\bst_i^*)$ integrates exactly
all Fourier basis functions $e_\bsh$ with $\bsh\in \calM(\Lambda) \oplus
\calM(\Lambda)$, then the cubature rule $Q_n(f) = \sum_{i=0}^{n-1}
w_i\,f(\bst_i)$ with $w_i = w_i^*$ and $\bst_i = \tent(\bst_i^*)$
reconstructs exactly the cosine coefficients of all cosine space functions
$f \in \calF^{\rm cos}_\Lambda$ solely supported on $\Lambda$.
\end{lemma}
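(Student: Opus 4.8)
The plan is to mimic the proof of Lemma~\ref{lem:cos1-pre} but applied to products of cosine basis functions, using the non-aliasing condition \eqref{eq:recon2} as the target. First I would expand the product of two cosine basis functions under the tent transform. Using \eqref{eq:tent-mapped-cosb} for each factor, we get
\[
  \cosb_\bsk(\tent(\bsx))\,\cosb_{\bsk'}(\tent(\bsx))
  \,=\, \frac{1}{\sqrt2^{|\bsk|_0+|\bsk'|_0}}
  \sum_{\bssigma\in\calS_\bsk} \sum_{\bssigma'\in\calS_{\bsk'}}
  e_{\bssigma(\bsk)+\bssigma'(\bsk')}(\bsx),
\]
since $e_\bsh\,e_{\bsh'} = e_{\bsh+\bsh'}$ in the Fourier space. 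Every index $\bssigma(\bsk)+\bssigma'(\bsk')$ appearing here lies in $\calM(\Lambda)\oplus\calM(\Lambda)$ whenever $\bsk,\bsk'\in\Lambda$, because $\bssigma(\bsk),\bssigma'(\bsk')\in\calM(\Lambda)$ by definition of the mirrored set.

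Next I would apply the hypothesis: $Q_n^*$ integrates exactly every $e_\bsh$ with $\bsh\in\calM(\Lambda)\oplus\calM(\Lambda)$, i.e., $Q_n^*(e_\bsh) = I(e_\bsh) = \delta_{\bsh,\bszero}$ on that set. Therefore, defining $Q_n(g) := Q_n^*(g\circ\tent)$ (equivalently $Q_n$ uses the tent-transformed points $\bst_i = \tent(\bst_i^*)$ with the same weights),
\[
  Q_n(\cosb_\bsk\,\cosb_{\bsk'})
  \,=\, \frac{1}{\sqrt2^{|\bsk|_0+|\bsk'|_0}}
  \sum_{\bssigma\in\calS_\bsk} \sum_{\bssigma'\in\calS_{\bsk'}}
  \delta_{\bssigma(\bsk)+\bssigma'(\bsk'),\,\bszero}.
\]
It remains to show this equals $\delta_{\bsk,\bsk'}$. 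The condition $\bssigma(\bsk)+\bssigma'(\bsk') = \bszero$ means $\sigma_j k_j = -\sigma'_j k'_j$ for every $j$; comparing absolute values forces $|k_j| = |k'_j|$ for all $j$, i.e., $\bsk = \bsk'$ since both lie in $\bbN_0^d$. Conversely, when $\bsk = \bsk'$, for each $\bssigma\in\calS_\bsk$ there is exactly one $\bssigma'\in\calS_\bsk$ with $\bssigma(\bsk)+\bssigma'(\bsk) = \bszero$, namely $\sigma'_j = -\sigma_j$ on the support of $\bsk$ (and $\sigma'_j = +1$ off the support, which is consistent with the constraint defining $\calS_\bsk$ since there $k_j = 0$). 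So the double sum collapses to $\size{\calS_\bsk} = 2^{|\bsk|_0} = \sqrt2^{|\bsk|_0+|\bsk|_0}$, and the prefactor cancels it to give $1$. Hence $Q_n(\cosb_\bsk\,\cosb_{\bsk'}) = \delta_{\bsk,\bsk'}$, which is exactly \eqref{eq:recon2}, and by the equivalence established just before the statement this gives exact reconstruction of all cosine coefficients of $f\in\calF^{\rm cos}_\Lambda$.

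I do not anticipate a serious obstacle here; the proof is essentially a bookkeeping exercise once the product-of-two-cosines expansion is in hand. The one point requiring a little care is the bilateral counting of sign-change pairs: one must check that the map $\bssigma\mapsto\bssigma'$ (flip signs on the support of $\bsk$, keep $+1$ off it) is a well-defined bijection of $\calS_\bsk$ onto itself and is the unique partner producing cancellation, which is where the precise definition of $\calS_\bsk$ (and the fact $\size{\calS_\bsk} = 2^{|\bsk|_0}$) is used. Everything else — linearity of $Q_n$, the aliasing computation already displayed in the text before the statement, and the identification of the target condition \eqref{eq:recon2} — is inherited verbatim from the surrounding discussion.
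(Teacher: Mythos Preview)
Your proposal is correct and follows essentially the same approach as the paper: expand the product $\cosb_\bsk(\tent(\bsx))\,\cosb_{\bsk'}(\tent(\bsx))$ via \eqref{eq:tent-mapped-cosb} into a double sum of exponentials $e_{\bssigma(\bsk)+\bssigma'(\bsk')}$, apply the exactness hypothesis to replace each $Q_n^*(e_{\bssigma(\bsk)+\bssigma'(\bsk')})$ by $\delta_{\bssigma(\bsk)+\bssigma'(\bsk'),\bszero}$, and then count that the only surviving pairs occur when $\bsk=\bsk'$ with $\sigma_j'=-\sigma_j$ on the support of $\bsk$. Your treatment of the sign-change bijection is slightly more explicit than the paper's, but the argument is the same.
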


\begin{proof}
For any $\bsk,\bsk'\in\bbN_0^d$ we have from \eqref{eq:tent-mapped-cosb}
that
\begin{align*}
  \cosb_\bsk(\tent(\bsx))\,\cosb_{\bsk'}(\tent(\bsx))
  &\,=\, \frac{1}{\sqrt{2}^{|\bsk|_0+|\bsk'|_0}} \sum_{\bssigma\in\calS_\bsk} \sum_{\bssigma'\in\calS_{\bsk'}}
  e_{\bssigma(\bsk)+\bssigma'(\bsk')}(\bsx).
\end{align*}
Thus if we have a cubature rule $Q^*_n$ which integrates exactly all
Fourier basis functions $e_{\bssigma(\bsk)+\bssigma'(\bsk')}$ for all sign
changes of $\bsk,\bsk'\in \Lambda$, then
\begin{align} \label{eq:link2}
  Q_n(\cosb_\bsk\,\cosb_{\bsk'}) \,:=\, Q^*_n((\cosb_\bsk\,\cosb_{\bsk'})\circ \tent)
  &\,=\, \frac{1}{\sqrt{2}^{|\bsk|_0+|\bsk'|_0}} \sum_{\bssigma\in\calS_\bsk} \sum_{\bssigma'\in\calS_{\bsk'}}
  Q_n^*(e_{\bssigma(\bsk)+\bssigma'(\bsk')}) \\
  &\,=\, \frac{1}{\sqrt{2}^{|\bsk|_0+|\bsk'|_0}} \sum_{\bssigma\in\calS_\bsk} \sum_{\bssigma'\in\calS_{\bsk'}}
  \delta_{\bssigma(\bsk)+\bssigma'(\bsk'),\bszero} \nonumber\\
  &\,=\, \frac{1}{\sqrt{2}^{|\bsk|_0+|\bsk'|_0}} \sum_{\bssigma\in\calS_\bsk}
  \sum_{\satop{\bssigma'\in\calS_{\bsk'}}{\sigma_j' = -\sigma_j \text{ when } k_j\ne 0}} \delta_{\bsk,\bsk'}
  \qquad \,=\, \delta_{\bsk,\bsk'}, \nonumber
\end{align}
which is the reconstruction property~\eqref{eq:recon2}. In the penultimate
step we used the property that $\bssigma(\bsk)+\bssigma'(\bsk') = \bszero$
if and only if $\bsk = \bsk'$ and $\sigma_j' = -\sigma_j$ whenever $k_j\ne
0$.
\end{proof}

Now we consider the situation where the cubature rule $Q_n^*$ in
Lemma~\ref{lem:cos1-pre} and Lemma~\ref{lem:cos2-pre} is a rank-1 lattice
rule \eqref{eq:lat}. In this case, the corresponding cubature rule $Q_n(f)
= Q_n^*(f \circ \tent)$ is often called a \emph{tent-transformed lattice
rule}, given explicitly by
\[ 
  Q_n (f) \,:=\, \frac{1}{n} \sum_{i=0}^{n-1} f\left(\tent\left(\frac{i\bsz \bmod n}{n}\right)\right).
\] 
The character property \eqref{eq:character-property} of lattice rules
enables us to conclude that the implications in Lemma~\ref{lem:cos1-pre}
and Lemma~\ref{lem:cos2-pre} also hold in the opposite direction, and we
obtain necessary and sufficient conditions for tent-transformed lattice
rules to achieve our desired properties. Lemma~\ref{lem:cos1} and the
``only if'' part of Lemma~\ref{lem:cos2} have not been explicitly stated
in the literature.

\begin{lemma}[Integral exactness] \label{lem:cos1}
Let $\Lambda \subset \NN_0^d$ be an arbitrary index set. A
tent-transformed lattice rule $Q_n$ of a lattice rule $Q_n^*$ with $n$
points and generating vector $\bsz$ integrates exactly all cosine space
functions $f \in \calF^{\rm cos}_\Lambda$ solely supported on $\Lambda$ if
and only if
\[
  \bsh\cdot\bsz\not\equiv_n 0
  \qquad\mbox{for all }
  \bsh \in \calM(\Lambda)\nozero.
\]
Such a generating vector $\bsz$ can be constructed component-by-component
if $n$ is a prime satisfying
$$
  n > \max\left\{ \frac{\sizep{\calM(\Lambda)\nozero}}{2} + 1 , \,
  \max(\Lambda) \right\}
  .
$$
\end{lemma}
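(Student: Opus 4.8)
The plan is to combine the sufficiency direction already established in Lemma~\ref{lem:cos1-pre} with the character property \eqref{eq:character-property} to obtain the converse, and then invoke the CBC machinery of Section~\ref{sec:cbc} for the constructive statement. First I would record the ``if'' direction: when $Q_n^*$ is the rank-$1$ lattice rule \eqref{eq:lat}, the character property gives $Q_n^*(e_\bsh) = \delta_{\bsh\cdot\bsz \equiv_n 0}$, so the hypothesis $\bsh\cdot\bsz \not\equiv_n 0$ for all $\bsh \in \calM(\Lambda)\setminus\{\bszero\}$ is exactly the statement that $Q_n^*$ integrates every Fourier basis function $e_\bsh$ with $\bsh \in \calM(\Lambda)$ (the case $\bsh = \bszero$ being $Q_n^*(e_\bszero) = 1 = I(e_\bszero)$). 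Lemma~\ref{lem:cos1-pre} then immediately yields integral exactness for $f \in \calF^{\rm cos}_\Lambda$.

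For the ``only if'' direction, I would argue contrapositively. Suppose there exists $\bsh \in \calM(\Lambda)\setminus\{\bszero\}$ with $\bsh\cdot\bsz \equiv_n 0$; write $\bsh = \bssigma(\bsk)$ for some $\bsk \in \Lambda$ and $\bssigma \in \calS_\bsk$ (so $\bsk \neq \bszero$). Starting from \eqref{eq:link1}, namely $Q_n(\cosb_\bsk) = \frac{1}{\sqrt{2}^{|\bsk|_0}} \sum_{\bssigma' \in \calS_\bsk} Q_n^*(e_{\bssigma'(\bsk)})$, I would compute each $Q_n^*(e_{\bssigma'(\bsk)})$ via the character property: it equals $1$ when $\bssigma'(\bsk)\cdot\bsz \equiv_n 0$ and $0$ otherwise. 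At least one term (the one with $\bssigma' = \bssigma$) contributes $1$, so $Q_n(\cosb_\bsk) \geq \frac{1}{\sqrt{2}^{|\bsk|_0}} > 0 = \delta_{\bsk,\bszero} = I(\cosb_\bsk)$ (all terms being nonnegative). Hence $\cosb_\bsk \in \calF^{\rm cos}_\Lambda$ is not integrated exactly, proving the contrapositive. This is the step requiring the most care — one must be sure the terms in the sum \eqref{eq:link1} cannot cancel, which is clear here because they are all $0$ or $1$, i.e.\ nonnegative, so no cancellation is possible.

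Finally, for the CBC constructibility claim, I would observe that $\calM(\Lambda)$ is fully sign symmetric, hence centrally symmetric, and contains $\bszero$. The condition ``$\bsh\cdot\bsz \not\equiv_n 0$ for all $\bsh \in \calM(\Lambda)\setminus\{\bszero\}$'' is then precisely the integral-exactness condition of Lemma~\ref{lem:fou1} applied to the index set $\calM(\Lambda)$. Invoking Lemma~\ref{lem:fou1} (equivalently Theorem~\ref{thm:cbc} with Remark~\ref{rem:cbc}) with $\kappa = 2$ on account of central symmetry, a suitable $\bsz$ can be built component-by-component whenever $n$ is prime with $n > \max\{\sizep{\calM(\Lambda)\setminus\{\bszero\}}/2 + 1, \max(\calM(\Lambda))\}$; and since $\max(\calM(\Lambda)) = \max(\Lambda)$, this is exactly the stated bound. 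I expect no genuine obstacle here beyond bookkeeping; the only subtlety worth a sentence is that the CBC construction operates on $\calM(\Lambda)$ rather than $\Lambda$ directly, which is why the cardinality $\sizep{\calM(\Lambda)}$ (not $\sizep{\Lambda}$) appears and motivates the later refinements in plans B and C.
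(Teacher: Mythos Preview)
Your proposal is correct and follows essentially the same route as the paper: the ``if'' direction via Lemma~\ref{lem:cos1-pre} and Lemma~\ref{lem:fou1}, the ``only if'' via the no-cancellation observation on \eqref{eq:link1} (the paper argues directly rather than contrapositively, but the key point---each $Q_n^*(e_{\bssigma(\bsk)})\in\{0,1\}$ so no cancellation---is identical), and the CBC claim via Theorem~\ref{thm:cbc} with $\kappa=2$ by central symmetry of $\calM(\Lambda)$. One harmless slip: $\calM(\Lambda)$ need not contain $\bszero$ unless $\bszero\in\Lambda$, but this does not affect the argument since the bound is already phrased in terms of $\calM(\Lambda)\setminus\{\bszero\}$.
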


\begin{proof}
The ``if'' direction follows by combining Lemma~\ref{lem:fou1} with
Lemma~\ref{lem:cos1-pre}. To prove the ``only if'' direction, we observe
from the character property \eqref{eq:character-property} that the terms
$Q_n^*(e_{\bssigma(\bsk)})$ on the right-hand side of \eqref{eq:link1} can
only take the values of $1$ or $0$ so there can be no cancelation. In
particular, when $\bsk\ne\bszero$, if $Q_n(\cosb_\bsk)$ is $0$ on the
left-hand side of \eqref{eq:link1} then necessarily all terms
$Q_n^*(e_{\bssigma(\bsk)})$ are $0$ on the right-hand side of
\eqref{eq:link1}, which implies $\bssigma(\bsk)\cdot\bsz\not\equiv_n 0$.
When $\bsk=\bszero$ both sides of \eqref{eq:link1} are equal to $1$, and
trivially $Q_n(\cosb_{\bszero})=1$ implies $Q_n^*(e_{\bszero})=1$. The CBC
result follows from Theorem~\ref{thm:cbc}, noting that $\calM(\Lambda)$ is
centrally symmetric.
\end{proof}

\begin{lemma}[Function reconstruction -- plan A] \label{lem:cos2}
Let $\Lambda \subset \NN_0^d$ be an arbitrary index set. A
tent-transformed lattice rule $Q_n$ of a lattice rule $Q_n^*$ with $n$
points and generating vector $\bsz$ reconstructs exactly the cosine
coefficients of all cosine space functions $f \in \calF^{\rm cos}_\Lambda$
solely supported on $\Lambda$, by
\[
  \widehat{f}_\bsk \,=\, \widehat{f}^a_\bsk \,:=\, Q_n(f\,\phi_\bsk) \,=\, Q_n^*((f\,\phi_\bsk)\circ\tent)
  \qquad\mbox{for all } \bsk\in\Lambda,
\]
if and only if
\[
  \bsh\cdot\bsz\not\equiv_n 0
  \qquad\mbox{for all }
  \bsh \in \calM(\Lambda) \oplus \calM(\Lambda)\nozero.
\]
Such a generating vector $\bsz$ can be constructed component-by-component
if $n$ is a prime satisfying
\[ 
  n > \max\left\{ \frac{\sizep{\calM(\Lambda)\oplus \calM(\Lambda)}+1}{2}, \,
  2 \max(\Lambda) \right\}
  .
\] 
\end{lemma}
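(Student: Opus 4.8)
The plan is to follow the same template used for Lemma~\ref{lem:cos1}, now building on the function-reconstruction machinery (Lemma~\ref{lem:fou2} and Lemma~\ref{lem:cos2-pre}) instead of the integral-exactness machinery. First I would establish the ``if'' direction: if $\bsh\cdot\bsz\not\equiv_n 0$ for all $\bsh\in(\calM(\Lambda)\oplus\calM(\Lambda))\setminus\{\bszero\}$, then by Lemma~\ref{lem:fou2} the lattice rule $Q_n^*$ reconstructs exactly on the index set $\calM(\Lambda)$ — here one checks that $\calM(\Lambda)\ominus\calM(\Lambda)=\calM(\Lambda)\oplus\calM(\Lambda)$ because $\calM(\Lambda)$ is centrally symmetric (indeed fully sign symmetric), so the hypothesis is exactly the non-aliasing condition of Lemma~\ref{lem:fou2} for $\calM(\Lambda)$. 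In particular $Q_n^*$ integrates exactly every $e_\bsh$ with $\bsh\in\calM(\Lambda)\oplus\calM(\Lambda)$, which is precisely the hypothesis of Lemma~\ref{lem:cos2-pre}; that lemma then delivers the reconstruction property \eqref{eq:recon2}, hence $\widehat f_\bsk^a=\widehat f_\bsk$ for all $\bsk\in\Lambda$ and all $f\in\calF^{\rm cos}_\Lambda$.

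Next I would prove the ``only if'' direction by the same no-cancellation argument as in Lemma~\ref{lem:cos1}. Suppose the tent-transformed lattice rule reconstructs exactly; then in particular \eqref{eq:recon2} holds, i.e.\ $Q_n(\cosb_\bsk\,\cosb_{\bsk'})=\delta_{\bsk,\bsk'}$ for all $\bsk,\bsk'\in\Lambda$. Looking at the identity \eqref{eq:link2}, the summands $Q_n^*(e_{\bssigma(\bsk)+\bssigma'(\bsk')})$ are each either $0$ or $1$ by the character property \eqref{eq:character-property}, so there is no cancellation; a nonnegative sum of $0$'s and $1$'s (times a positive normalization) can only equal $\delta_{\bsk,\bsk'}$ if every term that should vanish indeed does. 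Concretely, fix an arbitrary $\bsh\in(\calM(\Lambda)\oplus\calM(\Lambda))\setminus\{\bszero\}$ and write $\bsh=\bssigma(\bsk)+\bssigma'(\bsk')$ with $\bsk,\bsk'\in\Lambda$, $\bssigma\in\calS_\bsk$, $\bssigma'\in\calS_{\bsk'}$. If $\bsk\ne\bsk'$ then the left side of \eqref{eq:link2} is $0$, forcing every term, in particular $Q_n^*(e_\bsh)$, to be $0$, i.e.\ $\bsh\cdot\bsz\not\equiv_n 0$. If $\bsk=\bsk'$ then $\bsh\ne\bszero$ forces $\bssigma'\ne-\bssigma$ on the support of $\bsk$, so this term is among those forced to vanish in the penultimate line of \eqref{eq:link2} (the only surviving term corresponds to $\bssigma'=-\bssigma$, giving $\bsh=\bszero$), again yielding $Q_n^*(e_\bsh)=0$. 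The subtle point to write carefully is that an element of $\calM(\Lambda)\oplus\calM(\Lambda)$ may admit several representations $\bssigma(\bsk)+\bssigma'(\bsk')$, some with $\bsk=\bsk'$ and some not; but it suffices that \emph{one} representation forces $Q_n^*(e_\bsh)=0$, and the argument above covers both cases uniformly. This is the step I expect to need the most care, though it is entirely parallel to the only-if part of Lemma~\ref{lem:cos1}.

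For the equivalence of the displayed conditions I would note, exactly as in the Fourier case, that $\bsh\cdot\bsz\not\equiv_n 0$ for all $\bsh\in(\calM(\Lambda)\oplus\calM(\Lambda))\setminus\{\bszero\}$ is the same as $\bssigma(\bsk)\cdot\bsz\not\equiv_n\bssigma'(\bsk')\cdot\bsz$ for all distinct pairs in $\calM(\Lambda)$, i.e.\ the tent-transformed lattice points separate the characters indexed by $\calM(\Lambda)$. Finally, the CBC statement follows from Theorem~\ref{thm:cbc} applied to the index set $\calM(\Lambda)\oplus\calM(\Lambda)$: this set is centrally symmetric (it is even fully sign symmetric, being a sumset of the fully-sign-symmetric $\calM(\Lambda)$) and contains $\bszero$, so Theorem~\ref{thm:cbc} with $\kappa=2$ gives constructibility once $n$ is prime with $n>\tfrac12\#((\calM(\Lambda)\oplus\calM(\Lambda))\setminus\{\bszero\})+1=\tfrac12(\#(\calM(\Lambda)\oplus\calM(\Lambda))+1)$ and $n>\max(\calM(\Lambda)\oplus\calM(\Lambda))=2\max(\Lambda)$, matching the stated bound. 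The only genuinely new ingredient beyond Lemma~\ref{lem:cos1} is the bookkeeping in the only-if direction with the double sign-sum; everything else is a direct composition of results already in hand.
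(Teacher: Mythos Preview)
Your proposal is correct and follows essentially the same approach as the paper: the ``if'' direction via Lemma~\ref{lem:fou2}/Lemma~\ref{lem:cos2-pre}, the ``only if'' direction by the no-cancellation argument on \eqref{eq:link2} (splitting into $\bsk\ne\bsk'$ and $\bsk=\bsk'$ exactly as the paper does), and the CBC bound via Theorem~\ref{thm:cbc} with $\kappa=2$. Your remark about multiple representations of $\bsh$ is harmless but unnecessary---once any one representation forces $Q_n^*(e_\bsh)=0$ you are done, which is all the paper uses too.
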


\begin{proof}
As in the previous proof, the ``if'' direction follows by combining
Lemma~\ref{lem:fou2} with Lemma~\ref{lem:cos2-pre}. When $\bsk\ne\bsk'$,
if $Q_n(\cosb_\bsk\, \cosb_{\bsk'})=0$ on the left-hand side of
\eqref{eq:link2}, then necessarily all terms
$Q_n^*(e_{\bssigma(\bsk)+\bssigma'(\bsk')})$ are $0$ on the right-hand
side of \eqref{eq:link2}, since the only permissible values are $0$ or~$1$
due to the character property \eqref{eq:character-property}. When
$\bsk=\bsk'$, if $Q_n(\cosb_\bsk\, \cosb_\bsk)=1$ on the left-hand side
of~\eqref{eq:link2}, then we have
\begin{align*} 
  1 \,=\, Q_n(\cosb_\bsk\,\cosb_\bsk)
  &\,=\, \frac{1}{2^{|\bsk|_0}}
  \sum_{\satop{\bssigma,\bssigma'\in\calS_\bsk}{\bssigma(\bsk)+\bssigma'(\bsk)=\bszero}}
  Q_n^*(1)
  + \frac{1}{2^{|\bsk|_0}}
  \sum_{\satop{\bssigma,\bssigma'\in\calS_\bsk}{\bssigma(\bsk)+\bssigma'(\bsk)\ne\bszero}}
  Q_n^*(e_{\bssigma(\bsk)+\bssigma'(\bsk)})  \nonumber\\
  &\,=\, 1  + \frac{1}{2^{|\bsk|_0}}
  \sum_{\satop{\bssigma,\bssigma'\in\calS_\bsk}{\bssigma(\bsk)+\bssigma'(\bsk)\ne\bszero}}
  Q_n^*(e_{\bssigma(\bsk)+\bssigma'(\bsk)}).
\end{align*}
Necessarily, all terms $Q_n^*(e_{\bssigma(\bsk)+\bssigma'(\bsk)})$ must be
zero for $\bssigma(\bsk)+\bssigma'(\bsk)\ne\bszero$. Hence we conclude
that $(\bssigma(\bsk)+\bssigma'(\bsk))\cdot\bsz \not\equiv_n 0$ for all
$\bsk,\bsk',\bssigma,\bssigma'$ satisfying
$\bssigma(\bsk)+\bssigma'(\bsk')\ne\bszero$. The CBC result follows from
Theorem~\ref{thm:cbc}.
\end{proof}

The tent transformation ``stretches and folds the domain'' so that
essentially one half of the lattice points will land on top of the other
half. This is given precisely by the property that
\[
  \tent(\bst_i) \,=\, \tent(\bst_{n-i})
  \qquad\mbox{for } 1\le i < n/2.
\]
There is one point at the origin $\bst_0$ which will not be duplicated.
When $n$ is even, there is one other point $\tent(\bst_{n/2})$ which will
not be duplicated.
All other points have a multiplicity of two provided that the generating
vector includes at least one component $z_j$ such that $\gcd(n,z_j)=1$.
Typically in a CBC construction we set $z_1 = 1$. This is sufficient to
ensure uniqueness.

\begin{lemma}\label{lem:unique-points}
A rank-$1$ lattice rule with $n$ points and generating vector $\bsz \in
\ZZ_n^d$, where $\gcd(n,z_j)=1$ for some $j$, has $\lfloor n / 2 +
1\rfloor$ unique points after tent transform.
\end{lemma}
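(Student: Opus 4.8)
The plan is to count the distinct values among the tent-transformed points $\tent(\bst_i)$ for $i = 0, 1, \ldots, n-1$, using the folding identity $\tent(\bst_i) = \tent(\bst_{n-i})$ noted just above the lemma statement. First I would establish that identity carefully at the level of the generating vector: since $\bst_i = (i\bsz \bmod n)/n$ and $\tent(x) = 1 - |2x-1|$ satisfies $\tent(x) = \tent(1-x)$, componentwise we have $\tent((i z_j \bmod n)/n) = \tent(((n - i) z_j \bmod n)/n)$ because $(i z_j \bmod n) + ((n-i)z_j \bmod n) \equiv 0 \pmod n$, so the two residues are $a$ and $n - a$ (or both $0$), which $\tent$ identifies after dividing by $n$. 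Hence the map $i \mapsto \tent(\bst_i)$ factors through the pairing $i \leftrightarrow n - i$ on $\{0, 1, \ldots, n-1\}$.

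Next I would count the orbits of this pairing. The involution $i \mapsto n - i \bmod n$ on $\ZZ_n$ has fixed points exactly at $i = 0$ always, and at $i = n/2$ when $n$ is even; all other indices come in genuine pairs. So the number of orbits is $\lceil n/2 \rceil + (\text{1 if } n \text{ even, else } 0)$... more cleanly: there are $\lfloor n/2 \rfloor$ two-element orbits plus $1$ orbit $\{0\}$ when $n$ is odd, giving $\lfloor n/2\rfloor + 1$; and when $n$ is even there are $n/2 - 1$ two-element orbits plus the two singletons $\{0\}$ and $\{n/2\}$, again giving $n/2 + 1$. In both cases this equals $\lfloor n/2 + 1\rfloor = \lfloor n/2\rfloor + 1$, which matches the claimed count. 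This shows the number of distinct tent-transformed points is \emph{at most} $\lfloor n/2 \rfloor + 1$.

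The main obstacle — and the only place the hypothesis $\gcd(n, z_j) = 1$ for some $j$ is used — is the reverse inequality: showing that indices in \emph{different} orbits really do give distinct points, i.e. that there is no extra collapsing. Suppose $\tent(\bst_i) = \tent(\bst_{i'})$. Looking at the $j$-th coordinate with $\gcd(n, z_j) = 1$, we get $\tent((i z_j \bmod n)/n) = \tent((i' z_j \bmod n)/n)$, and since $\tent$ is two-to-one on $[0,1]$ with $\tent(x) = \tent(y) \iff x = y$ or $x + y = 1$, the residues $i z_j \bmod n$ and $i' z_j \bmod n$ are either equal or sum to $n$. Thus $i z_j \equiv \pm i' z_j \pmod n$, and because $z_j$ is invertible mod $n$ we may cancel it to get $i \equiv \pm i' \pmod n$, i.e. $i' = i$ or $i' = n - i$ — so $i$ and $i'$ lie in the same orbit. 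Therefore distinct orbits yield distinct points, the count is exactly $\lfloor n/2\rfloor + 1$, and the proof is complete.
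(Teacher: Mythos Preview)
Your proof is correct and complete. The paper itself does not give a proof of this lemma but simply cites \cite{SNC16}; your argument---folding via $\tent(x)=\tent(1-x)$, counting orbits of the involution $i\mapsto n-i$, and then using invertibility of $z_j$ modulo $n$ to rule out extra collisions---is exactly the standard one that reference establishes.
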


\begin{proof}
See Suryanarayana, Nuyens \& Cools \cite{SNC16}.
\end{proof}

\subsection{Alternative approach for function reconstruction}\label{sec:bi-orthonormal}

In this subsection we use an alternative approach for function
reconstruction. There are two essential ingredients, which we will
separate into \textbf{plan B} and \textbf{plan C} below. Firstly, we make
use of the tent transform and bi-orthonormality to switch to a simpler set
of functions. Secondly, we allow ``self-aliasing'' of the cubature rule to
relax bi-normality and correct for this normalization afterward. We shall
see in the next section that this alternative approach has a connection
with the method in Potts \& Volkmer \cite{PV15}.

\begin{remark}[Orthonormal and bi-orthonormal families]
For function reconstruction we demand that the inner product of all basis
functions in our support set are exactly represented by replacing the
integral by a cubature rule. An alternative is to be able to exactly
represent the inner product of all basis functions with another set of
orthogonal functions which have the bi-orthonormal property. In general,
if $\{\rmu_\bsk\}$ is an orthonormal basis with $\langle
\rmu_{\bsk},\rmu_{\bsk'} \rangle = \delta_{\bsk,\bsk'}$, and
$\{\rmv_\bsk\}$ is an orthogonal set with the bi-orthonormal property
$\langle \rmu_\bsk,\rmv_{\bsk'} \rangle = \delta_{\bsk,\bsk'}$, while
$\langle \rmv_{\bsk}, \rmv_{\bsk'} \rangle = d_\bsk\, \delta_{\bsk,\bsk'}$
with $d_\bsk$ not necessarily equal to $1$, then the coefficients of a
function $f \in \Span\{ \rmu_\bsk \}$ can be calculated by the inner
product against either $\{\rmu_\bsk\}$ or $\{\rmv_\bsk\}$ since $\langle
f, \rmu_\bsk \rangle = \langle f, \rmv_\bsk \rangle$. A cubature rule
which can exactly calculate the inner products $\langle \rmu_\bsk,
\rmv_{\bsk'} \rangle = \delta_{\bsk,\bsk'}$ for all $\bsk,\bsk'$ in our
support set then also has the reconstruction property. We will make use of
such a bi-orthonormal property below.
\end{remark}

\begin{lemma} \label{lem:dot-innerproduct}
For any $f \in \calF^{\rm cos}$ and $\bsk \in \NN_0^d$, we can write the
cosine coefficients in multiple ways
\begin{align*} 
 \widehat{f}_\bsk
 &\,=\,
 \langle f , \cosb_\bsk \rangle \nonumber \\
 &\,=\,
 \langle f \circ \tent , \cosb_\bsk \circ \tent \rangle
 \,=\,
 \langle f \circ \tent , \sqrt{2}^{|\bsk|_0} e_\bsk \rangle
 \,=\,
 \langle f \circ \tent , \sqrt{2}^{|\bsk|_0} \cos(2\pi\bsk\cdot \bigcdot) \rangle \\
 &\,=\,
 \langle f \circ \tent , \sqrt{2}^{|\bsk|_0} e_{\bssigma(\bsk)} \rangle
 \,=\,
 \langle f \circ \tent , \sqrt{2}^{|\bsk|_0} \cos(2\pi\bssigma(\bsk)\cdot\bigcdot) \rangle
 \quad\mbox{for all } \bssigma\in \calS_\bsk \nonumber
 .
\end{align*}
\end{lemma}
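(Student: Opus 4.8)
The plan is to establish the chain of equalities one link at a time, using only two facts: the Lebesgue-measure-preserving property of the tent transform (stated in Section~\ref{sec:cos}), and the pointwise identity $\cosb_\bsk(\tent(\bsx)) = \tfrac{1}{\sqrt2^{|\bsk|_0}}\sum_{\bssigma\in\calS_\bsk} e_{\bssigma(\bsk)}(\bsx)$ from \eqref{eq:tent-mapped-cosb}, together with the elementary scalar identity $\cos(\pi k\,\tent(x)) = \cos(2\pi k x)$ for $k\in\NN_0$. First I would record $\widehat{f}_\bsk = \langle f,\cosb_\bsk\rangle$ as the definition. For the second line, I would apply the change of variables $\bsx' = \tent(\bsx)$ to the integral $\int_{[0,1]^d} f(\bsx')\overline{\cosb_\bsk(\bsx')}\,\rd\bsx'$; since $\tent$ preserves Lebesgue measure componentwise, this equals $\int_{[0,1]^d} f(\tent(\bsx))\,\overline{\cosb_\bsk(\tent(\bsx))}\,\rd\bsx = \langle f\circ\tent, \cosb_\bsk\circ\tent\rangle$. (One should note that $\cosb_\bsk$ is real-valued, so the conjugate is harmless here, but it matters for the subsequent complex-exponential forms.)

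Next I would rewrite $\cosb_\bsk\circ\tent$ in the three equivalent ways that appear in the statement. Using $\cos(\pi k_j \tent(x_j)) = \cos(2\pi k_j x_j)$ in the definition \eqref{eq:cosb} gives directly $\cosb_\bsk(\tent(\bsx)) = \sqrt2^{|\bsk|_0}\prod_{j=1}^d \cos(2\pi k_j x_j) = \sqrt2^{|\bsk|_0}\cos(2\pi\bsk\cdot\bigcdot)$ evaluated at $\bsx$, which is real; hence $\langle f\circ\tent,\cosb_\bsk\circ\tent\rangle = \langle f\circ\tent, \sqrt2^{|\bsk|_0}\cos(2\pi\bsk\cdot\bigcdot)\rangle$. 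To pass to the exponential form $\sqrt2^{|\bsk|_0} e_\bsk$, I would observe that $\langle f\circ\tent, e_\bsk\rangle = \langle f\circ\tent, \cos(2\pi\bsk\cdot\bigcdot)\rangle + \rmi\langle f\circ\tent,\sin(2\pi\bsk\cdot\bigcdot)\rangle$, and the sine part integrates against $f\circ\tent$ to... — actually the cleaner route is: $f\circ\tent$ is even about each half-integer hyperplane in the sense inherited from the tent fold, so its Fourier expansion has only cosine content; concretely, since $f\circ\tent$ lies in the tent-transformed cosine space, which by \eqref{eq:tent-mapped-cosb} is spanned by $\{\sum_{\bssigma\in\calS_\bsk}e_{\bssigma(\bsk)}\}$, one checks $\langle f\circ\tent, e_{\bssigma(\bsk)}\rangle$ is the same for every $\bssigma\in\calS_\bsk$ and equals $\langle f\circ\tent,\cos(2\pi\bsk\cdot\bigcdot)\rangle$. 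That simultaneously yields the $e_{\bssigma(\bsk)}$ and $\cos(2\pi\bssigma(\bsk)\cdot\bigcdot)$ forms for all $\bssigma\in\calS_\bsk$, since $\cos(2\pi\bssigma(\bsk)\cdot\bigcdot) = \cos(2\pi\bsk\cdot\bigcdot)$ pointwise (cosine is even and the sign flips cancel coordinatewise).

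The main obstacle, such as it is, is bookkeeping the normalization factor $\sqrt2^{|\bsk|_0}$ and the conjugation consistently as one moves between the real cosine form and the complex exponential form: one must verify that $\langle f\circ\tent, \sqrt2^{|\bsk|_0}e_\bsk\rangle$ is genuinely real and equals $\widehat{f}_\bsk$, i.e. that the imaginary (sine) contribution vanishes. The quickest justification is to expand $\cos(2\pi\bsk\cdot\bsx)$ as $\tfrac{1}{2^{|\bsk|_0}}\sum_{\bssigma\in\calS_\bsk}e_{\bssigma(\bsk)}(\bsx)$ (the same expansion used to derive \eqref{eq:tent-mapped-cosb}), whence $\langle f\circ\tent,\sqrt2^{|\bsk|_0}\cos(2\pi\bsk\cdot\bigcdot)\rangle = \tfrac{1}{\sqrt2^{|\bsk|_0}}\sum_{\bssigma\in\calS_\bsk}\langle f\circ\tent, e_{\bssigma(\bsk)}\rangle$; symmetry of $f\circ\tent$ under sign changes of the argument then makes each summand equal, giving $\sqrt2^{|\bsk|_0}\langle f\circ\tent,e_{\bssigma(\bsk)}\rangle$ for any fixed $\bssigma\in\calS_\bsk$, which closes the circle. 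Everything else is a direct substitution, so the lemma follows.
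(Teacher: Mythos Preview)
Your proposal contains two genuine errors that need correcting.

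First, you write that ``$\cosb_\bsk(\tent(\bsx)) = \sqrt2^{|\bsk|_0}\prod_{j=1}^d \cos(2\pi k_j x_j) = \sqrt2^{|\bsk|_0}\cos(2\pi\bsk\cdot\bigcdot)$ evaluated at $\bsx$''. The first equality is correct, but the second is not: a product of cosines $\prod_j \cos(2\pi k_j x_j)$ is \emph{not} the same function as the cosine of a dot product $\cos(2\pi\,\bsk\cdot\bsx)$. So you cannot obtain $\langle f\circ\tent,\cosb_\bsk\circ\tent\rangle = \langle f\circ\tent,\sqrt2^{|\bsk|_0}\cos(2\pi\bsk\cdot\bigcdot)\rangle$ by a pointwise identity; the equality holds only after integration against the symmetric function $f\circ\tent$.

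Second, you claim ``$\cos(2\pi\bssigma(\bsk)\cdot\bigcdot) = \cos(2\pi\bsk\cdot\bigcdot)$ pointwise (cosine is even and the sign flips cancel coordinatewise)''. This is also false: cosine is even in its \emph{total} argument, not coordinatewise, so for instance $\cos(2\pi(-k_1x_1+k_2x_2))\ne\cos(2\pi(k_1x_1+k_2x_2))$ in general.

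Your final paragraph gestures at the right fix --- use a symmetry of $f\circ\tent$ --- but ``sign changes of the argument'' is not well defined on $[0,1]^d$. The paper's proof makes this precise: for each $\bssigma\in\calS_\bsk$, apply the change of variables $x_j' = x_j$ when $\sigma_j=+1$ and $x_j' = 1-x_j$ when $\sigma_j=-1$. Then $e_{\bssigma(\bsk)}(\bsx) = e_\bsk(\bsx')$ and, crucially, $\tent(\bsx) = \tent(\bsx')$ since $\tent(x)=\tent(1-x)$. This shows directly that $\int f(\tent(\bsx))\,e_{\bssigma(\bsk)}(\bsx)\,\rd\bsx$ is independent of $\bssigma$, which is exactly what you need. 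Combined with \eqref{eq:tent-mapped-cosb} this gives the $e_{\bssigma(\bsk)}$ form, and then the passage to $\cos(2\pi\bssigma(\bsk)\cdot\bigcdot)$ follows because $\widehat f_\bsk$ is real (as $f$ is real-valued), so one may take the real part of $e_{\bssigma(\bsk)}$.
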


\begin{proof}
Using the Lebesgue preserving property of the tent transform and
\eqref{eq:tent-mapped-cosb}, we can write
\begin{align*}
  \widehat{f}_\bsk
  \,=\,
  \int_{[0,1]^d} f(\tent(\bsx)) \, \cosb_{\bsk}(\tent(\bsx)) \,\rmd{\bsx}
  &\,=\,
  \frac{1}{\sqrt{2}^{|\bsk|_0}} \sum_{\bssigma\in\calS_\bsk}
  \int_{[0,1]^d} f(\tent(\bsx)) \, e_{\bssigma(\bsk)}(\bsx) \,\rmd{\bsx}.
\end{align*}
For each integral, we apply the change of variables $x_j' = x_j$ if
$\sigma_j = 1$ and $x_j' = 1-x_j$ if $\sigma_j = -1$, and use the
properties $\exp(2\pi\rmi\, \bssigma(\bsk)\cdot\bsx) = \exp(2\pi\rmi\,
\bsk\cdot\bssigma(\bsx)) = \exp(2\pi\rmi\, \bsk\cdot \bsx')$ and
$\tent(\bsx) = \tent(\bsx')$ to deduce that
\[
  \int_{[0,1]^d} f(\tent(\bsx)) \, e_{\bssigma(\bsk)}(\bsx) \,\rmd{\bsx}
  \,=\, \int_{[0,1]^d} f(\tent(\bsx')) \, e_\bsk(\bsx') \,\rmd{\bsx'}
  \qquad\mbox{for all } \bssigma\in \calS_\bsk.
\]
Thus all integrals are equal regardless of the sign changes on $\bsk$.
Furthermore, since $f$ is a real-valued function, all its cosine
coefficients will be real. Hence we may replace the exponential function
$e_{\bssigma(\bsk)}$ by its real part
$\cos(2\pi\bssigma(\bsk)\cdot\bigcdot)$.
\end{proof}

By considering the special case of $f = \cosb_{\bsk'}$ in
Lemma~\ref{lem:dot-innerproduct} we obtain
\begin{align} \label{eq:2inner}
 \delta_{\bsk,\bsk'} \,=\,
 \langle \cosb_{\bsk'} , \cosb_{\bsk} \rangle
 \,=\,
 \langle \cosb_{\bsk'} \circ \tent , \sqrt{2}^{|\bsk|_0} \cos(2\pi\bsk\cdot\bigcdot) \rangle
 \qquad\mbox{for all }
 \bsk,\bsk'\in\bbN_0^d
 ,
\end{align}
i.e., the functions $\{\rmu_\bsk = \phi_\bsk \circ \tent\}$ and
$\{\rmv_\bsk = \sqrt{2}^{|\bsk|_0} \cos(2\pi\bsk\cdot\bigcdot)\}$ are
bi-orthonormal in $L^2$. Instead of demanding a cubature rule with
exactness for the first inner product in \eqref{eq:2inner} (as we did in
\eqref{eq:recon2}), below we seek a cubature rule with exactness for the
second inner product in \eqref{eq:2inner} (see \eqref{eq:claim} below),
thus preserving bi-orthonormality.

\begin{lemma}[Function reconstruction -- plan B] \label{lem:plan-b}
Let $\Lambda \subset \NN_0^d$ be an arbitrary index set. A lattice rule
$Q_n^*$ with $n$ points and generating vector $\bsz$ reconstructs exactly
the cosine coefficients of all cosine space functions $f\in\calF^{\rm
cos}_\Lambda$ solely supported on $\Lambda$, by
\begin{align}
 \widehat{f}_\bsk \,=\,
 \widehat{f}_\bsk^b \label{eq:plan-b}
  &\,:=\,
  Q_n^*((f\circ \tent) \, \sqrt{2}^{|\bsk|_0} \cos(2\pi \bsk \cdot \bigcdot))
  \qquad
  \text{for all } \bsk \in \Lambda,
\end{align}
if and only if
\begin{equation} \label{eq:plan-b-iff}
 \bssigma(\bsk')\cdot\bsz \not\equiv_n \bsk\cdot\bsz
 \qquad
 \mbox{for all } \bsk,\bsk' \in \Lambda, \, \bssigma\in\calS_{\bsk'},\, \bssigma(\bsk')\ne\bsk,
\end{equation}
which is equivalent to
\begin{equation} \label{eq:plan-b-iff-2}
 \bsh\cdot\bsz \not\equiv_n 0
 \qquad
 \mbox{for all } \bsh \in (\Lambda\oplus\calM(\Lambda))\nozero.
\end{equation}
Such a generating vector $\bsz$ can be constructed component-by-component
if $n$ is a prime satisfying
\[ 
  n >
  \max\left\{ \sizep{\Lambda \oplus \calM(\Lambda)}, \,
  2 \max(\Lambda) \right\}.
\] 
\end{lemma}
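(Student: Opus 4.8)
The plan is to run the same kind of argument as for Fourier reconstruction (Lemma~\ref{lem:fou2}), but using the bi-orthonormal pair $\rmu_\bsk=\cosb_\bsk\circ\tent$ and $\rmv_\bsk=\sqrt2^{|\bsk|_0}\cos(2\pi\bsk\cdot\bigcdot)$ from \eqref{eq:2inner} instead of a single orthonormal family. First I would reduce the ``for all $f$'' statement to a statement about pairs of basis functions: writing $f=\sum_{\bsk'\in\Lambda}\widehat f_{\bsk'}\cosb_{\bsk'}$, hence $f\circ\tent=\sum_{\bsk'\in\Lambda}\widehat f_{\bsk'}\,\rmu_{\bsk'}$, linearity of $Q_n^*$ gives $\widehat f_\bsk^b=\sum_{\bsk'\in\Lambda}\widehat f_{\bsk'}\,Q_n^*(\rmu_{\bsk'}\,\rmv_\bsk)$, while Lemma~\ref{lem:dot-innerproduct} gives $\widehat f_\bsk=\langle f\circ\tent,\rmv_\bsk\rangle=\sum_{\bsk'\in\Lambda}\widehat f_{\bsk'}\langle\rmu_{\bsk'},\rmv_\bsk\rangle$. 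Since the coefficients $\widehat f_{\bsk'}$ are arbitrary (and by taking $f=\cosb_{\bsk'}$ for the converse), the reconstruction property \eqref{eq:plan-b} holds for all $f\in\calF^{\rm cos}_\Lambda$ if and only if $Q_n^*(\rmu_{\bsk'}\,\rmv_\bsk)=\langle\rmu_{\bsk'},\rmv_\bsk\rangle=\delta_{\bsk,\bsk'}$ for all $\bsk,\bsk'\in\Lambda$, the right-hand side being $\delta_{\bsk,\bsk'}$ by \eqref{eq:2inner}.

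Next I would expand the integrand into exponentials. By \eqref{eq:tent-mapped-cosb}, $\rmu_{\bsk'}=\sqrt2^{-|\bsk'|_0}\sum_{\bssigma\in\calS_{\bsk'}}e_{\bssigma(\bsk')}$, and $\rmv_\bsk=\tfrac12\sqrt2^{|\bsk|_0}(e_\bsk+e_{-\bsk})$; multiplying out, $\rmu_{\bsk'}\rmv_\bsk$ is a $\pm$-combination of the exponentials $e_{\bssigma(\bsk')\pm\bsk}$ over $\bssigma\in\calS_{\bsk'}$. Applying the character property \eqref{eq:character-property}, each $Q_n^*(e_{\bssigma(\bsk')\pm\bsk})$ equals $0$ or $1$, so no cancellation can occur. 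I would then check the two elementary facts that drive the conclusion: $\bssigma(\bsk')\pm\bsk=\bszero$ forces $\bsk=\bsk'$ (using $\bsk,\bsk'\in\NN_0^d$), and for $\bsk=\bsk'\neq\bszero$ exactly two of these exponentials reduce to $e_\bszero$, namely the ones coming from $\bssigma$ being $-1$ (resp.\ $+1$) on the support of $\bsk$. Matching $Q_n^*(\rmu_{\bsk'}\rmv_\bsk)$ against the integral value $\delta_{\bsk,\bsk'}$: when $\bsk\ne\bsk'$ all terms must vanish, and when $\bsk=\bsk'$ the two forced $e_\bszero$-terms already produce the full value $1$, so by nonnegativity every remaining $Q_n^*(e_\bsh)$ with $\bsh\ne\bszero$ must vanish. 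This yields the criterion: $(\bssigma(\bsk')\pm\bsk)\cdot\bsz\not\equiv_n 0$ whenever $\bsk,\bsk'\in\Lambda$, $\bssigma\in\calS_{\bsk'}$ and $\bssigma(\bsk')\pm\bsk\ne\bszero$.

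Finally I would simplify this to the stated forms. Since $\{\bssigma(\bsk'):\bssigma\in\calS_{\bsk'}\}$ is centrally symmetric, the ``$+$'' conditions are the ``$-$'' conditions after replacing $\bssigma$ by the sign pattern flipped on the support of $\bsk'$, together with $\bsv\cdot\bsz\not\equiv_n0\iff(-\bsv)\cdot\bsz\not\equiv_n0$; hence it suffices to keep the ``$-$'' family, which is exactly \eqref{eq:plan-b-iff}. Taking the union over $\bsk,\bsk'\in\Lambda$ and $\bssigma\in\calS_{\bsk'}$, the vectors $\bssigma(\bsk')-\bsk$ run (up to overall sign) through $\Lambda\oplus\calM(\Lambda)$, and the excluded cases $\bssigma(\bsk')=\bsk$ are precisely $\bsh=\bszero$; this gives \eqref{eq:plan-b-iff-2}. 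For the CBC claim I would invoke Theorem~\ref{thm:cbc} with auxiliary set $\Lambda\oplus\calM(\Lambda)$: it always contains $\bszero$ (take any $\bsk\in\Lambda$ together with its all-negative mirror), so $\sizep{(\Lambda\oplus\calM(\Lambda))\nozero}+1=\sizep{\Lambda\oplus\calM(\Lambda)}$; it is not centrally symmetric in general, so the relevant constant is $\kappa=1$; and $\max(\Lambda\oplus\calM(\Lambda))\le 2\max(\Lambda)$. Together these give the bound $n>\max\{\sizep{\Lambda\oplus\calM(\Lambda)},\,2\max(\Lambda)\}$.

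I expect the main obstacle to be the bookkeeping in the middle step: correctly pinning down, in the diagonal case $\bsk=\bsk'$, exactly which exponentials $e_{\bssigma(\bsk)\pm\bsk}$ collapse to $e_\bszero$, and arguing via the character property that the remaining terms are forced to contribute $0$ so that no spurious self-aliasing survives; and then the central-symmetry reduction that turns the two-sided condition into the single clean condition \eqref{eq:plan-b-iff-2} on $\Lambda\oplus\calM(\Lambda)$.
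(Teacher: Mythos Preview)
Your proposal is correct and follows essentially the same route as the paper: reduce to $Q_n^*(\rmu_{\bsk'}\rmv_\bsk)=\delta_{\bsk,\bsk'}$, expand via \eqref{eq:tent-mapped-cosb} and $\cos(2\pi\bsk\cdot\bigcdot)=(e_\bsk+e_{-\bsk})/2$, and use the character property to force all non-$e_\bszero$ terms to vanish. The only cosmetic difference is ordering: the paper applies the sign-flip $\bssigma(\bsk')\mapsto-\bssigma(\bsk')$ directly in the expansion (so both exponential families become $e_{\pm(\bssigma(\bsk')-\bsk)}$ and the diagonal exception is the single $\bssigma$ with $\bssigma(\bsk)=\bsk$), whereas you keep the $\pm$ families separate, identify the two $e_\bszero$ terms in the diagonal case, and perform the central-symmetry reduction afterwards; the CBC invocation of Theorem~\ref{thm:cbc} with $\kappa=1$ is identical.
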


\begin{proof}
Substituting the cosine series of $f$ into~\eqref{eq:plan-b}, it follows
that we have exact reconstruction of the cosine coefficients, i.e.,
\begin{align*}
  \widehat{f}_\bsk^b
  \,=\, \sum_{\bsk' \in \Lambda} \widehat{f}_{\bsk'}\,
    Q_n^*((\cosb_{\bsk'}\circ\tent)\, \sqrt{2}^{|\bsk|_0} \cos(2\pi\bsk\cdot\bigcdot))
  \,=\, \widehat{f}_\bsk
  \quad
  \text{for all } f\in\calF^{\rm cos}_\Lambda \mbox{ and } \bsk \in \Lambda,
\end{align*}
if and only if
\begin{align} \label{eq:claim}
  Q_n^*((\cosb_{\bsk'}\circ\tent) \, \sqrt{2}^{|\bsk|_0} \cos(2\pi \bsk \cdot \bigcdot))
  \,=\,
  \delta_{\bsk',\bsk}
  \qquad
  \text{for all } \bsk, \bsk' \in \Lambda.
\end{align}
It remains to prove that \eqref{eq:claim} holds if and only if
\eqref{eq:plan-b-iff} holds.

Using \eqref{eq:tent-mapped-cosb} and $\cos(2\pi\bsk\cdot\bigcdot) =
(e_\bsk + e_{-\bsk})/2$, we can write
\begin{align} \label{eq:link-b}
  Q_n^*((\cosb_{\bsk'}\circ\tent) \, \sqrt{2}^{|\bsk|_0} \cos(2\pi \bsk \cdot \bigcdot))
  &\,=\, \frac{\sqrt{2}^{|\bsk|_0}}{2\cdot \sqrt{2}^{|\bsk'|_0}} \sum_{\bssigma\in\calS_{\bsk'}}
  \left(Q_n^*(e_{\bssigma(\bsk')+\bsk}) + Q_n^*(e_{\bssigma(\bsk')-\bsk})\right) \nonumber\\
  &\,=\, \frac{\sqrt{2}^{|\bsk|_0}}{2\cdot \sqrt{2}^{|\bsk'|_0}} \sum_{\bssigma\in\calS_{\bsk'}}
  \left(Q_n^*(e_{-\bssigma(\bsk')+\bsk}) + Q_n^*(e_{\bssigma(\bsk')-\bsk})\right),
\end{align}
where it is valid to replace one $\bssigma(\bsk')$ by $-\bssigma(\bsk')$
since we sum over all unique sign changes.

Consider first the case $\bsk \ne\bsk'$. Then $\bssigma(\bsk')\ne \bsk$.
By the character property \eqref{eq:character-property} we know that
$Q_n^*(e_{\pm(\bssigma(\bsk')-\bsk)})$ can only take the values of $1$ or
$0$. Thus \eqref{eq:link-b} is equal to~$0$ if and only if all terms
$Q_n^*(e_{\pm(\bssigma(\bsk')-\bsk)})$ are~$0$, which holds following the
character property if and only if $ \bssigma(\bsk')\cdot\bsz \not\equiv_n
\bsk\cdot\bsz$.

Consider now the case $\bsk = \bsk'$. Then we can rewrite
\eqref{eq:link-b} as
\begin{align} \label{eq:link-b2}
  Q_n^*((\cosb_{\bsk}\circ\tent) \, \sqrt{2}^{|\bsk|_0} \cos(2\pi \bsk \cdot \bigcdot))
  &\,=\, 1
  + \frac{1}{2} \sum_{\satop{\bssigma\in\calS_\bsk}{\bssigma(\bsk)\ne\bsk}}
  \Big(Q_n^*(e_{-\bssigma(\bsk)+\bsk}) + Q_n^*(e_{\bssigma(\bsk)-\bsk})\Big).
\end{align}
Using the character property as before, we conclude that
\eqref{eq:link-b2} is equal to~$1$ if and only if all terms
$Q_n^*(e_{\pm(\bssigma(\bsk)-\bsk)})$ are $0$ whenever
$\bssigma(\bsk)\ne\bsk$, and in turn this means that
$\bssigma(\bsk)\cdot\bsz \not\equiv_n \bsk\cdot\bsz$ except for when
$\bssigma(\bsk)=\bsk$. Combining all conditions, we conclude that
\eqref{eq:claim} holds if and only if \eqref{eq:plan-b-iff} holds.

Finally \eqref{eq:plan-b-iff} is clearly equivalent to
\eqref{eq:plan-b-iff-2}. The condition on $n$ then follows from
Theorem~\ref{thm:cbc}, noting that the set $\Lambda \oplus \calM(\Lambda)$
includes $\bszero$ but is not centrally symmetric.
\end{proof}

In the next lemma we propose another modification which allows
``self-aliasing'' in the lattice rule with respect to sign changes (see
$\bssigma(\bsk)\cdot\bsz \equiv_n \bsk\cdot\bsz$ in \eqref{eq:plan-c-ck}
below). Consequently, the right-hand side of \eqref{eq:claim} for the case
$\bsk = \bsk'$ can be an integer $c_\bsk$, not necessarily $1$ (see
\eqref{eq:claim-ck} below). In other words, the cubature rule no longer
preserves bi-normality, with normalization to be corrected by this factor
$c_\bsk$.

\begin{lemma}[Function reconstruction -- plan C] \label{lem:plan-c}
Let $\Lambda \subset \NN_0^d$ be an arbitrary index set. A lattice rule
$Q_n^*$ with $n$ points and generating vector $\bsz$ reconstructs exactly
the cosine coefficients of all cosine space functions $f\in\calF^{\rm
cos}_\Lambda$ solely supported on $\Lambda$, by
\begin{align}
 \widehat{f}_\bsk \,=\,
 \widehat{f}_\bsk^c \nonumber 
  &\,:=\,
\frac{Q_n^*((f\circ \tent) \, \sqrt{2}^{|\bsk|_0} \cos(2\pi \bsk \cdot \bigcdot))}{c_\bsk}, \quad\mbox{with} \\
  c_\bsk \label{eq:plan-c-ck}
  &\,:=\,
  \size{ \big\{ \bssigma\in\calS_\bsk \;:\;
  \bssigma(\bsk)\cdot\bsz  \equiv_n \bsk \cdot \bsz \big\}}
  \quad
  \text{for all } \bsk \in \Lambda,
\end{align}
if and only if
\begin{equation} \label{eq:plan-c-iff}
  \bssigma(\bsk')\cdot\bsz \not\equiv_n \bsk\cdot\bsz
  \qquad\mbox{for all }
  \bsk,\bsk' \in \Lambda, \, \bssigma\in\calS_{\bsk'},\, \bsk\ne \bsk'.
\end{equation}
Such a generating vector $\bsz$ can be constructed component-by-component
if $n$ is a prime satisfying
\[ 
  n >
  \max\left\{ \size{\Lambda}\, \size{\calM(\Lambda)}, \,
  2 \max(\Lambda) \right\}.
\] 
\end{lemma}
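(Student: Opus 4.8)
The plan is to mirror the proof of Lemma~\ref{lem:plan-b} almost line for line; the only genuine change is in the diagonal case $\bsk=\bsk'$, where ``self-aliasing'' is now tolerated and absorbed into the constant $c_\bsk$. First I would note that the trivial (all $+1$) sign change always belongs to $\calS_\bsk$ and satisfies $\bssigma(\bsk)\cdot\bsz\equiv_n\bsk\cdot\bsz$, so $c_\bsk\ge 1$ and the quotient defining $\widehat{f}_\bsk^c$ is well defined. Substituting the cosine series $f=\sum_{\bsk'\in\Lambda}\widehat{f}_{\bsk'}\cosb_{\bsk'}$ and using linearity of $Q_n^*$, exact reconstruction $\widehat{f}_\bsk^c=\widehat{f}_\bsk$ for every $f\in\calF^{\rm cos}_\Lambda$ and every $\bsk\in\Lambda$ is equivalent to
\[
  Q_n^*\bigl((\cosb_{\bsk'}\circ\tent)\,\sqrt2^{\,|\bsk|_0}\cos(2\pi\bsk\cdot\bigcdot)\bigr)\;=\;c_\bsk\,\delta_{\bsk,\bsk'}\qquad\text{for all }\bsk,\bsk'\in\Lambda ,
\]
so it remains to prove that this is equivalent to \eqref{eq:plan-c-iff}.

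Next I would re-use the expansion \eqref{eq:link-b} verbatim (it does not use the diagonal assumption) and split into two cases. For $\bsk\ne\bsk'$: since $\bsk,\bsk'$ have nonnegative entries and $\bssigma\in\calS_{\bsk'}$, the equality $\bssigma(\bsk')=\bsk$ forces $\bsk=\bsk'$, so here $\bssigma(\bsk')-\bsk\ne\bszero$ and, by the character property \eqref{eq:character-property}, each $Q_n^*(e_{\pm(\bssigma(\bsk')-\bsk)})$ is $0$ or $1$; with no cancellation possible, \eqref{eq:link-b} vanishes iff all these terms are $0$, i.e.\ iff $\bssigma(\bsk')\cdot\bsz\not\equiv_n\bsk\cdot\bsz$ for every $\bssigma\in\calS_{\bsk'}$ --- exactly \eqref{eq:plan-c-iff} for this pair, and the required value $c_\bsk\delta_{\bsk,\bsk'}=0$ matches. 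For $\bsk=\bsk'$, \eqref{eq:link-b} collapses as in \eqref{eq:link-b2} to $1+\tfrac12\sum_{\bssigma\in\calS_\bsk:\,\bssigma(\bsk)\ne\bsk}\bigl(Q_n^*(e_{-\bssigma(\bsk)+\bsk})+Q_n^*(e_{\bssigma(\bsk)-\bsk})\bigr)$; by the character property each summand pair contributes $1+1=2$ when $\bssigma(\bsk)\cdot\bsz\equiv_n\bsk\cdot\bsz$ and $0$ otherwise, so the whole expression equals $\#\{\bssigma\in\calS_\bsk:\bssigma(\bsk)\cdot\bsz\equiv_n\bsk\cdot\bsz\}=c_\bsk$ by \eqref{eq:plan-c-ck}, with no further constraint on $\bsz$. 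Combining the two cases yields the stated ``if and only if''.

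For the CBC part I would read \eqref{eq:plan-c-iff} as the requirement that, among the $\le\size{\Lambda}$ residues $\bsk\cdot\bsz\bmod n$ ($\bsk\in\Lambda$) and the $\le\size{\calM(\Lambda)}$ residues $\bssigma(\bsk')\cdot\bsz\bmod n$ ($\bsk'\in\Lambda$, $\bssigma\in\calS_{\bsk'}$), a residue of the first kind may coincide with one of the second kind only when the underlying indices agree; thus at each component at most $\size{\Lambda}\,\size{\calM(\Lambda)}$ congruences have to be avoided. Feeding this into the plan-C-specific CBC argument (Theorem~\ref{thm:cbc-plan-c}) --- whose hypotheses hold once $n$ is prime with $n>2\max(\Lambda)$, so that a coincidence modulo $n$ is a genuine coincidence of components, and $n>\size{\Lambda}\,\size{\calM(\Lambda)}$ --- produces a suitable generating vector $\bsz$ component by component.

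The step I expect to be the main obstacle is the diagonal case: one must verify carefully that permitting self-aliasing (that is, dropping, relative to \eqref{eq:plan-b-iff}, the demand $\bssigma(\bsk)\cdot\bsz\not\equiv_n\bsk\cdot\bsz$ for non-trivial $\bssigma$) reproduces \emph{exactly} the normalising factor $c_\bsk$ and imposes \emph{no} extra condition on $\bsz$, which is precisely what makes the weaker condition \eqref{eq:plan-c-iff} sufficient. The only other non-routine ingredient is the plan-C form of the CBC estimate, which must be established in the tailored shape used above.
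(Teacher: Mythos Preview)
Your proposal is correct and follows essentially the same route as the paper: reduce to the bi-orthonormality identity $Q_n^*((\cosb_{\bsk'}\circ\tent)\,\sqrt2^{\,|\bsk|_0}\cos(2\pi\bsk\cdot\bigcdot))=c_\bsk\,\delta_{\bsk,\bsk'}$, treat $\bsk\ne\bsk'$ exactly as in plan~B, handle the diagonal by counting via the character property to get $c_\bsk$, and defer the CBC claim to Theorem~\ref{thm:cbc-plan-c}. The only cosmetic difference is that the paper does the diagonal count directly from \eqref{eq:link-b} without first splitting off the identity sign change as in \eqref{eq:link-b2}, but the arithmetic is identical.
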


\begin{proof}
Following the argument in the proof of Lemma~\ref{lem:plan-b}, we now have
exact reconstruction of the cosine coefficients if and only if (instead of
\eqref{eq:claim})
\begin{align} \label{eq:claim-ck}
  Q_n^*((\cosb_{\bsk'}\circ\tent) \, \sqrt{2}^{|\bsk|_0} \cos(2\pi \bsk \cdot \bigcdot))
  \,=\,
  c_\bsk\,\delta_{\bsk',\bsk}
  \qquad
  \text{for all } \bsk, \bsk' \in \Lambda.
\end{align}
The case $\bsk\ne\bsk'$ is the same as in Lemma~\ref{lem:plan-b}. It
suffices to reconsider the case $\bsk = \bsk'$. Instead of separating out
the term $\bssigma(\bsk)=\bsk$ as in \eqref{eq:link-b2}, we apply the
character property \eqref{eq:character-property} for
$Q_n^*(e_{\pm(\bssigma(\bsk')-\bsk)})$ in \eqref{eq:link-b} with
$\bsk=\bsk'$ to arrive at
\begin{align*}
  Q_n^*((\cosb_{\bsk}\circ\tent) \, \sqrt{2}^{|\bsk|_0} \cos(2\pi\bsk \cdot \bigcdot))
  &\,=\,
  \frac{1}{2} \sum_{\satop{\bssigma\in\calS_\bsk}{\bssigma(\bsk)\cdot\bsz \equiv_n \bsk\cdot\bsz}} 2,
\end{align*}%
which is equal to $c_\bsk$ as required. The CBC result is proved in
Theorem~\ref{thm:cbc-plan-c} later.
\end{proof}

\subsection{Fast calculation of cosine coefficients and function values}

Here we can also make use of a one-dimensional fast Fourier transform to
map cosine coefficients to function values on the tent-transformed lattice
points and vice versa.

\begin{lemma} \label{lem:DCT-cos}
  Let $\bsz$ be a generating vector for an $n$-point rank-$1$ lattice satisfying
  the reconstruction property on an arbitrary index set $\Lambda \subset \NN_0^d$
  according to Lemma~\ref{lem:cos2} \textup{(}plan A\textup{)},
  Lemma~\ref{lem:plan-b} \textup{(}plan B\textup{)} or
  Lemma~\ref{lem:plan-c} \textup{(}plan C\textup{)}.
  For a function $f \in \calF^{\rm cos}_\Lambda$
  solely supported on $\Lambda$ we can compute
  \begin{align*}
  \begin{array}{ll}
    \mbox{coefficients from function values:} \\\hline
    \text{// prepare function value vector $\bsf \in \RR^n$} \\
    f_0 = f(\bszero)  \\
    \text{\textup{\texttt{for}} } i\in \{1,\ldots,\lfloor{n/2}\rfloor\}\text{\textup{\texttt{:}}} \\
    \quad f_i = f(\tent((i\bsz \bmod{n})/n))  \\
    \quad f_{n-i} = f_i \\[3mm]
    \text{// compute coefficient vector $\bsF \in \RR^n$} \\
    \bsF = \mathrm{FFT}(\bsf) \\[3mm]
    \text{// } \widehat{f}_\bsk \text{ is given by } \sqrt{2}^{|\bsk|_0} \, F_{(\bsk\cdot\bsz \bmod{n})} / c_\bsk \\
  \end{array}
  \;\left|\;
  \begin{array}{ll}
    \mbox{function values from coefficients:} \\\hline
    \text{// prepare coefficient vector $\bsF \in \RR^n$} \\
    \text{$\bsF = \bszero \in \RR^n$} \\
    \text{\textup{\texttt{for}} } \bsk \in \Lambda \text{\textup{\texttt{:}}}\\
    \quad \text{\textup{\texttt{for}} } \bssigma \in S_\bsk \text{\textup{\texttt{:}}}\\
    \quad\quad F_{(\bssigma(\bsk)\cdot\bsz \bmod{n})} = F_{(\bssigma(\bsk)\cdot\bsz \bmod{n})} + \widehat{f}_\bsk / \sqrt{2}^{|\bsk|_0} \\[3mm]
    \text{// compute function value vector $\bsf \in \RR^n$} \\
    \bsf = \mathrm{IFFT}(\bsF) \\[3mm]
    \text{// } f_i \text{ gives the value of } f(\tent((i\bsz \bmod{n})/n)) \\
  \end{array}\right.
  \end{align*}
  where $\bsf \in \RR^n$ is a vector containing function values with the symmetry
  $f_i = f_{n-i}$ and $\bsF \in \RR^n$ is a vector containing cosine coefficients
  with the symmetry $F_\kappa = F_{n-\kappa}$.
  Here $\mathrm{FFT}$ and $\mathrm{IFFT}$ are the one-dimensional fast Fourier
  transform and its inverse, respectively, with a normalization $1/n$ for $\mathrm{FFT}$
  and $1$ for $\mathrm{IFFT}$;
  both mappings have cost $\calO(n \log(n))$.
  For plan A and B we set $c_\bsk = 1$ for all $\bsk$.

  Alternatively, for even $n = 2m$ a length $m+1$ DCT-I can be used, while for odd $n = 2m-1$ a length $m$ DCT-V can be used.
  In this case the memory requirement and computational effort is halved
  (w.r.t.\ a real-to-real FFT).
\end{lemma}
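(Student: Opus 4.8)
The plan is to verify the two algorithms directly and then to observe that each performs a single length-$n$ FFT. Write $\bst_i^* = (i\bsz \bmod n)/n$ for the lattice points, so that $Q_n^*(g) = \frac1n\sum_{i=0}^{n-1}g(\bst_i^*)$, and recall from~\eqref{eq:tent-mapped-cosb} that $\cosb_\bsk\circ\tent = \sqrt2^{-|\bsk|_0}\sum_{\bssigma\in\calS_\bsk} e_{\bssigma(\bsk)}$. The first ingredient I would establish is the symmetry $\tent(\bst_i^*) = \tent(\bst_{n-i}^*)$ for $1\le i<n$: each coordinate of $\bst_{n-i}^*$ is either $0$ (exactly when the corresponding coordinate of $\bst_i^*$ is) or $1$ minus that of $\bst_i^*$, and $\tent(1-x)=\tent(x)$, $\tent(0)=0$. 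Hence the prepared vector $\bsf$ genuinely satisfies $f_i = f_{n-i}$, which both justifies the mirrored assignment in the left algorithm and forces $\bsF=\mathrm{FFT}(\bsf)$ to be real and symmetric; moreover, since $\bsk\cdot\bsz\in\ZZ$ the odd (sine) part of the exponential cancels against the symmetry of $\bsf$, giving $F_{(\bsk\cdot\bsz\bmod n)} = \frac1n\sum_{i=0}^{n-1} f(\tent(\bst_i^*))\cos(2\pi\bsk\cdot\bst_i^*) = Q_n^*\bigl((f\circ\tent)\cos(2\pi\bsk\cdot\bigcdot)\bigr)$.

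For the forward direction (coefficients from values) I would multiply this identity by $\sqrt2^{|\bsk|_0}$ and recognise $\sqrt2^{|\bsk|_0}F_{(\bsk\cdot\bsz\bmod n)} = Q_n^*\bigl((f\circ\tent)\,\sqrt2^{|\bsk|_0}\cos(2\pi\bsk\cdot\bigcdot)\bigr)$, which is precisely $\widehat f_\bsk^b$ of~\eqref{eq:plan-b} and, for plan~C, precisely $c_\bsk\,\widehat f_\bsk^c$; Lemma~\ref{lem:plan-b} (resp.\ Lemma~\ref{lem:plan-c}) under its reconstruction hypothesis then yields $\widehat f_\bsk = \sqrt2^{|\bsk|_0}F_{(\bsk\cdot\bsz\bmod n)}/c_\bsk$ with $c_\bsk=1$ for plan~B. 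The step I expect to be the main obstacle is plan~A, because the reconstruction formula of Lemma~\ref{lem:cos2} is stated for $Q_n(f\,\cosb_\bsk)$, which differs from the single-frequency quantity the FFT produces. I would resolve this by noting that $\Lambda\subseteq\calM(\Lambda)$, so the plan-A condition of Lemma~\ref{lem:cos2} implies the plan-B condition of Lemma~\ref{lem:plan-b}, whence the identity above applies unchanged; and the plan-A condition also forces $\bssigma(\bsk)\cdot\bsz\not\equiv_n\bsk\cdot\bsz$ whenever $\bssigma(\bsk)\ne\bsk$ (since $\bssigma(\bsk)-\bsk\in(\calM(\Lambda)\oplus\calM(\Lambda))\nozero$), so the plan-C count $c_\bsk$ equals $1$ here, consistent with the prescribed $c_\bsk=1$. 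I would also observe that in all three plans the values $\bsk\cdot\bsz\bmod n$, $\bsk\in\Lambda$, are pairwise distinct (apply the reconstruction condition to $\bsk-\bsk'$), so no frequency bin is shared.

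For the backward direction (values from coefficients) no hypothesis on $\bsz$ is needed: expanding $f(\tent(\bst_i^*)) = \sum_{\bsk\in\Lambda}\widehat f_\bsk\,\cosb_\bsk(\tent(\bst_i^*))$ with~\eqref{eq:tent-mapped-cosb} and $e_{\bssigma(\bsk)}(\bst_i^*) = e^{2\pi\rmi\,i(\bssigma(\bsk)\cdot\bsz)/n}$ gives $f_i = \sum_{\kappa=0}^{n-1} F_\kappa\,e^{2\pi\rmi\,i\kappa/n} = \mathrm{IFFT}(\bsF)_i$, where $\bsF$ is obtained by accumulating $\widehat f_\bsk/\sqrt2^{|\bsk|_0}$ into entry $\bssigma(\bsk)\cdot\bsz\bmod n$ over all $\bsk\in\Lambda$ and $\bssigma\in\calS_\bsk$, exactly as in the right algorithm; since $\{\bssigma(\bsk):\bssigma\in\calS_\bsk\}$ is closed under negation, $\bsF$, and hence $\bsf$, inherit the symmetry $F_\kappa=F_{n-\kappa}$, $f_i=f_{n-i}$.

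Each direction uses one length-$n$ FFT, so both mappings cost $\calO(n\log n)$. For the final remark I would invoke the standard reduction of the DFT of a real even vector to a cosine transform: a real symmetric vector of length $n$ is determined by its first $\lfloor n/2\rfloor+1$ entries, and the map from these to the corresponding entries of its (real symmetric) transform is a length-$(m+1)$ DCT-I when $n=2m$ and a length-$m$ DCT-V when $n=2m-1$, halving both storage and work; here I would cite a standard reference on discrete cosine transforms rather than rederive these identities. The bookkeeping of the normalisations ($\sqrt2^{|\bsk|_0}$, the factor $c_\bsk$, and the $1/n$ on the FFT versus $1$ on the IFFT) is the only other place requiring care, but it is routine once the identities above are in place.
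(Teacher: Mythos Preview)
Your proposal is correct and follows essentially the same route as the paper: establish the symmetry $f_i=f_{n-i}$ from $\tent(\bst_i^*)=\tent(\bst_{n-i}^*)$, use it to replace the cosine by the full exponential so the sum becomes a length-$n$ DFT, then invoke the nesting of the reconstruction conditions (plan~A $\Rightarrow$ plan~B $\Rightarrow$ plan~C) together with Lemmas~\ref{lem:plan-b}--\ref{lem:plan-c} to read off $\widehat f_\bsk$; the backward direction is the same expansion via~\eqref{eq:tent-mapped-cosb} as in the paper. The only noticeable difference is that the paper writes out the DCT-I and DCT-V formulas explicitly from the symmetric sum (and then cites Martucci~\cite{Mar94} for their names), whereas you propose to cite a reference directly; since the lemma asserts specific DCT types and lengths, you should be prepared to exhibit the two half-length cosine sums as the paper does rather than leave that identification entirely to the literature.
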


\begin{proof}
  We show the result by using the calculation of Lemma~\ref{lem:plan-c} (plan C).
  Plan A and plan B are essentially the same with $c_\bsk = 1$.
  For all plans we have the option to use the inner product
  with respect to $\cos(2\pi\,i\,\bsk\cdot\bsz/n)$ as given in Lemma~\ref{lem:dot-innerproduct}
  since the reconstruction property is guaranteed
  by the conditions of plan C which is a subset of the conditions of plan B which in turn
  is a subset of the conditions of plan A.

Let $f_i := f(\tent((i\bsz \bmod{n})/n))$ for $i=0,\ldots,n-1$. In all
three plans we have for $\bsk\in\Lambda$,
  \begin{align*}
    \widehat{f}_\bsk
    =
    \frac{\sqrt{2}^{|\bsk|_0}}{c_\bsk} \,
    \frac1n \sum_{i=0}^{n-1}
      f_i\,
      \cos\Big(2\pi \, \frac{i\, \bsk\cdot\bsz}{n}\Big)
    &=
    \frac{\sqrt{2}^{|\bsk|_0}}{c_\bsk} \,
    \frac1n \sum_{i=0}^{n-1}
      f_i \,
      \left[ \cos\Big(2\pi \, \frac{i\, \bsk\cdot\bsz}{n}\Big)
       + \rmi \, \sin\Big(2\pi \, \frac{i\, \bsk\cdot\bsz}{n}\Big) \right]
    \\
    &=
    \frac{\sqrt{2}^{|\bsk|_0}}{c_\bsk} \,
    \underbrace{\frac1n \sum_{i=0}^{n-1}
      f_i\,
      \exp\Big(\!-2\pi\rmi \, \frac{i\, \bsk\cdot\bsz}{n}\Big)}_{=:\, F_{(\bsk\cdot\bsz\bmod{n})}}
      ,
  \end{align*}
which follows because of the symmetry of $f_i = f_{n-i}$ due to the tent
transform and the odd and even properties of the sine and cosine functions
respectively. The last expression is a scaled discrete Fourier transform
of length $n$ in terms of $i$ and $\kappa = \bsk \cdot \bsz \bmod{n}$.
This shows the calculation of coefficients from function values by
one-dimensional FFT.

Next we consider the evaluation of function values from coefficients. For
each $i=0,\ldots,n-1$, we have from \eqref{eq:tent-mapped-cosb} that
\begin{align*}
  f_i
  &\,=\, \sum_{\bsk\in\Lambda} \widehat{f}_\bsk \,\frac{1}{\sqrt{2}^{|\bsk|_0}}
  \sum_{\bssigma\in S_\bsk} \exp\Big(2\pi\rmi\, \frac{i\,\bssigma(\bsk)\cdot\bsz}{n}\Big)
  \,=\, \sum_{\kappa=0}^{n-1} \bigg(\underbrace{\sum_{\bsk\in\Lambda}
  \sum_{\satop{\bssigma\in S_\bsk}{\bssigma(\bsk)\cdot\bsz\equiv_n \kappa} }
  \widehat{f}_\bsk \,\frac{1}{\sqrt{2}^{|\bsk|_0}}}_{=:\,F_\kappa} \bigg)
  \exp\Big(2\pi\rmi\, \frac{i\,\kappa}{n}\Big),
\end{align*}
where it is unfortunately not possible to avoid considering all sign
changes of $\bsk$. For plan A and plan B there is only one $\kappa$
associated with each $\sigma(\bsk)$. For plan C it might occur that
different sign changes on the same $\bsk$ map to the same value of
$\kappa$, hence the summation in the algorithm to prepare the coefficient
vector $\bsF$.

Now we explain how to make use of DCT using symmetry.
  We have $f_i = f_{n-i}$ due to the symmetry of $\tent$.
  In the formula below, we will write $f_{n/2}$ which is to be interpreted in the
  way we just defined for even $n$, and to be considered equal to zero for odd $n$.
  Then, by making use of the symmetry, we have for general $n$ (odd or even)
  \begin{align*}
    \widehat{f}_\bsk
    &=
    \frac{\sqrt{2}^{|\bsk|_0}}{c_\bsk} \,
    \frac1n \bigg(
      f_0
      +
      2 \sum_{i=1}^{\lfloor (n-1)/2 \rfloor} f_i \cos\Big(2\pi \, \frac{i\,\kappa}{n}\Big)
      +
      f_{n/2} \, \cos(\pi \, \kappa)
    \bigg)
    ,
  \end{align*}
  where $f_{n/2} \, \cos(\pi \, \kappa)$ is only present for even $n$.
  Now for $n = 2m$ we have $\lfloor (n-1)/2 \rfloor = n/2 - 1 = m-1$ and we find
  \begin{align*}
    \widehat{f}_\bsk
    &=
    \frac{\sqrt{2}^{|\bsk|_0}}{c_\bsk} \,F_\kappa, \qquad\mbox{with}
    \qquad
    F_\kappa :=
    \frac1m \bigg(
      \frac12 \, f_0
      +
      \sum_{i=1}^{m-1} f_i\, \cos\Big(\pi \, \frac{i\,\kappa}{m}\Big)
      +
      \frac12 \, f_m \, \cos(\pi \, \kappa)
    \bigg)
    ,
  \end{align*}
  which is the formula for the one-dimensional DCT-I of length $m+1$ to turn the sequence $f_0,f_1,\ldots,f_m$ into the sequence $F_0,F_1,\ldots,F_m$.
  For odd $n = 2m-1$ we have $\lfloor (n-1)/2 \rfloor = m-1$ and we find
  \begin{align*}
    \widehat{f}_\bsk
    &=
    \frac{\sqrt{2}^{|\bsk|_0}}{c_\bsk} \,F_\kappa, \qquad\mbox{with}
    \qquad
    F_\kappa :=
    \frac1{2m-1} \bigg(
      f_0
      +
      2 \sum_{i=1}^{m-1} f_i \cos\Big(2 \pi \, \frac{i\,\kappa}{2m-1}\Big)
    \bigg)
    ,
  \end{align*}
which is the formula for the one-dimensional DCT-V of length $m$ to turn
the sequence $f_0,f_1$, \ldots, $f_{m-1}$ into the sequence
$F_0,F_1,\ldots,F_{m-1}$. For DCT-I and DCT-V see Martucci \cite{Mar94}
(definitions (A.1) and (A.5) therein).

Similarly, we have $F_\kappa = F_{n-\kappa}$ so we can write
\begin{align*}
  f_i
  &\,=\, F_0 + 2 \sum_{\kappa=1}^{\lfloor(n-1)/2\rfloor} F_\kappa\, \cos\Big(2\pi \frac{i\,\kappa}{n}\Big)
  + F_{n/2}\, \cos(\pi\,i),
\end{align*}
where $F_{n/2} := 0$ if $n$ is odd. Therefore DCT-I and DCT-V work in an
analogous way.
\end{proof}

We note that all coefficients calculated by the FFT are real because of the symmetry in the input, but in a typical implementation one would take the real part in case of a complex FFT routine to remove possible numerical noise. Alternatively one can make use of a special real
to real FFT implementation or use a specific implementation for the corresponding DCT.

Similarly to Remark~\ref{remark:Fourier-other-kappas}, one can also extend
the set $\Lambda$ such that $\sigma(\bsk)\cdot\bsz \bmod{n}$ covers as
many values as possible in $\ZZ_n$ for functions which are not solely
supported on $\Lambda$.

\section{Nonperiodic setting based on Chebyshev polynomials} \label{sec:che}

\subsection{Chebyshev series}

In the univariate case, the Chebyshev polynomials of the first kind for
$|x| \le 1$ can be written
\[ 
  T_k(x)
  \,=\,
  \cos(k \arccos(x))
  ,
  \qquad
  k = 0, 1, 2, \ldots,
  \qquad
  \text{for } x \in [-1, 1]
  .
\] 
We have orthogonality with respect to the Chebyshev weight $(\sqrt{1 -
x^2})^{-1}$:
\begin{align*}
  \int_{-1}^1 T_k(x) \, T_{k'}(x) \, \frac{\rd x}{\sqrt{1 - x^2}}
  &=
  \begin{cases}
    0, & \text{if $k \ne k'$}, \\
    \pi, & \text{if $k = k' = 0$}, \\
    \pi/2, & \text{if $k = k' \ne 0$} .
  \end{cases}
\end{align*}
To obtain an orthonormal basis on $[-1,1]$, we first normalize the measure
to 1 by adjusting the Chebyshev weight to $(\pi \sqrt{1-x^2})^{-1}$. Then
we define
\begin{align*}
  \cheb_k(x)
  :=
  \begin{cases}
    T_0(x) = 1,         & \text{if $k = 0$}, \\
    \sqrt{2} \, T_k(x), & \text{if $k = 1,2,\ldots$}\,,
  \end{cases}
\end{align*}
and for the multivariate case we define the tensor product basis functions
\begin{align} \label{eq:cheb}
  \cheb_\bsk(\bsx)
  :=
  \prod_{j=1}^d \cheb_{k_j}(x_j)
  =
  \sqrt{2}^{|\bsk|_0} \prod_{j=1}^d T_{k_j}(x_j),
\end{align}
where, as for cosine basis, $|\bsk|_0$ denotes the count of the nonzero
entries in the vector $\bsk$.

Let $\calF^{\rm Cheb}$ denote the space of real valued functions defined
on $[-1,1]^d$ with absolutely converging Chebyshev series:
$$
  \calF^{\rm Cheb}
  :=
  \Bigl\{ f \in L^2 \mathbin{\Big|} f : [-1,1]^d \to \RR,\;
  f(\bsx) = \sum_{\bsk \in \NN_0^d} \widehat{f}_\bsk \, \cheb_\bsk(\bsx)
  \text{ and } \sum_{\bsk \in \NN_0^d} |\widehat{f}_\bsk| < \infty \Bigr\},
$$
where $\widehat{f}_\bsk$ are the Chebyshev coefficients of $f$. We equip
$\calF^{\rm Cheb}$ with the weighted $L^2$ inner product
\[ 
 \langle f_1, f_2 \rangle_\mu
 \,:=\,
 \int_{[-1,1]^d} f_1(\bsx) \, f_2(\bsx) \, \mu(\rd\bsx),
 \qquad
 \mu(\rd\bsx) \,:=\, \frac{\rmd{\bsx}}{\prod_{j=1}^d \left(\pi \sqrt{1-x_j^2}\right)}.
\] 
The orthonormal Chebyshev basis functions satisfy
$\langle\cheb_\bsk,\cheb_{\bsk'}\rangle_\mu = \delta_{\bsk,\bsk'}$. The
Chebyshev coefficients are
  \begin{align*} 
    \widehat{f}_\bsk
    :=
    \langle f, \cheb_\bsk \rangle_\mu
    =
    \int_{[-1,1]^d} f(\bsx) \, \cheb_\bsk(\bsx) \, \mu(\rd\bsx).
\end{align*}

Given an arbitrary finite index set $\Lambda\subset\bbN_0^d$, we consider
the subspace $\calF^{\rm Cheb}_\Lambda$ of all functions whose Chebyshev
series is supported solely on $\Lambda$, i.e.,
\[ 
  \text{for } f \in \calF^{\rm Cheb}_\Lambda: \qquad
  f(\bsx)\,=\, \sum_{\bsk\in\Lambda} \widehat{f}_\bsk\,\cheb_\bsk(\bsx).
\] 

\subsection{Isomorphism with the cosine space via cosine transform}

The Chebyshev basis functions and the cosine basis functions are related
by the mapping
\[
  \cheb_\bsk(\bsx) = \cosb_\bsk(\bsx')
  \quad\iff\quad
  \arccos(\bsx) = \pi\,\bsx'
  \quad\iff\quad
  \bsx = \cos(\pi\,\bsx'),
\]
where the cosine function and its inverse are applied componentwise. This
provides an isomorphism between the Chebyshev setting and the cosine
space, with
\[
  f\in\calF^{\rm Cheb} \quad\iff\quad f_{\rm cos} \,:=\, f(\cos(\pi\,\bigcdot)) \,\in\calF^{\rm cos}.
\]
To get a sense of how this transformation works, it is informative to
consider the univariate case:
\begin{align*}
  \int_{-1}^1 f(x)\,\frac{\rmd x}{\pi\,\sqrt{1-x^2}}
  \,=\, \int_1^0 f(\cos(\pi x'))\,\frac{-\pi\,\sin (\pi x')\,\rmd x'}{\pi\,\sqrt{1-\cos^2(\pi x')}}
  \,=\, \int_0^1 f(\cos(\pi x'))\,\rmd x'.
\end{align*}

For the multivariate case we have the integral operator
\[
  I_\mu(f)
  \,:=\, \int_{[-1,1]^d} f(\bsx)\, \mu(\rmd\bsx)
  \,=\, \int_{[0,1]^d} f(\cos(\pi\,\bsx'))\,\rmd\bsx'
  \,=\, I(f(\cos(\pi\,\bigcdot))) \,=\, I(f_{\rm cos}),
\]
so that $I_\mu(\cheb_\bsk) = I(\cosb_\bsk)$,
$I_\mu(\cheb_\bsk\,\cheb_{\bsk'}) = I(\cosb_\bsk\,\cosb_{\bsk'})$, and
\[
  \langle f,\cheb_\bsk\rangle_\mu
  = \langle f(\cos(\pi\,\bigcdot)),\cosb_\bsk\rangle
  = \langle f_{\rm cos},\cosb_\bsk\rangle,
\]
i.e., the Chebyshev coefficients of $f$ are precisely the cosine
coefficients of $f_{\rm cos}$.

\subsection{Chebyshev coefficients by transformed rank-1 lattice rules}

The isomorphism between the spaces means that we can bring all results
from the cosine space over to the Chebyshev space. Noting the useful
property
\[ 
  \cos(\pi\,\tent(\bst)) \,=\, \cos(2\pi\,\bst),
\] 
we then arrive at the results for a \emph{tent-transformed and then
cosine-transformed lattice rule}, which is given explicitly by
\[
  Q_n(f) \,:=\,
  \frac{1}{n} \sum_{i=0}^{n-1} f\left(\cos\left(2\pi\frac{i\bsz\bmod n}{n}\right)\right)
  \,=\,
  \frac{1}{n} \sum_{i=0}^{n-1} f\left(\cos\left(2\pi\frac{i\bsz}{n}\right)\right)
  .
\]
Since $\cos(2\pi\,i\bsz/n) = \cos(2\pi\,(n-i)\bsz/n)$, the cubature points
double up and we can write
\[
  Q_n(f) \,=\,
  \begin{cases}
  \displaystyle
  \frac{f(\bsone)}{n}  + \frac{f(-\bsone)}{n}
  + \frac{2}{n} \sum_{i=1}^{n/2-1} f\left(\cos\left(2\pi\frac{i\bsz}{n}\right)\right)
  & \mbox{if $n$ is even}, \\
  \displaystyle
  \frac{f(\bsone)}{n}
  + \frac{2}{n} \sum_{i=1}^{(n-1)/2} f\left(\cos\left(2\pi\frac{i\bsz}{n}\right)\right)
  & \mbox{if $n$ is odd}.
  \end{cases}
\]
Thus the cubature rule can be computed with $\lfloor n/2 + 1 \rfloor$
function evaluations, where the cubature weight for $i=0$ and $i=n/2$ (if
$n$ is even) are $1/n$ and the others are $2/n$. In general there can be
further duplication of points. However, if $\gcd(z_j,n) = 1$ for at least
one $j=1,\ldots,d$, then all the $\lfloor n/2 + 1 \rfloor$ points are
distinct, see Lemma~\ref{lem:unique-points}.

For even $n=2m$ this point set has previously been called a ``Chebyshev
lattice'', see, e.g., \cite{CP11,PV15}, defined by $\bigl\{ \cos(\pi i\bsz
/ m) : i = 0, \ldots, m \bigr\}$. However, we prefer the interpretation as
a tent-transformed and then cosine-transformed lattice, since then we can
also use odd $n$, and the cubature weights are automatically correct
according to the multiplicity of the points.

We now state the analogous results to Lemmas~\ref{lem:cos1},
\ref{lem:cos2}, \ref{lem:plan-b}, \ref{lem:plan-c}, \ref{lem:DCT-cos} from
the cosine space.

\begin{lemma}[Integral exactness] \label{lem:che1}
Let $\Lambda \subset \NN_0^d$ be an arbitrary index set. A
tent-transformed and then cosine transformed lattice rule with $n$ points
and generating vector $\bsz$ integrates exactly (against the Chebyshev
density) all Chebyshev space functions $f \in \calF^{\rm Cheb}_\Lambda$
solely supported on $\Lambda$ if and only if
\[
  \bsh\cdot\bsz\not\equiv_n 0
  \qquad\mbox{for all }
  \bsh \in \calM(\Lambda)\nozero.
\]
Such a generating vector $\bsz$ can be constructed component-by-component
if $n$ is a prime satisfying
$$
  n > \max\left\{ \frac{\sizep{\calM(\Lambda)\nozero}}{2} + 1 , \,
  \max(\Lambda) \right\}
  .
$$
\end{lemma}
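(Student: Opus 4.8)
The plan is to transfer Lemma~\ref{lem:cos1} from the cosine space to the Chebyshev space through the isomorphism set up in Section~\ref{sec:che}, so that the proof becomes essentially bookkeeping. First I would record the three facts that make the transfer automatic. (i) The map $f \mapsto f_{\rm cos} := f(\cos(\pi\,\bigcdot))$ is a bijection between $\calF^{\rm Cheb}_\Lambda$ and $\calF^{\rm cos}_\Lambda$, because it sends the Chebyshev series of $f$ to the cosine series of $f_{\rm cos}$ term by term (via $\cheb_\bsk(\cos(\pi\,\bsx')) = \cosb_\bsk(\bsx')$), hence preserves the support index set $\Lambda$. (ii) $I_\mu(f) = I(f_{\rm cos})$, by the change of variables $\bsx = \cos(\pi\,\bsx')$ already carried out in Section~\ref{sec:che}. (iii) The tent-transformed and then cosine-transformed lattice rule applied to $f$ is exactly the tent-transformed lattice rule (in the cosine-space sense) applied to $f_{\rm cos}$: using $\cos(\pi\,\tent(t)) = \cos(2\pi t)$ and the $1$-periodicity of $t\mapsto e^{2\pi\rmi t}$,
\[
  \frac{1}{n}\sum_{i=0}^{n-1} f_{\rm cos}\!\left(\tent\!\left(\tfrac{i\bsz\bmod n}{n}\right)\right)
  \,=\, \frac{1}{n}\sum_{i=0}^{n-1} f\!\left(\cos\!\left(\tfrac{2\pi\,i\bsz}{n}\right)\right)
  \,=\, Q_n(f).
\]

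With (i)--(iii) in hand, the assertion ``$Q_n(f) = I_\mu(f)$ for all $f \in \calF^{\rm Cheb}_\Lambda$'' is word-for-word the assertion that the cosine-space tent-transformed lattice rule integrates exactly every $g \in \calF^{\rm cos}_\Lambda$. I would then invoke Lemma~\ref{lem:cos1} directly: it yields the necessary and sufficient condition $\bsh\cdot\bsz \not\equiv_n 0$ for all $\bsh\in\calM(\Lambda)\nozero$, and it yields the component-by-component constructibility under the stated primality and size bound on $n$ --- the bound carries over verbatim since it only involves $\calM(\Lambda)$, which is unaffected by the cosine transform.

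The argument has no real obstacle; the one step deserving care is (iii), namely verifying that reducing $i\bsz$ modulo $n$ before applying the map $i\mapsto\cos(2\pi\,i\bsz/n)$ changes nothing, that the cubature weights match (both equal $1/n$ before any duplication of points is accounted for), and that the cosine-transformed lattice points genuinely are the $\cos(\pi\,\bigcdot)$-images of the tent-transformed lattice points. An alternative, self-contained route would mimic the proof of Lemma~\ref{lem:cos1}: after the cosine transform, expand $\cheb_\bsk$ into exponentials $e_{\bssigma(\bsk)}$ using $T_k(\cos\theta)=\cos(k\theta)$ together with $\cos(\pi k\,\tent(x)) = \cos(2\pi k x)$, and then appeal to the character property~\eqref{eq:character-property} exactly as in Lemma~\ref{lem:cos1-pre}; but routing through the already-proven cosine statement is cleaner, and I would present it that way.
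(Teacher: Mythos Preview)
Your proposal is correct and follows exactly the paper's approach: the paper does not give a separate proof of Lemma~\ref{lem:che1} at all, but simply states that the isomorphism $f\mapsto f_{\rm cos}=f(\cos(\pi\,\bigcdot))$ carries all cosine-space results (Lemmas~\ref{lem:cos1}--\ref{lem:DCT-cos}) over to the Chebyshev space verbatim. Your items (i)--(iii) spell out precisely the bookkeeping the paper leaves implicit, so your write-up is in fact more detailed than the original.
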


\begin{lemma}[Function reconstruction -- plan A] \label{lem:che2}
Let $\Lambda \subset \NN_0^d$ be an arbitrary index set. A lattice rule
$Q_n^*$ with $n$ points and generating vector $\bsz$ reconstructs exactly
the Chebyshev coefficients of all Chebyshev space functions $f \in
\calF^{\rm Cheb}_\Lambda$ solely supported on $\Lambda$, by
\begin{align*}
 \widehat{f}_\bsk \,=\,
 \widehat{f}_\bsk^a 
  &\,:=\,
  Q_n^*\left(f(\cos(2\pi\,\bigcdot)) \, (\cosb_\bsk\circ\tent)\right)
  \qquad
  \text{for all } \bsk \in \Lambda,
\end{align*}
if and only if
\[
  \bsh\cdot\bsz\not\equiv_n 0
  \qquad\mbox{for all }
  \bsh \in \calM(\Lambda) \oplus \calM(\Lambda)\nozero.
\]
Such a generating vector $\bsz$ can be constructed component-by-component
if $n$ is a prime satisfying
\[ 
  n > \max\left\{ \frac{\sizep{\calM(\Lambda)\oplus \calM(\Lambda)}+1}{2}, \,
  2 \max(\Lambda) \right\}
\] 
\end{lemma}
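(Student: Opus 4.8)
The plan is to deduce everything from the corresponding cosine-space result, Lemma~\ref{lem:cos2}, by transporting it through the isomorphism $f\mapsto f_{\rm cos}:=f(\cos(\pi\,\bigcdot))$ between $\calF^{\rm Cheb}$ and $\calF^{\rm cos}$ established in the previous subsection. The first step is pure bookkeeping: since $\cheb_\bsk(\bsx)=\cosb_\bsk(\bsx')$ under $\bsx=\cos(\pi\,\bsx')$, a function $f$ has its Chebyshev series supported on $\Lambda$ if and only if $f_{\rm cos}$ has its cosine series supported on $\Lambda$, and moreover the Chebyshev coefficients of $f$ coincide with the cosine coefficients of $f_{\rm cos}$; hence $f\in\calF^{\rm Cheb}_\Lambda \iff f_{\rm cos}\in\calF^{\rm cos}_\Lambda$.

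The second step is to verify that the reconstruction functional in the statement is exactly the cosine-space plan-A functional applied to $f_{\rm cos}$. Using the identity $\cos(\pi\,\tent(\bst))=\cos(2\pi\,\bst)$ one has
\[
  (f_{\rm cos}\,\cosb_\bsk)\circ\tent
  \,=\, f\bigl(\cos(\pi\,\tent(\bigcdot))\bigr)\,(\cosb_\bsk\circ\tent)
  \,=\, f(\cos(2\pi\,\bigcdot))\,(\cosb_\bsk\circ\tent),
\]
so that $Q_n^*\bigl((f_{\rm cos}\,\cosb_\bsk)\circ\tent\bigr)=Q_n^*\bigl(f(\cos(2\pi\,\bigcdot))\,(\cosb_\bsk\circ\tent)\bigr)=\widehat{f}_\bsk^a$, which is precisely the plan-A estimator of the cosine coefficient of $f_{\rm cos}$ appearing in Lemma~\ref{lem:cos2}. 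Therefore exact reconstruction of all Chebyshev coefficients of every $f\in\calF^{\rm Cheb}_\Lambda$ is equivalent to exact reconstruction of all cosine coefficients of every $f_{\rm cos}\in\calF^{\rm cos}_\Lambda$.

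The third step is simply to invoke Lemma~\ref{lem:cos2}: the cosine-space reconstruction holds if and only if $\bsh\cdot\bsz\not\equiv_n 0$ for all $\bsh\in(\calM(\Lambda)\oplus\calM(\Lambda))\nozero$, and the stated bound on the prime $n$ is exactly the one guaranteeing the component-by-component construction there, which in turn comes from Theorem~\ref{thm:cbc} applied to the centrally symmetric set $\calM(\Lambda)\oplus\calM(\Lambda)$ (which contains $\bszero$). There is essentially no serious obstacle: the only point requiring care is confirming that the two reconstruction functionals genuinely match under the change of variables, and that matching rests entirely on $\cos(\pi\,\tent(\bst))=\cos(2\pi\,\bst)$, which is also what makes the tent-then-cosine-transformed lattice points coincide with the tent-transformed lattice points composed with $\cos(\pi\,\bigcdot)$. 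Consequently no new lattice-theoretic argument (in particular, no fresh appeal to the character property) is needed beyond what is already contained in Lemma~\ref{lem:cos2}.
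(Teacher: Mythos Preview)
Your proposal is correct and follows exactly the paper's approach: the paper states the Chebyshev-space lemmas without separate proofs, remarking only that the isomorphism $f\mapsto f_{\rm cos}=f(\cos(\pi\,\bigcdot))$ together with the identity $\cos(\pi\,\tent(\bst))=\cos(2\pi\,\bst)$ carries all cosine-space results over directly. Your explicit verification that the reconstruction functional $\widehat{f}_\bsk^a$ in the statement coincides with $Q_n^*\bigl((f_{\rm cos}\,\cosb_\bsk)\circ\tent\bigr)$ is precisely the bookkeeping the paper leaves implicit.
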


\begin{lemma}[Function reconstruction -- plan B] \label{lem:che-plan-b}
Let $\Lambda \subset \NN_0^d$ be an arbitrary index set. A lattice rule
$Q_n^*$ with $n$ points and generating vector $\bsz$ reconstructs exactly
the Chebyshev coefficients of all Chebyshev space functions
$f\in\calF^{\rm Cheb}_\Lambda$ solely supported on $\Lambda$, by
\begin{align*}
 \widehat{f}_\bsk \,=\,
 \widehat{f}_\bsk^b 
  &\,:=\,
  Q_n^*\left(f(\cos(2\pi\,\bigcdot)) \, \sqrt{2}^{|\bsk|_0} \cos(2\pi \bsk \cdot \bigcdot)\right)
  \qquad
  \text{for all } \bsk \in \Lambda,
\end{align*}
if and only if
\[ 
 \bssigma(\bsk')\cdot\bsz \not\equiv_n \bsk\cdot\bsz
 \qquad
 \mbox{for all } \bsk,\bsk' \in \Lambda, \, \bssigma\in\calS_{\bsk'},\, \bssigma(\bsk')\ne\bsk,
\] 
which is equivalent to
\[ 
 \bsh\cdot\bsz \not\equiv_n 0
 \qquad
 \mbox{for all } \bsh \in (\Lambda\oplus\calM(\Lambda))\nozero.
\] 
Such a generating vector $\bsz$ can be constructed component-by-component
if $n$ is a prime satisfying
\[ 
  n >
  \max\left\{ \sizep{\Lambda \oplus \calM(\Lambda)}, \,
  2 \max(\Lambda)\right\}.
\] 
\end{lemma}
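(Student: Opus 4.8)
The plan is to obtain Lemma~\ref{lem:che-plan-b} as a direct transcription of its cosine-space counterpart, Lemma~\ref{lem:plan-b}, through the isomorphism between $\calF^{\rm Cheb}$ and $\calF^{\rm cos}$ set up above. Recall that $f\mapsto f_{\rm cos}:=f(\cos(\pi\,\bigcdot))$ is a bijection from $\calF^{\rm Cheb}_\Lambda$ onto $\calF^{\rm cos}_\Lambda$ which identifies the Chebyshev coefficients of $f$ with the cosine coefficients of $f_{\rm cos}$, on the same index set $\Lambda$ and with the same values. So the reconstruction property claimed here holds on $\Lambda$ if and only if the plan-B reconstruction property of Lemma~\ref{lem:plan-b} holds on $\Lambda$ in the cosine space, \emph{provided} the Chebyshev plan-B functional $\widehat{f}_\bsk^b$ in the statement is exactly the cosine plan-B functional of Lemma~\ref{lem:plan-b} evaluated at $f_{\rm cos}$.

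That identification is the one computational step. Writing $\bst_i^*:=(i\bsz\bmod n)/n$ for the lattice points of $Q_n^*$, the cosine plan-B functional of Lemma~\ref{lem:plan-b} applied to $f_{\rm cos}$ is
\[
 Q_n^*\bigl((f_{\rm cos}\circ\tent)\,\sqrt{2}^{|\bsk|_0}\cos(2\pi\bsk\cdot\bigcdot)\bigr)
 \,=\, \frac1n\sum_{i=0}^{n-1} f_{\rm cos}(\tent(\bst_i^*))\,\sqrt{2}^{|\bsk|_0}\cos(2\pi\bsk\cdot\bst_i^*).
\]
Applying the identity $\cos(\pi\,\tent(\bst))=\cos(2\pi\,\bst)$ componentwise gives $f_{\rm cos}(\tent(\bst_i^*))=f(\cos(\pi\,\tent(\bst_i^*)))=f(\cos(2\pi\,\bst_i^*))$, so the right-hand side equals $Q_n^*\bigl(f(\cos(2\pi\,\bigcdot))\,\sqrt{2}^{|\bsk|_0}\cos(2\pi\bsk\cdot\bigcdot)\bigr)=\widehat{f}_\bsk^b$, the Chebyshev functional in the statement. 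The test function $\sqrt{2}^{|\bsk|_0}\cos(2\pi\bsk\cdot\bigcdot)$ is literally the same object in both settings, since it carries no transform; only the data function is transported.

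Putting the pieces together: as $f$ ranges over $\calF^{\rm Cheb}_\Lambda$, the function $f_{\rm cos}$ ranges over $\calF^{\rm cos}_\Lambda$, and the coefficients match, so the stated reconstruction property is equivalent to plan-B reconstruction in the cosine space on $\Lambda$. By Lemma~\ref{lem:plan-b} this holds if and only if \eqref{eq:plan-b-iff} holds, equivalently \eqref{eq:plan-b-iff-2}, and the CBC condition on the prime $n$ is inherited verbatim; no new condition on $\bsz$ or $n$ arises. The only real obstacle is the bookkeeping in the middle step — matching the tent-then-cosine-transformed lattice points on the Chebyshev side with the tent-transformed points on the cosine side via $\cos(\pi\,\tent(\bst))=\cos(2\pi\,\bst)$, and confirming that the reconstruction test function is unchanged; everything else is immediate from the isomorphism and Lemma~\ref{lem:plan-b}.
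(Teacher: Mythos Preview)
Your proposal is correct and follows exactly the paper's approach: the paper does not give a separate proof for Lemma~\ref{lem:che-plan-b} but simply states it as the Chebyshev-space analogue of Lemma~\ref{lem:plan-b}, obtained via the cosine-transform isomorphism $f\mapsto f_{\rm cos}=f(\cos(\pi\,\bigcdot))$ and the identity $\cos(\pi\,\tent(\bst))=\cos(2\pi\,\bst)$. Your explicit verification that the plan-B functional transports correctly under this isomorphism is precisely the ``one computational step'' implicit in the paper's transfer of results from Section~\ref{sec:cos} to Section~\ref{sec:che}.
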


\begin{lemma}[Function reconstruction -- plan C] \label{lem:che-plan-c}
Let $\Lambda \subset \NN_0^d$ be an arbitrary index set. A lattice rule
$Q_n^*$ with $n$ points and generating vector $\bsz$ reconstructs exactly
the Chebyshev coefficients of all Chebyshev space functions
$f\in\calF^{\rm Cheb}_\Lambda$ solely supported on $\Lambda$, by
\begin{align}
 \widehat{f}_\bsk \,=\,
 \widehat{f}_\bsk^c \nonumber 
  &\,:=\,
\frac{Q_n^*\big(f(\cos(2\pi\,\bigcdot)) \, \sqrt{2}^{|\bsk|_0} \cos(2\pi \bsk \cdot \bigcdot)\big)}{c_\bsk}, \quad\mbox{with} \\
  c_\bsk \label{eq:che-plan-c-ck}
  &\,:=\,
  \size{ \big\{ \bssigma\in\calS_\bsk \;:\;
  \bssigma(\bsk)\cdot\bsz  \equiv_n \bsk \cdot \bsz \big\}}
  \quad
  \text{for all } \bsk \in \Lambda,
\end{align}
if and only if
\[ 
  \bssigma(\bsk')\cdot\bsz \not\equiv_n \bsk\cdot\bsz
  \qquad\mbox{for all }
  \bsk,\bsk' \in \Lambda, \, \bssigma\in\calS_{\bsk'},\, \bsk\ne \bsk'.
\] 
Such a generating vector $\bsz$ can be constructed component-by-component
if $n$ is a prime satisfying
\[ 
  n >
  \max\left\{ \size{\Lambda}\, \size{\calM(\Lambda)}, \,
  2 \max(\Lambda)\right\}.
\] 
\end{lemma}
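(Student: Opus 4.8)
The plan is to transfer Lemma~\ref{lem:plan-c} from the cosine space to the Chebyshev space via the cosine-transform isomorphism, exactly as Lemmas~\ref{lem:che1} and~\ref{lem:che2} were obtained from Lemmas~\ref{lem:cos1} and~\ref{lem:cos2}. First I would set up the dictionary: for $f\in\calF^{\rm Cheb}_\Lambda$ put $f_{\rm cos}:=f(\cos(\pi\,\bigcdot))$. Since $\cheb_\bsk(\cos(\pi\bsx'))=\cosb_\bsk(\bsx')$, the function $f_{\rm cos}$ lies in $\calF^{\rm cos}_\Lambda$ with the \emph{same} coefficients as $f$; conversely every $g\in\calF^{\rm cos}_\Lambda$ is of this form, so $f\mapsto f_{\rm cos}$ is a bijection of $\calF^{\rm Cheb}_\Lambda$ onto $\calF^{\rm cos}_\Lambda$ that identifies Chebyshev coefficients with cosine coefficients.

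Next I would rewrite the numerator of $\widehat{f}_\bsk^c$ in terms of $f_{\rm cos}$. Using $\cos(\pi\,\tent(\bst))=\cos(2\pi\,\bst)$ we have $f(\cos(2\pi\,\bst))=f_{\rm cos}(\tent(\bst))$ at every lattice point $\bst$, hence
\[
  Q_n^*\bigl(f(\cos(2\pi\,\bigcdot))\,\sqrt{2}^{|\bsk|_0}\cos(2\pi\bsk\cdot\bigcdot)\bigr)
  \,=\,
  Q_n^*\bigl((f_{\rm cos}\circ\tent)\,\sqrt{2}^{|\bsk|_0}\cos(2\pi\bsk\cdot\bigcdot)\bigr),
\]
which is exactly the numerator appearing in Lemma~\ref{lem:plan-c} applied to $f_{\rm cos}$. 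Moreover the normalization factor $c_\bsk$ in~\eqref{eq:che-plan-c-ck} is literally the same as the one in~\eqref{eq:plan-c-ck}: it depends only on $n$, $\bsz$ and $\bsk$, not on the function. Therefore the plan-C approximation $\widehat{f}_\bsk^c$ in the Chebyshev sense coincides with the plan-C approximation of $f_{\rm cos}$ in the cosine sense, for every $\bsk\in\Lambda$.

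Combining these two observations, exact reconstruction $\widehat{f}_\bsk^c=\widehat{f}_\bsk$ for all $\bsk\in\Lambda$ and all $f\in\calF^{\rm Cheb}_\Lambda$ holds if and only if the analogous statement holds for all $g\in\calF^{\rm cos}_\Lambda$ (transporting along the bijection and using that Chebyshev and cosine coefficients agree), which by Lemma~\ref{lem:plan-c} is equivalent to $\bssigma(\bsk')\cdot\bsz\not\equiv_n\bsk\cdot\bsz$ for all $\bsk,\bsk'\in\Lambda$, $\bssigma\in\calS_{\bsk'}$ with $\bsk\ne\bsk'$. The component-by-component claim is then inherited from Theorem~\ref{thm:cbc-plan-c} exactly as in the proof of Lemma~\ref{lem:plan-c}, because the admissibility condition imposed on $\bsz$ is identical.

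As for difficulty, there is essentially no obstacle here: the content is bookkeeping around the identity $\cos(\pi\,\tent(\bst))=\cos(2\pi\,\bst)$ and the coefficient-preserving isomorphism already established in Section~\ref{sec:che}. The only points that need care are checking that $c_\bsk$ is genuinely function-independent (so the two normalizations really match) and quantifying the ``if and only if'' over the right, bijectively corresponding families of functions, so that no exactness case is lost in translation.
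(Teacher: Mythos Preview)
Your proposal is correct and matches the paper's approach exactly: the paper simply states this lemma as the Chebyshev-space analogue of Lemma~\ref{lem:plan-c}, obtained via the cosine-transform isomorphism and the identity $\cos(\pi\,\tent(\bst))=\cos(2\pi\,\bst)$, without giving a separate proof. Your write-up in fact spells out more detail than the paper does.
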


\begin{lemma} \label{lem:DCT-che}
We can use FFTs or DCTs to map Chebyshev coefficients to function values
on tent-transformed and then cosine-transformed lattice points, and the
other way round, for an $n$-point rank-$1$ lattice rule with generating
vector $\bsz$ satisfying the non-aliasing conditions of
Lemma~\ref{lem:che2} \textnormal{(}plan A\textnormal{)},
Lemma~\ref{lem:che-plan-b} \textnormal{(}plan B\textnormal{)} or
Lemma~\ref{lem:che-plan-c} \textnormal{(}plan C\textnormal{)} on an
arbitrary index set $\Lambda \subset \NN_0^d$ by replacing
$\tent(\bigcdot)$ by $\cos(\pi\,\tent(\bigcdot))$ in the statement of
Lemma~\ref{lem:DCT-cos}.
\end{lemma}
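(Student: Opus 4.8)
The plan is to deduce this directly from Lemma~\ref{lem:DCT-cos} via the cosine-transform isomorphism of Section~\ref{sec:che}. Given $f\in\calF^{\rm Cheb}_\Lambda$, put $f_{\rm cos}:=f(\cos(\pi\,\bigcdot))$; by the isomorphism $f_{\rm cos}\in\calF^{\rm cos}_\Lambda$, and its cosine coefficients coincide with the Chebyshev coefficients of $f$. The first step is the pointwise identity $f\big(\cos(\pi\,\tent(\bst))\big)=f\big(\cos(2\pi\,\bst)\big)=f_{\rm cos}(\tent(\bst))$, which uses $\cos(\pi\,\tent(\bst))=\cos(2\pi\,\bst)$. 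Consequently, evaluating $f$ at the tent-transformed-then-cosine-transformed lattice points $\cos\!\big(2\pi\,(i\bsz\bmod n)/n\big)$ produces exactly the same vector $\bsf\in\RR^n$ (with the same symmetry $f_i=f_{n-i}$, now coming from $\cos(2\pi\,i\bsz/n)=\cos(2\pi\,(n-i)\bsz/n)$) as evaluating $f_{\rm cos}$ at the tent-transformed lattice points $\tent\!\big((i\bsz\bmod n)/n\big)$.

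Next I would note that the hypotheses match verbatim: the non-aliasing conditions on $(\bsz,n,\Lambda)$ in Lemmas~\ref{lem:che2}, \ref{lem:che-plan-b}, \ref{lem:che-plan-c} are identical to those in Lemmas~\ref{lem:cos2}, \ref{lem:plan-b}, \ref{lem:plan-c}, respectively, and all quantities used by the algorithm --- $|\bsk|_0$, $\calS_\bsk$, $\bsk\cdot\bsz\bmod n$, and $c_\bsk$ --- depend only on $\bsk$, $\bsz$, $n$ and are therefore unchanged. Hence, whenever the Chebyshev hypotheses hold, Lemma~\ref{lem:DCT-cos} applies to $f_{\rm cos}$: the ``coefficients from function values'' routine, fed the vector $\bsf$ (built from $f(\cos(2\pi\,(i\bsz\bmod n)/n))$ instead of $f(\tent((i\bsz\bmod n)/n))$) and run through a length-$n$ FFT --- or the length-$(m+1)$ DCT-I resp.\ length-$m$ DCT-V for $n=2m$ resp.\ $n=2m-1$ exploiting $f_i=f_{n-i}$ --- returns $\widehat f_\bsk=\sqrt2^{|\bsk|_0}F_{(\bsk\cdot\bsz\bmod n)}/c_\bsk$; these are the cosine coefficients of $f_{\rm cos}$, hence the Chebyshev coefficients of $f$. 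The ``function values from coefficients'' routine is the exact inverse. Thus the algorithm of Lemma~\ref{lem:DCT-cos} carries over word for word after replacing $\tent(\bigcdot)$ by $\cos(\pi\,\tent(\bigcdot))$ in the evaluation of $f$, as asserted (with $c_\bsk=1$ for plans A and B).

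I do not expect a genuine obstacle here; the only point deserving a little care is to confirm that the symmetry $f_i=f_{n-i}$ --- which in the cosine setting followed from $\tent(\bst_i)=\tent(\bst_{n-i})$ --- still holds, and it does because $\cos(2\pi\,i\bsz/n)=\cos(2\pi\,(n-i)\bsz/n)$, so the same DCT shortcuts remain available.
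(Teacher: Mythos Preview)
Your proposal is correct and follows exactly the approach implicit in the paper: the lemma is stated there without proof, relying on the cosine-transform isomorphism of Section~\ref{sec:che} together with the identity $\cos(\pi\,\tent(\bst))=\cos(2\pi\,\bst)$ to reduce everything to Lemma~\ref{lem:DCT-cos}. Your write-up simply makes explicit the details the paper leaves to the reader.
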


Lemma~\ref{lem:che-plan-c} with even $n=2m$ in combination with the DCT-I
in Lemma~\ref{lem:DCT-che} is essentially the approach in Potts \& Volkmer
\cite{PV15}.

\section{The CBC construction} \label{sec:cbc}

\subsection{Induction proof for the component-by-component construction}

Let $\bbZ_n^* := \{1,\ldots,n-1\}$ for $n$ prime. The necessary and
sufficient conditions on the lattice rule generating vector
$\bsz\in(\bbZ_n^*)^d$ for integral exactness and function reconstruction
in most cases boil down to the same generic form of verifying for a given
index set $\calA\subset\bbZ^d$ that
\begin{align}\label{eq:generic}
     \bsh \cdot \bsz \not\equiv_n 0
     \qquad\mbox{for all}\quad \bsh \in \calA\nozero.
\end{align}
The following theorem justifies a generic component-by-component (CBC)
algorithm to find a $\bsz$ satisfying this condition. The inductive
argument needs to work with projections of the index set $\calA$ down to
the lower coordinates. We consider two definitions for the projections
since each has its advantages:
\begin{align}
  \label{eq:set-s0}
  \mbox{either}\qquad
  \calA_s &\,:=\, \{ \bsh \in \bbZ^s \,:\, (\bsh,\bszero) \in\calA \}
  &\mbox{for } s=1,\ldots,d,
  \\
  \label{eq:set-s}
  \mbox{or}\qquad
  \calA_s &\,:=\,
  \{ \bsh\in\bbZ^s \,:\, (\bsh,\bsh_\perp) \in\calA \,\mbox{ for some }\, \bsh_\perp\in\bbZ^{d-s}\}
  &\mbox{for } s=1,\ldots,d.
\end{align}
Both definitions yield $\calA_d = \calA$. The definition \eqref{eq:set-s0}
includes only the indices whose higher components are zero; we shall refer
to this as the `zero' projection. The definition \eqref{eq:set-s} includes
all indices obtained by truncating the original indices; we shall refer to
this as the `full' projection. If the index set $\calA$ is downward closed
then they are the same; otherwise \eqref{eq:set-s0} is a subset of
\eqref{eq:set-s}. The full projection \eqref{eq:set-s} was used in
\cite{CKN10,Kam13,Kam14}; the zero projection \eqref{eq:set-s0} is new in
this paper.

The condition $n > \max(\calA)$ in the theorem guarantees that the
components of $\bsh \in \calA$ all satisfy $|h_j|<n$, and thus $h_j
\equiv_n 0$ if and only if $h_j= 0$. This condition can be replaced by the
direct assumption that there is no $\bsh\in\calA$ with a nonzero component
$h_j$ that is a multiple of $n$.

\begin{theorem}\label{thm:cbc}
Let $\calA\subset\bbZ^d$ be an arbitrary index set, and let $n$ be a prime
number satisfying
\begin{equation} \label{eq:cond-n}
  n > \max \Big\{\frac{\sizep{\calA\nozero}}{\kappa} + 1\;,\; \max(\calA) \Big\},
\end{equation}
with $\kappa = 2$ if $\calA$ is centrally symmetric and $\kappa = 1$
otherwise. Define the projections $\calA_s$ by \eqref{eq:set-s0} or
\eqref{eq:set-s}. Then a generating vector $\bsz^*=(z_1,\ldots,z_d) \in
(\ZZ^*_n)^d$ can be constructed component-by-component such that for all
$s = 1, \ldots, d$ and $\bsz = (z_1, \ldots, z_s)$ we have
  \begin{align}\label{eq:good-z}
     \bsh \cdot \bsz \not\equiv_n 0
     \qquad\mbox{for all}\quad \bsh \in \calA_s\!\setminus\!\{\bszero\}.
  \end{align}
\end{theorem}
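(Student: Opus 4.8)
The plan is a straightforward induction on the number of components $s$, following the classical CBC argument of \cite{CKN10,Kam14} but set up so that it covers both projection definitions \eqref{eq:set-s0} and \eqref{eq:set-s} simultaneously. For $s=1$ I would simply take $z_1=1$ (or any element of $\ZZ_n^*$): every nonzero $h\in\calA_1$ satisfies $|h|\le\max(\calA)<n$ by \eqref{eq:cond-n}, so $h\not\equiv_n 0$, and since $n$ is prime $hz_1\not\equiv_n 0$. For the inductive step, assume $\bsz'=(z_1,\ldots,z_{s-1})\in(\ZZ_n^*)^{s-1}$ already satisfies \eqref{eq:good-z} for $\calA_{s-1}$; the goal is to append a good $z_s$.

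The first step is to classify the constraints imposed by the elements $\bsh=(\bsh',h_s)\in\calA_s\nozero$. If $h_s=0$, then $\bsh'\ne\bszero$, and for either projection definition one checks that $\bsh'\in\calA_{s-1}\nozero$ (for \eqref{eq:set-s0} because $(\bsh',0,\bszero)\in\calA$; for \eqref{eq:set-s} by re-truncating a witness), so $\bsh\cdot(\bsz',z_s)=\bsh'\cdot\bsz'\not\equiv_n 0$ holds automatically by the inductive hypothesis, whatever $z_s$ is. If $h_s\ne 0$, then since $|h_s|<n$ and $n$ is prime, $h_s$ is invertible modulo $n$, so $\bsh\cdot(\bsz',z_s)\equiv_n 0$ for exactly one residue, namely $z_s\equiv_n -h_s^{-1}(\bsh'\cdot\bsz')$; thus each such $\bsh$ rules out precisely one candidate value of $z_s$.

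It then remains to count the excluded residues and show they do not fill up all of $\ZZ_n^*$. Collecting $0$ together with every residue excluded by some $\bsh\in\calA_s\nozero$ with $h_s\ne0$ into a set $F\subseteq\ZZ_n$, one gets $\#F\le\sizep{\calA_s\nozero}+1\le\sizep{\calA\nozero}+1<n$ in the general case $\kappa=1$, using the (routine) surjectivity of truncation from $\calA\nozero$ onto $\calA_s\nozero$. When $\calA$ is centrally symmetric, $\calA_s$ is too (for both projections), and the point is that $\bsh$ and $-\bsh$ exclude the same residue, since $-(-h_s)^{-1}((-\bsh')\cdot\bsz')=-h_s^{-1}(\bsh'\cdot\bsz')$; as $\bsh\mapsto-\bsh$ is a fixed-point-free involution of $\{\bsh\in\calA_s\nozero:h_s\ne0\}$, the distinct excluded residues number at most $\tfrac12\sizep{\calA_s\nozero}$, so $\#F\le\tfrac12\sizep{\calA\nozero}+1<n$. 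In either case $\ZZ_n\setminus F\ne\emptyset$; picking $z_s$ there yields $z_s\in\ZZ_n^*$ with $(\bsz',z_s)$ satisfying \eqref{eq:good-z} for $\calA_s$, closing the induction.

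I expect the only delicate point to be making this last count match the stated threshold on $n$ sharply for both values of $\kappa$: one must verify that the involution $\bsh\leftrightarrow-\bsh$ is genuinely fixed-point-free on the relevant set (it is, because $h_s\ne0$ forces $\bsh\ne-\bsh$) and that any excluded residue that happens to equal $0$ only helps, so that the factor $\kappa$ enters exactly as in \eqref{eq:cond-n}. Everything else — invertibility modulo the prime $n$, the $h_s=0$ reduction to $\calA_{s-1}$, and the comparison $\sizep{\calA_s\nozero}\le\sizep{\calA\nozero}$ — is routine.
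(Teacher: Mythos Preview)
Your proposal is correct and follows essentially the same inductive CBC argument as the paper: classify the constraints by whether $h_s=0$ (handled by the inductive hypothesis) or $h_s\ne 0$ (excludes a single residue), then count excluded residues, halving in the centrally symmetric case via the involution $\bsh\mapsto -\bsh$. The paper first derives the sharper per-step bound $\#(\calA_s\nozero)-\#(\calA_{s-1}\nozero)-\#\calA_s^\dag$ before relaxing to the stated condition on $n$, whereas you go straight to the simplified bound; both arrive at the same threshold \eqref{eq:cond-n}.
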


\begin{proof}
The result for centrally symmetric and downward closed index sets (e.g.\
hyperbolic cross) or more general index sets with the full projection
\eqref{eq:set-s} has been proved in \cite{CKN10,Kam13,Kam14}. So we focus
on proving the general result with the zero projection \eqref{eq:set-s0}.

The proof is by induction on $s$. We will attempt to derive the condition
\eqref{eq:cond-n} rather than assuming it from the beginning.

For $s=1$, the condition $h_1 z_1\not\equiv_n 0$ holds for all $z_1 \in
\ZZ^*_n$ if $h_1\not\equiv_n 0$, and fails for all $z_1$ if $h_1\equiv_n
0$. To avoid the latter scenario we assume that $n>|h_1|$ always holds.

Suppose we already obtained the generating vector $\bsz \in
(\ZZ^*_n)^{s-1}$ satisfying \eqref{eq:good-z} for $\calA_{s-1}$. For each
$(\bsh, h_s) \in \calA_s\nozero$, we will eliminate any `bad'
$z_s\in\bbZ_n^*$ that satisfies
\begin{align}\label{eq:bad-z}
     (\bsh, h_s) \cdot (\bsz, z_s)
     \equiv_n
     \bsh\cdot\bsz + h_s z_s
     \equiv_n
     0
     \quad\Leftrightarrow\quad
     h_s z_s
     \equiv_n - \bsh\cdot\bsz.
\end{align}
We stress that $(\bsh, h_s) \in \calA_s$ does not imply $\bsh \in
\calA_{s-1}$ under the zero projection \eqref{eq:set-s0}. Depending on the
value of $h_s$ we have the following scenarios:
\begin{enumerate}
  \item If $h_s = 0$ then $\bsh \in \calA_{s-1}\nozero$; in turn the
      induction hypotheses \eqref{eq:good-z} for $\calA_{s-1}$
      guarantees that $\bsh\cdot\bsz \not\equiv_n 0$ and so
      \eqref{eq:bad-z} has no solution for~$z_s$. There are
      $\sizep{\calA_{s-1}\nozero}$ such cases.

  \item If $h_s\ne 0$ but $h_s \equiv_n 0$, then \eqref{eq:bad-z} has
      no solution for~$z_s$ if $\bsh\cdot\bsz \not\equiv_n 0$, or it
      holds for all~$z_s$ if $\bsh\cdot\bsz \equiv_n 0$. To avoid the
      latter scenario we assume that $n > |h_s|$ always holds.

  \item If $h_s \not\equiv_n 0$ then, since $n$ is prime,
      \eqref{eq:bad-z}~has a unique solution for $z_s \in \ZZ^*_n$ if
      $\bsh\cdot\bsz \not\equiv_n 0$, or has no solution for $z_s \in
      \ZZ^*_n$ if $\bsh\cdot\bsz \equiv_n 0$. The latter scenario
      includes $\bsh=\bszero$ so there are at least
      $\size{\calA_s^\dag}$ such cases, where $\calA_s^\dag := \{
      h_s\in\bbZ : h_s \not\equiv_n 0 \mbox{ and } (\bszero, h_s) \in
      \calA_s\}$.
\end{enumerate}
Thus, provided that $n > |h_s|$, there is at most one bad $z_s$ to be
eliminated for each $(\bsh, h_s) \in \calA_s\nozero$, and the total number
of bad $z_s$ we eliminate is at most $\sizep{\calA_s\nozero} -
\sizep{\calA_{s-1}\nozero} - \size{\calA_s^\dag}$.

Hence, provided additionally that $\size{\ZZ^*_n} = n-1 > \size{\calA_s} -
\size{\calA_{s-1}} - \size{\calA_s^\dag}$, there is always a `good' $z_s$
remaining such that $(\bsz,z_s)$ will satisfy~\eqref{eq:good-z} for
$\calA_s$. Moreover if $\calA$ is centrally symmetric, then all $\calA_s$
are centrally symmetric, and both $(\bsh', h_s)$ and $(-\bsh, -h_s)$ will
eliminate the same $z_s$ if a solution for~\eqref{eq:bad-z} exists. It
then suffices to demand that $n-1 > (\size{\calA_s} - \size{\calA_{s-1}} -
\size{\calA_s^\dag})/2$. By induction, to ensure that a good
$\bsz^*\in(\bbZ_n^*)^d$ exists, it suffices to assume that
\[
  n > \max
\Big\{
\max_{s=2,\ldots,d}
    \frac{\size{\calA_s} - \size{\calA_{s-1}} - \size{\calA_s^\dag}}{\kappa} + 1\;,\;
  \max_{s=1,\dots,d} \max_{\bsh\in\calA_s} |h_s|
  \Big\},
\]
with $\kappa = 2$ if $\calA$ is centrally symmetric and $\kappa = 1$
otherwise. This leads to the simplified condition on $n$ in the theorem.

Now for completeness we discuss briefly the case for the full projection
\eqref{eq:set-s}. The proof is almost identical to the case for the zero
projection but is slightly simpler. The subtle difference is that for each
$(\bsh,h_s)\in\calA_s$ we now have $\bsh\in\calA_{s-1}$ regardless of the
value of $h_s$, and the induction hypothesis \eqref{eq:good-z} for
$\calA_{s-1}$ guarantees that $\bsh\cdot\bsz\not\equiv_n 0$ if $h_s\ne 0$,
thus simplifying the second and third scenarios above.
\end{proof}

\subsection{Algorithmic aspects of the CBC construction}

\begin{remark} \label{rem:cbc}
For the projections $\calA_s$ defined by either \eqref{eq:set-s0} or
\eqref{eq:set-s}, Theorem~\ref{thm:cbc} and its proof justify two
different approaches to carry out the component-by-component construction:
\begin{itemize}
\item Brute force approach: At step $s$, we search through
    $z_s\in\ZZ^*_n$ until we find one that satisfies \eqref{eq:good-z}
    for all $\bsh\in\calA_s\nozero$. The cost is $\calO(n_{\rm
    fail}\,\size{\calA_s})$, where $n_{\rm fail}$ is the number of
    different $z_s$ that was checked. So the cost is at worst
    $\calO(n\,\size{\calA_s})$, leading to a total cost of
    $\calO(d\,n\,\size{\calA})$.
\item Elimination approach: At step $s$, we loop through every
    $\bsh\in\calA_s\nozero$ and eliminate the corresponding
    $z_s\in\ZZ^*_n$ that fails \eqref{eq:good-z}, if any. Then we take
    any remaining $z_s$. The cost is only $\calO(\size{\calA_s})$,
    leading to a total cost of $\calO(d\,\size{\calA})$.
\end{itemize}
In both approaches we have left out the $\calO(s)$ factor in step $s$ that
arises from the evaluation of dot products; this is valid if we store and
update the values of $\bsh\cdot\bsz$ for all $\bsh$ in each step. In the
elimination approach we mark the bad choices of $z_s$ in an array of
length $n-1$ with pointers linking the previous and next good choices of
$z_s$, so that it is $\calO(1)$ cost to obtain a good $z_s$ at the end.

The two approaches may be used for different steps in the algorithm if it
is advantageous to mix them. Both approaches are guaranteed to succeed
provided $n$ is sufficiently large, see \eqref{eq:cond-n}. We can run the
algorithm with smaller values of $n$ (or even composite values of $n$ in
the brute force approach), and be prepared to increase $n$ when the
algorithm fails. Once a $\bsz$ is found, we can systematically test and
reduce the value of~$n$ by verifying whether \eqref{eq:good-z} still holds
for $\calA$.

In general the zero projections \eqref{eq:set-s0} are subsets of the full
projections \eqref{eq:set-s}, and consequently the condition
\eqref{eq:good-z} is weaker and faster to check for \eqref{eq:set-s0} than
for \eqref{eq:set-s}. There are also algorithmic advantages in the data
structure for iterating the sets based on \eqref{eq:set-s0}, namely, that
the indices can be ordered according to the number of zeros at the end.
\end{remark}

We now apply Theorem~\ref{thm:cbc} to the situation where the input set
$\calA$ is a difference set, i.e., $\calA = \Lambda\ominus\Lambda$. Then
the condition \eqref{eq:good-z} is now explicitly given by
\begin{align}\label{eq:diff-z1}
     \bsh \cdot \bsz \not\equiv_n 0
     \qquad\mbox{for all}\quad \bsh \in (\Lambda\ominus\Lambda)_s \setminus\{\bszero\},
\end{align}
where $(\Lambda\ominus\Lambda)_s$ denote the projection of the difference
set $\Lambda\ominus\Lambda$ defined according to \eqref{eq:set-s0} or
\eqref{eq:set-s}. Since $\sizep{\Lambda\ominus\Lambda}\le
(\size{\Lambda})^2$, Remark~\ref{rem:cbc} indicates that the cost of CBC
construction for $\calA = \Lambda\ominus\Lambda$ is
$\calO(d\,n\,(\size{\Lambda})^2)$ or $\calO(d\,(\size{\Lambda})^2)$ for
the two approaches, respectively.

Similarly, for all sign changes on an index set $\Lambda$ we have
$\size{\calM(\Lambda)}\le 2^d\,\size{\Lambda}$ and therefore the cost of
CBC construction for $\calA = \calM(\Lambda) \oplus \calM(\Lambda)$ is
$\calO(d\,n\,2^{2d}\,(\size{\Lambda})^2)$ or
$\calO(d\,2^{2d}\,(\size{\Lambda})^2)$; the cost for $\calA = \Lambda
\oplus \calM(\Lambda)$ is $\calO(d\,n\,2^d\,(\size{\Lambda})^2)$ or
$\calO(d\,2^d\,(\size{\Lambda})^2)$.

Hence, we can apply Theorem~\ref{thm:cbc} and Remark~\ref{rem:cbc} to the
Fourier space in Lemmas~\ref{lem:fou1} and~\ref{lem:fou2}, noting that the
difference set $\Lambda\ominus\Lambda$ is always centrally symmetric and
contains the zero vector. Analogously, we can apply Theorem~\ref{thm:cbc}
and Remark~\ref{rem:cbc} to the cosine space in Lemmas~\ref{lem:cos1}
and~\ref{lem:cos2}, as well as Lemma~\ref{lem:plan-b} -- plan B, and
correspondingly, to the Chebyshev space in Lemmas~\ref{lem:che1},
\ref{lem:che2}, and~\ref{lem:che-plan-b}.

However, plan C for the cosine space and Chebyshev space, see
Lemmas~\ref{lem:plan-c} and~\ref{lem:che-plan-c}, respectively, cannot be
formulated in the same generic form \eqref{eq:generic}. So we will need to
develop a separate justification for it. We will return to this later in
Subsection~\ref{sec:cbc-plan-c}.

\subsection{Smart lookup for the brute force approach with full projection}

When $\calA_s$ are full projections \eqref{eq:set-s}, we have the
important property that the projection of the difference set equals the
difference set of the projections, i.e.,
\begin{equation}
  (\Lambda\ominus\Lambda)_s = \Lambda_s\ominus\Lambda_s
  \qquad\mbox{for the full projection \eqref{eq:set-s}.} \label{eq:key}
\end{equation}
Thus the condition \eqref{eq:diff-z1} becomes
\[
      \bsh \cdot \bsz \not\equiv_n 0
     \qquad\mbox{for all}\quad \bsh \in (\Lambda_s\ominus\Lambda_s)\nozero,
\]
which is equivalent to
\begin{align}\label{eq:diff-z2}
  \bsh \cdot \bsz \not\equiv_n \bsh' \cdot \bsz
  \qquad
  \text{for all}\quad \bsh,\bsh' \in \Lambda_s \text{ with } \bsh \ne \bsh'
  .
\end{align}
In other words, every dot product needs to have a unique value.

The following code snippet shows that it is possible to verify condition
\eqref{eq:diff-z2} for a given $\bsz$ with cost $\calO(\size{\Lambda_s})$
rather than $\calO((\size{\Lambda_s})^2)$, by marking a bit string of
length $n$ for the values of dot product modulo $n$ that have occurred.
\begin{lstlisting}[mathescape]
 // Fourier space
 // INPUT: $\bsz$, $n$ and $\Lambda_s$
 // VERIFY: $\bsh \cdot \bsz \not\equiv_n \bsh' \cdot \bsz$ for all $\bsh,\bsh' \in \Lambda_s$, $\bsh \ne \bsh'$
 // COST: $\calO(\size{\Lambda_s})$

 $S = 0$
 for $\bsh \in \Lambda_s$:
     $\alpha = \bsh \cdot \bsz \bmod{n}$
     if $S[\alpha]=1$: return FALSE
     $S[\alpha] = 1$
 return TRUE
\end{lstlisting}
Consequently, we can reduce the cost of the brute force CBC construction
for $\calA = \Lambda\ominus\Lambda$ from $\calO(d\,n\,(\size{\Lambda})^2)$
to $\calO(d\,n\,\size{\Lambda})$. We shall refer to this as the ``smart
lookup'' trick.

We stress once again that \eqref{eq:key} only holds when we have the full
projection \eqref{eq:set-s}. Under the zero projection \eqref{eq:set-s0}
we would have in general $(\Lambda\ominus\Lambda)_s \supseteq
\Lambda_s\ominus\Lambda_s$; in this case the alternative formulation
\eqref{eq:diff-z2} would miss out on some indices.

Similar reduction in cost can be achieved for $\calA =
\calM(\Lambda)\oplus \calM(\Lambda)$ as we show in the code snippet below.
\begin{lstlisting}[mathescape]
 // Cosine space and Chebyshev space -- plan A
 // INPUT: $\bsz$, $n$ and $\Lambda_s$
 // VERIFY: $\bssigma'(\bsk') \cdot \bsz \not\equiv_n \bssigma(\bsk) \cdot \bsz$ for all $\bsk,\bsk' \in \Lambda_s$, $\bssigma\in\calS_\bsk$, $\bssigma'\in\calS_{\bsk'}$, $\bssigma'(\bsk') \ne \bssigma(\bsk)$
 // COST: $\calO(\size{\calM(\Lambda_s}))$

 $S = 0$
 for $\bsk \in \Lambda_s$:
     for $\bsh \in \{ \bssigma(\bsk) : \bssigma \in \calS_\bsk \}$:
         $\alpha = \bsh \cdot \bsz \bmod{n}$
         if $S[\alpha]=1$: return FALSE
         $S[\alpha]=1$
 return TRUE
\end{lstlisting}

We can save on half of the calculations, since if $\bsk \ne \bszero$ we
can fix one of the signs for a non-zero element of $\bsk$ to get half of
the sign changes and multiply by $-1$ to get the other half as shown
below.

\begin{lstlisting}[mathescape]
 // Cosine space and Chebyshev space -- plan A -- halved
 // INPUT: $\bsz$, $n$ and $\Lambda_s$
 // VERIFY: $\bssigma'(\bsk') \cdot \bsz \not\equiv_n \bssigma(\bsk) \cdot \bsz$ for all $\bsk,\bsk' \in \Lambda_s$, $\bssigma\in\calS_\bsk$, $\bssigma'\in\calS_{\bsk'}$, $\bssigma'(\bsk') \ne \bssigma(\bsk)$
 // COST: $\calO(\size{\calM(\Lambda_s})/2)$

 $S = 0$
 if $\bszero \in \Lambda_s$: $S[0] =1$
 for $\bsk \in \Lambda_s \setminus\{\bszero\}$:
     $i = \min\{ j : k_j \ne 0 \}$
     for $\bsh \in \{ \bssigma(\bsk) : \bssigma \in \calS_\bsk, \sigma_i = +1 \}$:
         $\alpha = \bsh \cdot \bsz \bmod{n}$        // note: $n - \alpha = -\bsh \cdot \bsz \bmod{n}$
         if $S[\alpha]=1$: return FALSE
         $S[\alpha]=1$
         if $S[n - \alpha]=1$: return FALSE   // note: it could happen that $n-\alpha = \alpha$
         $S[n-\alpha]=1$
 return TRUE
\end{lstlisting}

We can achieve a similar reduction in cost for $\calA = \Lambda\oplus
\calM(\Lambda)$, but this is more complicated because we need to
distinguish between the dot products coming from the original indices and
the dot products coming from sign changes of the indices. We do this by
keeping two bit strings of length $n$ as shown in the code snippet below:
$S_1$ marks the original dot products, while $S_2$ marks the dot products
from all sign changes thus including $S_1$. (We can also half the cost as
above but we do not include that here.)

\begin{lstlisting}[mathescape]
 // Cosine space and Chebyshev space -- plan B
 // INPUT: $\bsz$, $n$ and $\Lambda_s$
 // VERIFY: $\bssigma(\bsk') \cdot \bsz \not\equiv_n \bsk \cdot \bsz$ for all $\bsk,\bsk' \in \Lambda_s$, $\bssigma \in\calS_{\bsk'}$, $\bssigma(\bsk') \ne \bsk$
 // COST: $\calO(\size{\calM(\Lambda_s}))$

 $S_1 = 0$
 $S_2 = 0$
 for $\bsk \in \Lambda_s$:
     $\alpha = \bsk \cdot \bsz \bmod{n}$
     if $S_2[\alpha]=1$: return FALSE     // note: the value of $S_1[\alpha]$ is also checked since $S_1 \subseteq S_2$
     $S_2[\alpha]=1$
     $S_1[\alpha]=1$
     for $\bsh \in \{ \bssigma(\bsk) : \bssigma \in \calS_\bsk \}$ with $\bsh\ne\bsk$:
         $\alpha' = \bsh \cdot \bsz \bmod{n}$
         if $S_1[\alpha']=1$: return FALSE
         $S_2[\alpha']=1$        // note: it does not matter if $S_2[\alpha']$ is already set
 return TRUE
\end{lstlisting}

The previous algorithm can now be modified to allow self-aliasing and to
keep track of the constant $c_\bsk$, see Lemma~\ref{lem:plan-c} for cosine
space and Lemma~\ref{lem:che-plan-c} for Chebyshev space.

\begin{lstlisting}[mathescape]
 // Cosine space and Chebyshev space -- plan C
 // INPUT: $\bsz$, $n$ and $\Lambda_s$
 // VERIFY: $\bssigma(\bsk') \cdot \bsz \not\equiv_n \bsk \cdot \bsz$ for all $\bsk,\bsk' \in \Lambda_s$, $\bssigma \in \calS_{\bsk'}$, $\bsk' \ne \bsk$
 // COST: $\calO(\size{\calM(\Lambda_s}))$

 $S_1 = 0$
 $S_2 = 0$
 for $\bsk \in \Lambda_s$:
     $\alpha = \bsk \cdot \bsz \bmod{n}$
     if $S_2[\alpha] = 1$: return FALSE
     $S_2[\alpha]=1$
     $c_\bsk = 1$
     for $\bsh \in \{ \bssigma(\bsk) : \bssigma \in \calS_\bsk \}$ with $\bsh\ne\bsk$:
         $\alpha' = \bsh \cdot \bsz \bmod{n}$
         if $\alpha' = \alpha$: $c_\bsk$ += $1$
         if $S_1[\alpha']=1$: return FALSE
         $S_2[\alpha']=1$
     $S_1[\alpha]=1$
 return TRUE with $\{c_\bsk : \bsk\in\Lambda_s\}$
\end{lstlisting}

The crucial difference between the last two code snippets is that in plan
B the bit $S_1[\alpha]$ is marked before the dot products $\alpha'$ from
the sign changes are checked against $S_1$, thus not allowing
$\alpha'=\alpha$ (no aliasing), while in plan C the bit $S_1[\alpha]$ is
marked only after all $\alpha'$ have been checked against $S_1$, thus
allowing $\alpha' = \alpha$ and indeed counts the number of times this
occurs in $c_\bsk$ (self-aliasing).

We summarize this subsection in the following remark.

\begin{remark} \label{rem:cbc2}
The cost for brute force CBC in step $s$ with full projection
\eqref{eq:set-s} and smart lookup~is
\[
  \calO(n_{\rm fail}\,\size{\Lambda}_s), \qquad
  \calO(n_{\rm fail}\,\size{\calM(\Lambda}_s)), \qquad
  \calO(n_{\rm fail}\,\size{\calM(\Lambda}_s)),
\]
for the index sets $\calA = \Lambda\ominus\Lambda$, $\calA =
\calM(\Lambda)\oplus\calM(\Lambda)$ and $\calA =
\Lambda\oplus\calM(\Lambda)$, respectively.
\end{remark}

Similar strategies have been implemented in the sparseFFTr1l software
library of Toni Volkmer \cite{Volkmer}.

\subsection{Mixed CBC construction}

Combining Remark~\ref{rem:cbc2} with Remark~\ref{rem:cbc}, we see that
there is advantage in mixing the two different approaches. As long as
$n_{\rm fail}$ remains small it is advantageous to follow the brute force
approach with full projection \eqref{eq:set-s} and smart lookup. We
anticipate this to be the case for the initial dimensions.

Starting from $z_1 = 1$, at step $s$ we begin our brute force search with
the value $z_s = z_{s-1} + 1$. If this $z_s$ fails then we increment again
by $1$ and do this repeatedly (if $n-1$ is reached then we continue from
$1$) until a valid $z_s$ is found, while keeping a count on $n_{\rm
fail}$.
Then gradually as the dimension increases and as we run out of choices, we
expect the value of $n_{\rm fail}$ to increase until at some point the
balance tips over the other way and it becomes cheaper to follow the
elimination approach. From then on we switch over to the elimination
approach in the generic formulation \eqref{eq:generic} with the zero
projection \eqref{eq:set-s0} so that the sets are smaller (except for the
case of plan C which we discuss in the next subsection).

We summarize our results for the different spaces in Table~\ref{tab:cbc}.

\begin{table} [ht]
\caption{Summary of CBC algorithms for function reconstruction}
\label{tab:cbc} \small\vspace{0.3cm}
\begin{tabular}{|@{\,}c@{\,}|}
 \hline \\
 {\setlength{\extrarowheight}{3pt}
\begin{tabular}{@{\!}c@{\,}|c|c|c|c|}
 \cline{2-5}
 & Fourier space & \multicolumn{3}{c|}{Cosine space and Chebyshev space} \\
 \cline{3-5}
 & & Plan A & Plan B & Plan C \\
 \cline{2-5}
 (a) & $\bsh\cdot\bsz\not\equiv_n 0$ \quad for all
     & $\bsh\cdot\bsz\not\equiv_n 0$ \quad for all
     & $\bsh\cdot\bsz\not\equiv_n 0$ \quad for all
     & NA
     \\
     & $\bsh\in (\Lambda\ominus\Lambda)_s\!\setminus\!\{\bszero\}$
     & $\bsh\in (\calM(\Lambda)\!\oplus\!\calM(\Lambda))_s\!\setminus\!\{\bszero\}$
     & $\bsh\in (\Lambda\oplus\!\calM(\Lambda))_s\!\setminus\!\{\bszero\}$
     &
     \\
 \cline{2-5}
 (b) & $\bsh\cdot\bsz\not\equiv_n \bsh'\cdot\bsz$
     & $\bssigma'(\bsk')\cdot\bsz\not\equiv_n \bssigma(\bsk)\cdot\bsz$
     & $\bssigma(\bsk')\cdot\bsz\not\equiv_n \bsk\cdot\bsz$
     & $\bssigma(\bsk')\cdot\bsz\not\equiv_n \bsk\cdot\bsz$
     \\
     & for all $\bsh,\bsh'\in \Lambda_s,$
     & for all $\bsk,\bsk'\in \Lambda_s,$
     & for all $\bsk,\bsk'\in \Lambda_s,$
     & for all $\bsk,\bsk'\in \Lambda_s,$
     \\
     & $\bsh\ne\bsh'$
     & $\bssigma\in\calS_\bsk$, $\bssigma'\in\calS_{\bsk'},$
     & $\bssigma\in\calS_{\bsk'},$
     & $\bssigma\in\calS_{\bsk'},$
     \\
     &
     & $\bssigma'(\bsk')\ne\bssigma(\bsk)$
     & $\bssigma(\bsk')\ne\bsk$
     & $\bsk\ne\bsk'$
     \\
 \cline{2-5}
 (c) & $n \sim \size{(\Lambda\ominus\Lambda)_s}$
     & $n \sim \size{(\calM(\Lambda)\!\oplus\!\calM(\Lambda))_s}$
     & $n \sim \size{(\Lambda\!\oplus\!\calM(\Lambda))_s}$
     & $n \sim \size{\Lambda_s}\, \size{\calM(\Lambda_s)}$
     \\
 \cline{2-5}
 (d) & \cellcolor{green!25} $\size{(\Lambda\ominus\Lambda)_s}$
     & \cellcolor{green!25} $\size{(\calM(\Lambda)\!\oplus\!\calM(\Lambda))_s}$
     & \cellcolor{green!25} $\size{(\Lambda\!\oplus\!\calM(\Lambda))_s}$
     & NA
 \\
 \cline{2-5}
 (e) & $n_{\rm fail}\,\size{(\Lambda\ominus\Lambda)_s}$
     & $n_{\rm fail}\,\size{(\calM(\Lambda)\!\oplus\!\calM(\Lambda))_s}$
     & $n_{\rm fail}\,\size{(\Lambda\!\oplus\!\calM(\Lambda))_s}$
     & NA
 \\
 \cline{2-5}
 (f) & $(\size{\Lambda}_s)^2$
     & $(\size{\calM(\Lambda_s)})^2$
     & $\size{\Lambda_s}\,\size{\calM(\Lambda_s)}$
     & \cellcolor{yellow!25} $\size{\Lambda_s}\,\size{\calM(\Lambda_s)}$
 \\
 \cline{2-5}
 (g) & $n_{\rm fail}\,(\size{\Lambda}_s)^2$
     & $n_{\rm fail}\,(\size{\calM(\Lambda_s)})^2$
     & \multicolumn{2}{c|}{$n_{\rm fail}\,\size{\Lambda_s}\,\size{\calM(\Lambda_s)}$ }
 \\
 \cline{2-5}
 (h) & \cellcolor{blue!25} $n_{\rm fail}\,\size{\Lambda_s}$
     & \cellcolor{blue!25} $n_{\rm fail}\,\size{\calM(\Lambda_s)}$
     & \multicolumn{2}{c|}{\cellcolor{blue!25} $n_{\rm fail}\,\size{\calM(\Lambda_s)}$}
 \\
 \cline{2-5}
 (i) & $n_{\rm fail} \sim \size{\Lambda_s}$
     & $n_{\rm fail} \sim \size{\calM(\Lambda_s)}$
     & \multicolumn{2}{c|}{$n_{\rm fail} \sim \size{\calM(\Lambda_s)}$  }
     \\
 \cline{2-5}
 (j) & \cellcolor{red!25} $d\,(\size{\Lambda})^2$
     & \cellcolor{red!25} $d\,(\size{\calM(\Lambda)})^2$
     & \multicolumn{2}{c|}{\cellcolor{red!25} $d\,\size{\Lambda}\,\size{\calM(\Lambda)}$  }
 \\
 \cline{2-5}
 \end{tabular}
 }
 \\
 \begin{tabular}{c@{\;\;}l}
 &\\
 (a) & Standard formulation of the reconstruction condition at step $s$ with full/zero projection \\
 (b) & Equivalent formulation of the reconstruction condition at step $s$ with full projection \\
 (c) & Required size of $n$ to guarantee success at step $s$ (also need $n$ to cover spread of index set) \\
 (d) & \cellcolor{green!25}Cost of elimination approach at step $s$ based on (a) with full/zero projection \\
 (e) & Cost of brute force approach at step $s$ based on (a) with full/zero projection \\
 (f) & \cellcolor{yellow!25}Cost of elimination approach at step $s$ based on (b) with full projection \\
 (g) & Cost of brute force approach at step $s$ based on (b) with full projection \\
 (h) & \cellcolor{blue!25}Cost of brute force approach at step $s$ based on (b) with full projection and smart lookup \\
 (i) & Switching point on $n_{\rm fail}$ from brute force (h) to elimination (d)/(f) \\
 (j) & \cellcolor{red!25}Total cost of mixed CBC: brute force until $n_{\rm fail}$ reaches switching point then elimination \\
 \end{tabular}
 \\ \\
 \hline
\end{tabular}
\end{table}

\subsection{A new CBC proof for plan C} \label{sec:cbc-plan-c}

Recall that the condition \eqref{eq:plan-c-iff} is weaker than the
condition \eqref{eq:plan-b-iff}, which is in turn equivalent to
\eqref{eq:plan-b-iff-2}. Thus when we have the full projection
\eqref{eq:set-s}, the condition on $n$ in Theorem~\ref{thm:cbc} guarantees
the existence of $z_s$ with the required property in step $s$. However, to
prove that the CBC construction can find this vector, we need a new CBC
proof.

\begin{theorem} \label{thm:cbc-plan-c}
Let $\Lambda\subset\bbN^d_0$ be an arbitrary index set, and let $n$ be a
prime number satisfying
\begin{equation} \label{eq:cond-n-c}
  n > \max \Big\{\size{\Lambda}\,\size{\calM(\Lambda)}\;,\; 2\max(\Lambda) \Big\}.
\end{equation}
Define $\Lambda_s$ to be the full projection of $\Lambda$ as in
\eqref{eq:set-s}. Then a generating vector $\bsz^*=(z_1,\ldots,z_d) \in
(\ZZ^*_n)^d$ can be constructed component-by-component such that for all
$s = 1, \ldots, d$ and $\bsz = (z_1, \ldots, z_s)$ we have
\begin{align}\label{eq:good-z-c}
  \bssigma(\bsk')\cdot\bsz\not\equiv_n \bsk\cdot\bsz
  \qquad
  \text{for all } \bsk,\bsk' \in \Lambda_s \mbox{ and } \bssigma\in \calS_{\bsk'}
  \text{ with } \bsk \ne \bsk'
  .
  \end{align}
\end{theorem}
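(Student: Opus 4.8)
The plan is to prove Theorem~\ref{thm:cbc-plan-c} by induction on $s$, following the shape of the proof of Theorem~\ref{thm:cbc} but with a case analysis tailored to the self-aliasing that plan~C permits. Write $\bsk=(\bsell,k_s)$ with $\bsell\in\bbN_0^{s-1}$, and similarly $\bsk'=(\bsell',k_s')$ and $\bssigma=(\bstau,\sigma_s)$; the full-projection definition~\eqref{eq:set-s} gives $\bsell\in\Lambda_{s-1}$ whenever $(\bsell,k_s)\in\Lambda_s$, and $\calS_{\bsk'}$ factors so that $\bstau\in\calS_{\bsell'}$ (with $\sigma_s=+1$ forced when $k_s'=0$). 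Since sign changes act coordinatewise, mirroring commutes with truncation, so $\calM(\Lambda_s)=(\calM(\Lambda))_s$ and hence $\size{\Lambda_s}\,\size{\calM(\Lambda_s)}\le\size{\Lambda}\,\size{\calM(\Lambda)}$ for every $s$. For the base case $s=1$: for $k,k'\in\Lambda_1$ with $k\ne k'$ and $\sigma\in\calS_{k'}$ one has $\sigma k'\ne k$ (since $k,k'\ge0$, $\sigma k'=k$ would force $k=k'$) and $\abs{\sigma k'-k}\le 2\max(\Lambda)<n$, so $\sigma k'-k\not\equiv_n0$; as $n$ is prime, $(\sigma k'-k)z_1\not\equiv_n0$ for \emph{every} $z_1\in\ZZ^*_n$, so any choice (say $z_1=1$) works.

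For the inductive step, assume $\bsz=(z_1,\dots,z_{s-1})\in(\ZZ^*_n)^{s-1}$ already satisfies~\eqref{eq:good-z-c} for $\Lambda_{s-1}$. For a ``bad'' triple $\bsk,\bsk'\in\Lambda_s$ with $\bsk\ne\bsk'$ and $\bssigma\in\calS_{\bsk'}$, the forbidden relation $\bssigma(\bsk')\cdot(\bsz,z_s)\equiv_n\bsk\cdot(\bsz,z_s)$ rewrites as $(\sigma_s k_s'-k_s)\,z_s\equiv_n(\bsell-\bstau(\bsell'))\cdot\bsz$. Split according to whether $\sigma_s k_s'=k_s$, which by $n>2\max(\Lambda)$ coincides with $\sigma_s k_s'\equiv_n k_s$ and (since $k_s,k_s'\ge0$) forces $k_s=k_s'$. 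If $\sigma_s k_s'=k_s$ and also $\bsell=\bsell'$, then $\bsk=\bsk'$, contradicting $\bsk\ne\bsk'$, so this sub-case is vacuous; if $\sigma_s k_s'=k_s$ and $\bsell\ne\bsell'$, then $\bsell,\bsell'\in\Lambda_{s-1}$ and $\bstau\in\calS_{\bsell'}$, so the induction hypothesis gives $(\bsell-\bstau(\bsell'))\cdot\bsz\not\equiv_n0$ and the congruence for $z_s$ has no solution at all. Hence only triples with $\sigma_s k_s'\ne k_s$ can eliminate a value of $z_s$; for each such triple $\sigma_s k_s'-k_s$ is invertible modulo the prime $n$, so exactly one $z_s\in\ZZ_n$ is forbidden — at most one lying in $\ZZ^*_n$ — and this value depends only on the pair $(\bsk,\bsh)$, where $\bsh:=\bssigma(\bsk')\in\calM(\Lambda_s)$ (its last component $h_s=\sigma_s k_s'\ne k_s$).

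A counting bound then finishes the step: the number of forbidden $z_s$ is at most $\size{\{(\bsk,\bsh)\in\Lambda_s\times\calM(\Lambda_s):h_s\ne k_s\}}$, and since $\Lambda_s\subseteq\calM(\Lambda_s)$ the $\size{\Lambda_s}\ge1$ diagonal pairs $(\bsk,\bsk)$ all have $h_s=k_s$ and are excluded, so this number is at most $\size{\Lambda_s}\,\size{\calM(\Lambda_s)}-\size{\Lambda_s}\le\size{\Lambda}\,\size{\calM(\Lambda)}-1\le n-2<n-1=\size{\ZZ^*_n}$ by~\eqref{eq:cond-n-c}. Thus some good $z_s\in\ZZ^*_n$ always remains; adjoining it gives $(z_1,\dots,z_s)$ satisfying~\eqref{eq:good-z-c} for $\Lambda_s$, and iterating over $s=1,\dots,d$ produces the required $\bsz^*\in(\ZZ^*_n)^d$.

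The main obstacle — and the one place this proof genuinely departs from that of Theorem~\ref{thm:cbc} — is the treatment of the case $\sigma_s k_s'=k_s$: one must see that it either forces $\bsk=\bsk'$, which is precisely the single-index self-aliasing that plan~C tolerates (so it imposes no constraint), or else it reduces to a condition already ruled out by the induction hypothesis for $\Lambda_{s-1}$. This is exactly why plan~C succeeds under the weaker non-aliasing condition~\eqref{eq:plan-c-iff}. Two subsidiary points worth noting: unlike in Theorem~\ref{thm:cbc}, the underlying set of indices $\{\bssigma(\bsk')-\bsk\}$ is not centrally symmetric (because $\Lambda\subset\bbN_0^d$ is not), so there is no factor-of-$2$ improvement, which accounts for the bound $n>\size{\Lambda}\,\size{\calM(\Lambda)}$ rather than half of it; and the full projection~\eqref{eq:set-s} is essential, since the implication $(\bsell,k_s)\in\Lambda_s\Rightarrow\bsell\in\Lambda_{s-1}$ is what drives the induction and plan~C cannot be recast in the generic form~\eqref{eq:generic} that allowed the zero projection~\eqref{eq:set-s0} in Theorem~\ref{thm:cbc}.
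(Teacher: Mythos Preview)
Your proof is correct and follows essentially the same inductive strategy as the paper's: both reduce the bad condition to the congruence $(\sigma_s k_s'-k_s)\,z_s\equiv_n -(\bssigma(\bsell')-\bsell)\cdot\bsz$, use the induction hypothesis on $\Lambda_{s-1}$ (via the full projection) together with $n>2\max(\Lambda)$ to show that no $z_s$ is ever excluded in the degenerate cases, and then count at most $\size{\Lambda_s}\,(\size{\calM(\Lambda_s)}-1)$ bad values of $z_s$. The only difference is the order of the case split---the paper branches first on $\bsell=\bsell'$ versus $\bsell\ne\bsell'$, while you branch first on $\sigma_s k_s'=k_s$ versus $\sigma_s k_s'\ne k_s$ and then observe that the forbidden $z_s$ depends only on the pair $(\bsk,\bsh)$ with $\bsh=\bssigma(\bsk')\in\calM(\Lambda_s)$---but the resulting bound and the logic are identical.
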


\begin{proof}
The proof is by induction on $s$. For $s=1$ and $k_1\ne k_1'$, the
condition $\sigma_1(k_1') z_1\not\equiv_n k_1 z_1$ holds for all $z_1 \in
\ZZ^*_n$ if $\sigma_1(k_1')-k_1\not\equiv_n 0$, and fails for all $z_1$ if
$\sigma_1(k_1')-k_1\equiv_n 0$. To avoid the latter scenario we assume
that $n>2\max_{k_1\in\Lambda_1} |k_1|$.

Suppose we already obtained the generating vector $\bsz \in
(\ZZ^*_n)^{s-1}$ satisfying \eqref{eq:good-z-c} for $\Lambda_{s-1}$. For
each distinct pair $(\bsk, k_s),(\bsk', k_s') \in \Lambda_s$ and each
$(\bssigma,\sigma_s)\in \calS_{(\bsk',k_s')}$, we will eliminate any `bad'
$z_s\in\bbZ_n^*$ that satisfies
\begin{align}\label{eq:bad-z-c}
     (\bssigma(\bsk'), \sigma_s(k_s')) \cdot (\bsz, z_s)
     \equiv_n
     (\bsk, k_s) \cdot (\bsz, z_s)
     \quad\Leftrightarrow\quad
     (\sigma_s(k_s') - k_s) z_s
     \equiv_n - (\bssigma(\bsk')\cdot\bsz - \bsk\cdot\bsz).
\end{align}
{}From the definition \eqref{eq:set-s} we have $\bsk,\bsk' \in
\Lambda_{s-1}$. We have the following scenarios:
\begin{enumerate}
\item If $\bsk\ne\bsk'$ then the induction hypotheses
    \eqref{eq:good-z-c} for $\Lambda_{s-1}$ guarantees that
    $\bssigma(\bsk')\cdot\bsz \not\equiv_n \bsk\cdot\bsz$ for all
    $\bssigma$. Thus \eqref{eq:bad-z-c} has no solution for~$z_s$ if
    $\sigma_s(k_s')-k_s\equiv_n 0$, and \eqref{eq:bad-z-c} has a
    unique solution for $z_s$ if $\sigma_s(k_s')- k_s\not\equiv_n 0$.

\item If $\bsk=\bsk'$ (thus $k_s\ne k_s'$) and $\bssigma$ satisfies
    $\bssigma(\bsk')\cdot\bsz \not\equiv_n \bsk\cdot\bsz$ then, as in
    the previous scenario, \eqref{eq:bad-z-c} has no solution
    for~$z_s$ if $\sigma_s(k_s')-k_s\equiv_n 0$, and
    \eqref{eq:bad-z-c} has a unique solution for $z_s$ if
    $\sigma_s(k_s')- k_s\not\equiv_n 0$.

\item If $\bsk=\bsk'$ (thus $k_s\ne k_s'$) and $\bssigma$ satisfies
    $\bssigma(\bsk')\cdot\bsz \equiv_n \bsk\cdot\bsz$, then
    \eqref{eq:bad-z-c} has no solution for~$z_s$ if
    $\sigma_s(k_s')-k_s\not\equiv_n 0$, and \eqref{eq:bad-z-c} holds
    for all $z_s$ if $\sigma_s(k_s')- k_s\equiv_n 0$. To avoid the
    latter scenario we assume that $n > |\sigma_s(k_s')- k_s|$. Note
    that it is not possible to have $\sigma_s(k_s') = k_s$ when
    $k_s\ne k_s'$ since both $k_s$ and $k_s'$ are nonnegative
    integers.
\end{enumerate}
Thus, provided that $n > 2\max_{(\bsk,k_s)\in\Lambda_s} |k_s|$, there is
at most one bad $z_s$ to be eliminated for each distinct pair $(\bsk,
k_s),(\bsk', k_s') \in \Lambda_s$ and each $(\bssigma,\sigma_s)\in
\calS_{(\bsk',k_s')}$, so the total number of bad $z_s$ we eliminate is at
most $\size{\Lambda_s}\,(\size{\calM(\Lambda_s)} - 1)$.

Hence, provided additionally that $\size{\ZZ^*_n} = n-1 >
\size{\Lambda_s}\,(\size{\calM(\Lambda_s)} - 1)$, there is always a `good'
$z_s$ remaining such that $(\bsz,z_s)$ will satisfy~\eqref{eq:good-z-c}
for $\Lambda_s$. By induction, to ensure that a good
$\bsz^*\in(\bbZ_n^*)^d$ exists, it suffices to assume that $n$ satisfies
\eqref{eq:cond-n-c}. This completes the proof.
\end{proof}

\section{Approximation} \label{sec:app}

We now discuss function approximation for all three settings under a
unified framework. A major difference of this section compared to the
previous sections is that the function $f$ under consideration is no
longer supported only on a finite index set. We cannot achieve exact
function reconstruction and therefore an error analysis is needed.

\subsection{Function approximation under a unified framework}

We have an orthonormal basis $\{\alpha_\bsk\}$ for $L_\mu^2(\Omega)$,
where $\mu(\Omega) = 1$, and consider functions with absolutely converging
series expansions
\[
  f \,=\,
  \sum_\bsk \widehat{f}_\bsk \, \alpha_\bsk,
\]
where the sum is over $\bbZ^d$ (for Fourier space) or $\bbN_0^d$ (for
cosine and Chebyshev spaces). Now consider a subset $\Lambda$ of the
indices and represent the exact $L_\mu^2$ projection of $f$, i.e., the
best $L_\mu^2$ approximation on $\Lambda$, by
\[
  f_\Lambda
  \,=\,
  \sum_{\bsk \in \Lambda} \widehat{f}_\bsk \, \alpha_\bsk.
\]
We cannot calculate these coefficients $\widehat{f}_\bsk$ exactly and will
have to approximate them, leading to
\[
  f_\Lambda^{\square}
  \,=\,
  \sum_{\bsk \in \Lambda} \widehat{f}_\bsk^{\square} \, \alpha_\bsk,
\]
with $\square\in \{a,b,c\}$ denoting the approximation by plan A
(including the Fourier case), plan B, or plan C.

We write
\begin{align*}
  \varphi
  &=
  \begin{cases}
  {\rm id} & \hspace{1.78cm} \text{for Fourier space}, \\
  \tent & \hspace{1.78cm} \text{for cosine space plan A, B, C}, \\
  \cos(\pi\,\tent(\bigcdot)) & \hspace{1.78cm} \text{for Chebyshev space plan A, B, C},
  \end{cases}
  \\
  \alpha_\bsk \circ \varphi = \rmu_\bsk
  &=
  \begin{cases}
    \exp(2\pi\rmi \, \bsh \cdot \bsx) & \text{for Fourier space}, \\
    \sqrt{2}^{|\bsk|_0} \prod_{j=1}^d \cos(2\pi \, k_j x_j) & \text{for cosine/Chebyshev space plan A, B, C},
  \end{cases}
  \\
  \rmv_\bsk
  &=
  \begin{cases}
    \rmu_\bsk & \hspace{0.9cm} \text{for Fourier space, cosine/Chebyshev space plan A}, \\
    \sqrt{2}^{|\bsk|_0} \cos(2\pi \, \bsk\cdot\bsx) & \hspace{0.9cm} \text{for cosine/Chebyshev space plan B, C}.
  \end{cases}
\end{align*}
Then we have $\langle {\rm u}_\bsk,\rmu_{\bsk'}\rangle =
\delta_{\bsk,\bsk'}$, $\langle \rmu_\bsk,\rmv_{\bsk'}\rangle =
\delta_{\bsk,\bsk'}$, and $\langle \rmv_\bsk,\rmv_{\bsk'}\rangle =
d_\bsk\, \delta_{\bsk,\bsk'}$, with $d_\bsk = 2^{|\bsk|_0-1}$ for
$\bsk\ne\bszero$ in the case of cosine or Chebyshev space plan B or C and
$d_\bsk = 1$ otherwise.

We demand from our lattice rule that
\begin{align}
  Q_n(\rmu_\bsk \, \overline{\rmv_{\bsk'}}) \label{eq:Qalphabeta}
  &=
  c_\bsk \, \delta_{\bsk,\bsk'}
  \qquad
  \forall \bsk, \bsk' \in \Lambda,
  \\
  Q_n(\rmv_\bsk \, \overline{\rmv_{\bsk'}}) \label{eq:Qbeta}
  &=
  d_\bsk \, \delta_{\bsk,\bsk'}
  \qquad
  \forall \bsk, \bsk' \in \Lambda
  ,
\end{align}
where $c_\bsk$ in the case of cosine or Chebyshev space plan C (see
\eqref{eq:plan-c-ck} or \eqref{eq:che-plan-c-ck}) can be a positive
integer up to the number of unique sign changes of $\bsk$, i.e., $1\le
c_\bsk\le 2^{|\bsk|_0}$, and $c_\bsk = 1$ otherwise. Note that we do not
necessarily have $Q_n(\rmu_\bsk \, \overline{\rmu_{\bsk'}}) =
\delta_{\bsk,\bsk'}$ (except for when $\rmv_\bsk = \rmu_\bsk$).

With the above unifying notation, and with $w_i = 1/n$ and $\bst_i$ our
lattice points, we can write our approximate coefficient as
\begin{align}
  \label{eq:approx}
  \widehat{f}_\bsk^{\square}
  \,=\, Q_n(f\circ\varphi\, \overline{\rmv_\bsk})\,c_\bsk^{-1}
  \,=\,
  \sum_{i=0}^{n-1} w_i\,f(\varphi(\bst_i)) \, \overline{\rmv_\bsk(\bst_i)}\,c_\bsk^{-1}
  ,\qquad \bsk\in\Lambda\,.
\end{align}
In comparison, the exact coefficient is given by $\widehat{f}_\bsk =
  \langle f , \alpha_\bsk \rangle_\mu
  =
  \langle f\circ\varphi, \rmu_\bsk \rangle
  =
  \langle f\circ\varphi, \rmv_\bsk \rangle$.

\subsection{Connection to discrete least squares}

With a prescribed ordering of the elements in $\Lambda$, the approximate
coefficients \eqref{eq:approx} for $\bsk\in\Lambda$ can be written in
matrix-vector notation as
\[ 
  \widehat{\bsf}^\square
  \,=\,
  C^{-1} \, V^* \, W \, \bsf_\varphi
  ,
\] 
with column vectors $\bsf_\varphi = [f(\varphi(\bst_i))]_i$,
$\widehat{\bsf}^\square = [\widehat{f}_\bsk^{\square}]_{\bsk}$, and
matrices $V = [\rmv_\bsk(\bst_i)]_{i,\bsk}$, $C = {\rm diag}(c_\bsk)$, and
$W = {\rm diag}(w_i)$. The conditions \eqref{eq:Qalphabeta} and
\eqref{eq:Qbeta} can be expressed as
\[
  V^* \, W \, U = C
  \qquad\mbox{and}\qquad
  V^* \, W \, V = D,
\]
with matrices $U = [\rmu_\bsk(\bst_i)]_{i,\bsk}$ and $D = {\rm
diag}(d_\bsk)$.

For plan A (or for the Fourier case) we have $U=V$ and $C = D = I$, so
\begin{align*}
    \widehat{\bsf}^a
  &=
  U^* \, W \, \bsf_\varphi
 ,
\end{align*}
which is precisely the solution to the normal equations
\[
  (W^{1/2} \, U)^* \, W^{1/2} \, U \, \widehat{\bsf}^a
  =
  (W^{1/2} \, U)^* \, W^{1/2} \, \bsf_\varphi
  \Leftrightarrow
  (U^* \, W \, U) \, \widehat{\bsf}^a
  =
  U^* \, W \, \bsf_\varphi
  \Leftrightarrow
  \widehat{\bsf}^a
  =
  U^* \, W \, \bsf_\varphi
  ,
\]
which in turn solves the discrete least-squares problem
\[
  \min_{\widehat{\bsf}^a} \| W^{1/2} \, U \, \widehat{\bsf}^a  - W^{1/2} \, \bsf_\varphi \|^2_2.
\]

Plan B and plan C do \emph{not} have the least-squares interpretation.

\subsection{Stability to perturbation}

Suppose that there is perturbation error in the function evaluations in
\eqref{eq:approx} so that instead of $f(\varphi(\bst_i))$ we have
\[
   f_{\rm pert}(\varphi(\bst_i)) \,=\, f(\varphi(\bst_i)) + \eps_i,
   \qquad i=0,\ldots,n-1.
\]
We denote the corresponding perturbed approximate coefficients by
$\widehat{f}_{\bsk,{\rm pert}}^{\square}$ and the corresponding
approximate function over $\Lambda$ by $f_{\Lambda,{\rm pert}}^{\square}$
for $\square\in \{a,b,c\}$. Using \eqref{eq:approx}, we can write
\begin{align*}
  \|f_{\Lambda,{\rm pert}}^\square - f_\Lambda^\square\|^2_{L^2_\mu}
  &\,=\,
  \sum_{\bsk \in \Lambda} |\widehat{f}_{\bsk,{\rm pert}}^\square - \widehat{f}_\bsk^\square|^2 \\
  &\,=\,
  \sum_{\bsk \in \Lambda} \left| \sum_{i=0}^{n-1} w_i\, (f_{\rm pert} - f)(\varphi(\bst_i)) \,
  \overline{\rmv_\bsk(\bst_i)}\, c_\bsk^{-1} \right|^2
  \,=\, \|C^{-1} V^* W \bseps\|_2^2,
\end{align*}
with column vector $\bseps = [\eps_i]_i$. We have
\begin{align*}
  \|C^{-1} V^* W \bseps\|_2^2
  \,=\,
  \|C^{-1} \, D^{1/2} \, ( D^{-1/2} \, V^* \, W^{1/2} ) \, W^{1/2} \, \bseps\|_2^2
  ,
\end{align*}
where
\[
  ( D^{-1/2} \, V^* \, W^{1/2} ) \, ( W^{1/2} \, V \, D^{-1/2} )
  \,=\, D^{-1/2} \, V^* \, W \, V \, D^{-1/2} = I
  ,
\]
so that $\|D^{-1/2} \, V^* \, W^{1/2}\|_2 = 1$. Thus
\begin{align*}
    \|f_{\Lambda,{\rm pert}}^\square - f_\Lambda^\square\|^2_{L^2_\mu}
  \,=\, \|C^{-1} V^* W \bseps\|_2^2
  \,\le\,
  \|C^{-1} \, D^{1/2}\|_2^2 \, \|W^{1/2}\,\bseps\|_2^2
  &\,=\, \bigg(\underbrace{\max_{\bsk\in\Lambda} \frac{d_\bsk}{c_\bsk^2}}_{=:\,\rho_\Lambda^{\square}}\bigg)
  \bigg(
  \frac{1}{n} \sum_{i=0}^{n-1} |\eps_i|^2
  \bigg).
\end{align*}
Here $\rho_\Lambda^{\square}$ is the \emph{stability constant}, and we
have
\begin{equation} \label{eq:stab}
  \rho_\Lambda^{a} \,=\, 1,
  \quad
 \rho_\Lambda^{b} \,=\, \max\left(1_{\bszero\in\Lambda},\max_{\bsk\in\Lambda\setminus\{\bszero\}} 2^{|\bsk|_0-1}\right),
  \quad
 \rho_\Lambda^{c} \,=\, \max\left(1_{\bszero\in\Lambda},\max_{\bsk\in\Lambda\setminus\{\bszero\}} \frac{2^{|\bsk|_0-1}}{c_\bsk^2}\right),
\end{equation}
where $1_{\bszero\in\Lambda}$ is 1 if $\bszero\in\Lambda$ and is $0$
otherwise.

For plan~A the approximation is perfectly stable.

For plan~B and plan~C we have the general upper bound $\rho_\Lambda^{c}\le
\rho_\Lambda^{b}\le 2^{d-1}$ which might be too pessimistic. If we have a
weighted index set with decaying weights (see Example~\ref{example})
$\rho_\Lambda^{b}$ could be much smaller. Alternatively, if $\Lambda$ is
downward closed then by Lemma~\ref{lem:downward} we have
$\rho_\Lambda^{c}\le\rho_\Lambda^{b}\le \size{\Lambda}$. In any case, the
stability constant for plan~B is likely to be much bigger than $1$ even if
it is independent of $d$.

For plan~C the values of $c_\bsk$ depend on the lattice rule and can
potentially be as large as $2^{|\bsk|_0}$, giving hope that one may
attempt to minimize the stability constant $\rho_\Lambda^{c}$ as part of
the CBC construction of the lattice generating vector. Unfortunately,
numerical experiments show that not much improvement can be obtained
because ``self-aliasing'' does not happen often enough.

\subsection{Error analysis}

We have for our three plans A (including the Fourier space), B, and C,
annotated by $\square\in\{a,b,c\}$,
\begin{align*}
  \|f - f_\Lambda^\square\|^2_{L^2_\mu}
  =
  \underbrace{\|f - f_\Lambda\|^2_{L^2_\mu}}_{\text{truncation error}}
  +
  \underbrace{\|f_\Lambda - f_\Lambda^\square\|^2_{L^2_\mu}}_{\text{approximation error}}
  .
\end{align*}

The first part is the truncation error for the finite index set $\Lambda$,
and thus represents the best $L_\mu^2$ approximation error for the choice
of $\Lambda$. In the \emph{Information Based Complexity} (IBC) error
analysis, this would be a complexity result using \emph{arbitrary linear
information}: if we know more about the smoothness class of our functions,
then this bound is known in terms of a set $\Lambda$ which is constructed
according to the decay of the singular values of the approximation
operator and this error is exactly the next singular value, see, e.g.,
\cite{WW99}. The second part is how well we approximate this best possible
approximation by our numerical algorithm which only uses function values;
in IBC this is known as \emph{standard information}, see, e.g.,
\cite{WW01}.

We proceed to analyze the second error $\|f_\Lambda -
f_\Lambda^\square\|^2_{L^2_\mu}$. Since $f_\Lambda$ is supported only on
$\Lambda$, our reconstruction lattice can exactly compute its coefficients
on $\Lambda$. Thus for $\bsk\in\Lambda$ we have
\begin{align}
  \label{eq:exact}
  \widehat{f}_\bsk
  \,=\, \widehat{(f_\Lambda)}_\bsk
  \,=\, Q_n(f_\Lambda\circ\varphi\, \overline{\rmv_\bsk})\,c_\bsk^{-1}
  \,=\,
  \sum_{i=0}^{n-1} w_i\,f_\Lambda(\varphi(\bst_i)) \, \overline{\rmv_\bsk(\bst_i)}\,c_\bsk^{-1}
  .
\end{align}
Using \eqref{eq:exact} and \eqref{eq:approx} and following the same
argument as for the stability analysis, we obtain
\begin{align*}
  \|f_\Lambda - f_\Lambda^\square\|^2_{L^2_\mu}
  \,=\,
  \sum_{\bsk \in \Lambda} |\widehat{f}_\bsk - \widehat{f}_\bsk^\square|^2
  &\,=\,
  \sum_{\bsk \in \Lambda} \left| \sum_{i=0}^{n-1} w_i\, (f_\Lambda - f)(\varphi(\bst_i)) \, \overline{v_\bsk(\bst_i)}\, c_\bsk^{-1} \right|^2
  \,=\, \|C^{-1} V^* W \bsg\|_2^2,
\end{align*}
with column vector $\bsg = [(f_\Lambda - f)(\varphi(\bst_i))]_i$, and we
arrive at
\begin{align} \label{eq:d-norm}
  \|f_\Lambda - f_\Lambda^\square\|^2_{L^2_\mu}
  &\,\le\, \rho_\Lambda^{\square}\, \|f - f_\Lambda\|_n^2,
  \qquad\mbox{where}\quad
  \|h\|_n^2 \,:=\, \frac{1}{n} \sum_{i=0}^{n-1} |h(\varphi(\bst_i))|^2 .
\end{align}

We summarize our combined result for function approximation in the
following theorem.

\begin{theorem} \label{thm:final}
Consider the problem of approximating a function $f\in\calF$ from the
Fourier, cosine, or Chebyshev space by $f^{\square}_\Lambda$, $\square\in
\{a,b,c\}$, using a finite index set $\Lambda$ and an $n$-point rank-$1$
lattice under plans A, B, or C as described in this paper. For
sufficiently large $n$ we have
\begin{align} \label{eq:err-good}
  \|f - f_\Lambda^\square\|^2_{L^2_\mu}
  &\,\le\,
  \|f - f_\Lambda\|^2_{L^2_\mu} + \rho_\Lambda^{\square}\, \|f - f_\Lambda\|_n^2,
\end{align}
with stability constant $\rho_\Lambda^{\square}$ given in \eqref{eq:stab}
and discrete seminorm $\|\cdot\|_n$ defined in \eqref{eq:d-norm}. We have
$\rho_\Lambda^{a} = 1$, and if $\Lambda$ is downward closed then
$\rho_\Lambda^{c} \le \rho_\Lambda^{b} \le \min(2^{d-1},\size\Lambda)$. A
loose upper bound 
is
\begin{align} \label{eq:err-bad}
  \|f - f_\Lambda^\square\|_{L^2_\mu}
  \,\le\,
  \sqrt{1 + \rho_\Lambda^{\square}}\; \|f - f_\Lambda\|_{L^\infty}.
\end{align}
The requirement on $n$ to achieve \eqref{eq:err-good} is proportional to
$\sizep{\Lambda\ominus\Lambda}$ for the Fourier case, while for the
cosine/Chebyshev case it is $\sizep{\calM(\Lambda)\oplus\calM(\Lambda)}$
with plan A, $\sizep{\Lambda\oplus\calM(\Lambda)}$ with plan B, and
$\size{\Lambda}\,\size{\calM(\Lambda)}$ with plan~C.
\end{theorem}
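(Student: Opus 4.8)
The plan is to assemble results already established in Sections~\ref{sec:fou}--\ref{sec:app}; the theorem is a consolidation rather than a genuinely new argument. First I would record the orthogonal splitting
\[
  \|f - f_\Lambda^\square\|^2_{L^2_\mu}
  \,=\, \|f - f_\Lambda\|^2_{L^2_\mu} + \|f_\Lambda - f_\Lambda^\square\|^2_{L^2_\mu},
\]
which is just Pythagoras in $L^2_\mu$: the truncation residual $f - f_\Lambda$ lies in the span of the $\alpha_\bsk$ with $\bsk\notin\Lambda$, while $f_\Lambda - f_\Lambda^\square$ lies in the span of those with $\bsk\in\Lambda$, so the two are orthogonal. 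This is exactly the truncation-plus-approximation decomposition displayed just before the theorem.

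Next I would bound the approximation term. The key input is that, once $n$ is large enough for the relevant reconstruction lemma to apply, the lattice rule reconstructs the coefficients of the finitely supported function $f_\Lambda$ exactly: this is identity~\eqref{eq:exact}, which is where the whole machinery of the earlier sections (character property, tent and cosine transforms, CBC construction) is actually invoked. The appropriate lemma is Lemma~\ref{lem:fou2} for the Fourier case, Lemmas~\ref{lem:cos2}, \ref{lem:plan-b}, \ref{lem:plan-c} for the cosine case, and Lemmas~\ref{lem:che2}, \ref{lem:che-plan-b}, \ref{lem:che-plan-c} for the Chebyshev case. Subtracting~\eqref{eq:exact} from the definition~\eqref{eq:approx} of $\widehat f_\bsk^\square$ rewrites the coefficient error in terms of the sampled residual $(f_\Lambda-f)(\varphi(\bst_i))$, and the matrix-norm computation already carried out for the stability analysis (using $\|D^{-1/2}V^*W^{1/2}\|_2 = 1$) yields $\|f_\Lambda - f_\Lambda^\square\|^2_{L^2_\mu} \le \rho_\Lambda^\square\,\|f - f_\Lambda\|_n^2$, i.e.\ \eqref{eq:d-norm}. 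Combined with the splitting this gives~\eqref{eq:err-good}.

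The statements about $\rho_\Lambda^\square$ then follow from~\eqref{eq:stab}: $\rho_\Lambda^a = 1$ since $C=D=I$ for plan~A; for plan~B, $\rho_\Lambda^b = \max(1_{\bszero\in\Lambda},\max_{\bsk\ne\bszero}2^{|\bsk|_0-1}) \le 2^{d-1}$ from $|\bsk|_0\le d$, while if $\Lambda$ is downward closed then Lemma~\ref{lem:downward} gives $2^{|\bsk|_0}\le\size\Lambda$, hence $\rho_\Lambda^b\le\size\Lambda$; and $c_\bsk\ge 1$ forces $\rho_\Lambda^c\le\rho_\Lambda^b$. For the loose bound~\eqref{eq:err-bad} I would bound each term on the right of~\eqref{eq:err-good} by $\|f-f_\Lambda\|_{L^\infty}^2$ — the $L^2_\mu$ term because $\mu$ is a probability measure, the discrete seminorm $\|\cdot\|_n$ because it is an average of squared pointwise values — giving $\|f-f_\Lambda^\square\|^2_{L^2_\mu}\le(1+\rho_\Lambda^\square)\|f-f_\Lambda\|_{L^\infty}^2$, and then take square roots. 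Finally, the stated requirements on $n$ are read straight off the reconstruction lemmas cited above, keeping only the dominant term ($\sizep{\Lambda\ominus\Lambda}$ for Fourier, $\sizep{\calM(\Lambda)\oplus\calM(\Lambda)}$ for plan~A, $\sizep{\Lambda\oplus\calM(\Lambda)}$ for plan~B, $\size\Lambda\,\size{\calM(\Lambda)}$ for plan~C); the side condition $n > 2\max(\Lambda)$ that prevents the modular arithmetic from wrapping around is of lower order and I would only note it in passing.

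Since every ingredient is already in hand, there is no real obstacle — the only care required is bookkeeping across the seven cases (Fourier; cosine A/B/C; Chebyshev A/B/C) to cite the correct reconstruction lemma and the correct value of $\rho_\Lambda^\square$ in each. If forced to name the one place where something substantive happens, it is the exact reconstruction identity~\eqref{eq:exact} for $f_\Lambda$, but that identity has itself already been derived in the text immediately preceding the theorem.
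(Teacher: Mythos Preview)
Your proposal is correct and follows essentially the same route as the paper: the theorem is indeed a summary statement, and the paper's ``proof'' is precisely the derivation laid out in Subsections~6.1--6.4 (orthogonal splitting, exact reconstruction via~\eqref{eq:exact}, the matrix-norm estimate reused from the stability analysis to get~\eqref{eq:d-norm}, then the $L^\infty$ bound and the citations of the reconstruction lemmas for the size of~$n$). Your bookkeeping across the cases and your handling of the $\rho_\Lambda^\square$ bounds via~\eqref{eq:stab} and Lemma~\ref{lem:downward} match the paper's text exactly.
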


\subsection{Comparison with previous results from the literature and conclusions}

Approximation by discrete least-squares has been analysed for different
measures $\mu$ and bases $\{ \alpha_{\bsk}\}$ in several works. We mention
for instance \cite{CCMNT15,CDL13,MNST14} when using evaluations at random
points, and in \cite{MN15,NXZ14} when random points are replaced by
deterministic point sets. A common denominator in all the aforementioned
analyses is the equivalence of the norm $\| \cdot \|_{L^2_\mu}$ and a
suitably defined discrete seminorm $\|\cdot\|_n$, on the
finite-dimensional space $\calF_\Lambda$. More precisely, there exists
$\delta\in [0,1)$ such that, under appropriate conditions on $n$,
$\#\Lambda$ and $\delta$, it holds that
\begin{equation}
(1-\delta)
\,
\|f\|^2_{L^2_\mu}
\leq
\|f\|_n^2
\leq
(1+\delta)
\,
\|f\|^2_{L^2_\mu},
\qquad\mbox{for all}\quad f\in \calF_\Lambda.
\label{eq:norm_equivalence}
\end{equation}
Under the same conditions between $n$, $\#\Lambda$ and $\delta$ that
ensure \eqref{eq:norm_equivalence}, the discrete least-squares
approximation $\Pi_n f$ of any $f\in\calF$ satisfies
\begin{align} \label{eq:lsq}
  \|f - \Pi_n f\|_{L^2_\mu}
  \,\le\, \sqrt{1 + \frac{1}{1-\delta}}\; \inf_{v\in\calF_\Lambda} \|f - v\|_{L^\infty},
\end{align}
see \cite[Proposition 1]{MNST14} for a proof.

Our results for the Fourier space and for plan A of the cosine and
Chebyshev spaces achieve exactly $\delta = 0$; see also \eqref{eq:err-bad}
with $\rho_\Lambda^a = 1$. For the Fourier case we obtain essentially a
scaling of $n\ge (\size{\Lambda})^2$. For the cosine and Chebyshev spaces
we obtain essentially $n\ge 4^d\,(\size{\Lambda})^2$ in general, and $n\ge
\min(4^d\,(\size{\Lambda})^2,(\size{\Lambda})^{2\ln 3/\ln 2})$ for
downward closed index sets. However, if the mirrored index set is a
weighted hyperbolic cross with sufficiently fast decaying weights (see
Example~\ref{example}), then we obtain essentially $n\ge
c_\tau\,(\size{\Lambda})^{2\tau}$ for $\tau>1$ arbitrarily close to $1$.

With the Chebyshev space and for any downward closed set $\Lambda$, these
results improve on \cite{NXZ14} where it is proven that
\eqref{eq:norm_equivalence} holds true if $n \geq 2^{2d+1} d^2
(\#\Lambda)^2$.

Moreover, we mention that in the case of uniform measure $\mu$ and
expansion on the Legendre basis, the results in \cite{MN15} show a scaling
of $n/ (\ln n)^d$ as $(\#\Lambda)^4$ for general downward closed sets, and
a scaling of $n/ (\ln n)^d$ as $(\#\Lambda)^2$ when $\calF_\Lambda$ is an
anisotropic tensor product space.

Our results for plans B and C in the cosine and Chebyshev spaces do not
have the discrete least-squares interpretation. All three plans give exact
function reconstruction in $\calF_\Lambda$, but for a general $f\in\calF$
not finitely supported on $\Lambda$, there is a trade-off between the
approximation error and the requirement on $n$ (e.g., plan $A$ requires
$n$ to be larger but also has the smallest constant~$\rho_\Lambda^a$).
Therefore it is not easy to directly compare them without further
analysis.

To proceed further from the very general result in
Theorem~\ref{thm:final}, one would need to make further assumptions on,
for example, the smoothness properties of the function space, and the
knowledge of a corresponding index set that has been chosen to take
advantage of such properties. Starting from the loose upper bound
\eqref{eq:err-bad}, if we know that the best $L^\infty$ approximation
error satisfies $\|f - f_\Lambda\|_{L^\infty} \le c_q\,
(\size\Lambda)^{-q}$ for some $q>0$, see, e.g., \cite{KWW08}, then we
arrive at $\|f - f_\Lambda^\square\|_{L^2_\mu}
  \,\le\,
  \sqrt{1 + \rho_\Lambda^{\square}}\; c_q\,(\size\Lambda)^{-q}$.
For the Fourier space we have $\rho_\Lambda^a=1$ and $n$ needs to be
proportional to $\sizep{\Lambda\ominus\Lambda}\le (\size{\Lambda})^2$,
leading to $\|f - f_\Lambda^a\|_{L^2_\mu} = \calO(n^{-q/2})$, where the
implied constant is independent of $d$ if $c_q$ is independent of $d$. For
the cosine or Chebyshev space, the result is more complicated because it
depends on the size of the mirrored index set. If we have a weighted
hyperbolic cross with sufficiently decaying weights (see
Example~\ref{example}) then the mirrored set itself is not of concern.
However, for plan B or C we need to further take into account the value of
$\rho_\Lambda^{b}$ or $\rho_\Lambda^{c}$. In general $\rho_\Lambda^{b}$
and $\rho_\Lambda^{c}$ can be much worse than $\rho_\Lambda^a=1$, but
depending on the actual index set they might also be manageable.

Finally we stress that the $L^2$~approximation result based on \emph{the
estimate \eqref{eq:err-bad} is not sharp, and neither is \eqref{eq:lsq}},
because the best $L^2$~approximation error $\|f - f_\Lambda\|_{L^2_\mu}$,
i.e., the first term on the right-hand side of \eqref{eq:err-good}, has
been estimated by the best $L^\infty$~approximation error $\|f -
f_\Lambda\|_{L^\infty}$, which is generally half an order worse in the
convergence rate (e.g., rate $p$ for $L^2$ versus rate $p-1/2=q$ for
$L^\infty$), see, e.g., \cite{KWW09a}. Moreover, a direct analysis on the
discrete norm $\|f - f_\Lambda\|_n$, i.e., the second term on the
right-hand side of \eqref{eq:err-good}, based on properties of the lattice
points, has a chance to improve upon the best $L^\infty$~approximation
error too. Indeed, function approximation based on rank-$1$ lattices has
been analyzed in \cite{CKNS16,KSW06,KSW08,KWW09c} where the lattice
generating vectors were constructed to minimize the approximation error
directly, without the reconstruction property. It is known that if $p$ is
the rate of convergence for the best $L^2$~approximation error (rather
than $L^\infty$) then lattice generating vectors can be constructed to
achieve $\|f - f_\Lambda^a\|_{L^2_\mu} = \calO(n^{-p/2})$. There are also
other approximation results using rank-$1$ or multiple rank-$1$
lattices, see, e.g., \cite{BKUV17,KDV15,KKP12,KV19,LH03,ZLH06,ZKH09}.

Rank-$1$ lattices are very attractive due to their simplicity and
stability, and the availability of fast computation methods compared to
other approximation algorithms.


\end{document}